\newtheorem{df}{Definition}[section]
\newtheorem{lm}[df]{Lemma}
\newtheorem{pr}[df]{Proposition}
\newtheorem{Th}[df]{Theorem}
\newtheorem{co}[df]{Corollary}
\newtheorem{rem}[df]{Remark}
\newcommand{\II}{\widehat{\mathcal{I}}}
\newcommand{\R}[1]{\mathbb{R}^{#1}}
\newcommand{\ttt}{T_c^-}
\newcommand{\tT}{T_{\tilde{c}}^-}
\newcommand{\tTT}{T_{\cc_{\tilde{\om}}}^-}
\newcommand{\tttt}{\widehat{T}_c^-}
\newcommand{\TTT}{T_c^+}
\newcommand{\<}{\prec}
\newcommand{\MM}{\widetilde{\mathcal{M}}}
\newcommand{\I}{\mathcal{I}}
\newcommand{\A}{\mathcal{A}}
\renewcommand{\AA}{\widetilde{\mathcal{A}}}
\newcommand{\HH}{\mathcal{H}}
\newcommand{\HHH}{\widehat{\mathcal{H}}}
\newcommand{\hh}[1]{\mathcal{H}(#1)\cap C^0(M,\R{})}
\newcommand{\om}{\omega}
\newcommand{\MZ}{M^\mathbb{Z}}
\renewcommand{\MM}{\widetilde{M}}
\newcommand{\cc}{\tilde{c}}
\newcommand{\xx}{\tilde{x}}
\newcommand{\yy}{\tilde{y}}
\newcommand{\KK}{\widetilde{K}}
\newcommand{\dd}{\tilde{\d}}
\renewcommand{\d}{\operatorname{d}}
\newcommand{\Hom}{\operatorname{Hom}}
\newcommand{\Homt}{\operatorname{Hom_{tame}}}
\newcommand{\conv}{\operatorname{conv}}
\author{M. ZAVIDOVIQUE}
\title{Existence of $C^{1,1}$ critical subsolutions in discrete  weak KAM theory}
\begin{document}

\maketitle

\begin{abstract}
In this article, following \cite{Za}, we study critical subsolutions in discrete weak KAM theory. In particular, we establish that if the cost function $c:M \times M\to \R{}$ defined on a smooth connected manifold is locally semi-concave and verifies twist conditions, then there exists a $C^{1,1}$ critical subsolution strict on a maximal set (namely, outside of the Aubry set). We also  explain how this applies to costs coming from Tonelli Lagrangians. Finally, following ideas introduced in \cite{FaMa} and \cite{Ma2}, we study invariant cost functions and apply this study to certain covering spaces, introducing a discrete analogue of Mather's $\alpha$ function on the cohomology.
\end{abstract}

\section*{Introduction}
In the past twenty years, new techniques have been developed in order to study time-periodic or autonomous Lagrangian dynamical systems. Among them, Aubry-Mather theory (for an introduction see \cite{Ba} for the annulus case and \cite{Mat}, \cite{Ma} for the compact, time periodic case) and Albert Fathi's weak KAM theory (see \cite{Fa} for the compact case and \cite{FaMa} for the non-compact case) have appeared to be very fruitful. More recently, a discretization of weak KAM theory applied to optimal transportation has allowed to obtain deep results of existence of optimal transport maps (see for example \cite{Be},\cite{BeBu}, \cite{BeBu1},\cite{fatfig07}). A quite similar formalism was also used in the study of time periodic Lagrangians, for example in (\cite{cis} or \cite{mas}).\\
In \cite{Za}, our goal was to study critical subsolutions and their discontinuities in a broad setting. Here, following \cite{FaSi}, and \cite{Be1} we will study the existence of more regular strict subsolutions. More precisely, we start with  a connected $C^\infty$ complete Riemmanian manifold $M$ endowed with the distance $\d(.,.)$ coming from the Riemmanian metric.
Let $c:M \times M\rightarrow \R{}$ be a locally semi-concave cost function (in other terms, in small enough charts, $c$ is the sum of a smooth and a concave function) which verifies: 
\begin{enumerate}
 \item\label{unif} \textbf{Uniform super-linearity}: for every $k\geqslant 0$,
   there exists $C(k)\in \R{}$ such that 
   $$\forall (x,y)\in M \times M,\ 
   c(x,y)\geqslant k\d(x,y)-C(k);$$
\item \label{unifb} \textbf{Uniform boundedness}: for every $R\in \R{}$, there
  exists $A(R)\in \R{}$ such that 
 $$\forall (x,y)\in M\times M, \ \d(x,y)\leqslant R \Rightarrow
  c(x,y)\leqslant A(R).$$
\end{enumerate}
A function $u$ is an $\alpha$-subsolution for $c$ if
\begin{equation}\label{sub}
\forall (x,y)\in M \times M,\ u(y)-u(x)\leqslant c(x,y)+\alpha.
\end{equation}
The critical constant $\alpha[0]$ is the smallest constant $\alpha$ such that there exist $\alpha$-subsolutions (see \cite{Za}). We will moreover suppose that $c$ verifies left and right twist conditions (defined in section \ref{2}).
\\
Under these hypothesis, we prove the following theorem:
\begin{Th}\label{Strict}
 There is a $C^{1,1}$ function $u_1:M\rightarrow \R{}$ which is an $\alpha[0]$-subsolution such that for every $(x,y)\in M \times M$, if there exists an $\alpha[0]$-subsolution, $u$ such that
$$u(y)-u(x)<c(x,y)+\alpha[0],$$
then we also  have
$$u_1(y)-u_1(x)<c(x,y)+\alpha[0].$$
\end{Th}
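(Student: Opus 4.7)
The plan is to follow the two-stage strategy of Fathi--Siconolfi \cite{FaSi} and Bernard \cite{Be1}, transposed to the discrete setting: first produce a merely continuous critical subsolution which is strict at every pair of points where some strict subsolution exists, then regularize it via the Lax--Oleinik operators $\ttt$ and $\TTT$, and finally extract $C^{1,1}$ regularity from the semi-concavity of $c$ combined with the twist conditions.

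For the first stage, observe that the set
\[
U:=\{(x,y)\in M\times M\mid \exists\, u\ \alpha[0]\text{-subsolution with }u(y)-u(x)<c(x,y)+\alpha[0]\}
\]
is an open subset of $M\times M$, and can be exhausted by countably many relatively compact pieces $K_n$, each of which carries an $\alpha[0]$-subsolution $u_n$ strict on $K_n$. Using the uniform Lipschitz bound on critical subsolutions coming from uniform super-linearity and boundedness of $c$ (as developed in \cite{Za}), a suitably normalized convex combination $u_0=\sum_n\lambda_n u_n$ with positive weights summing to $1$ produces a continuous $\alpha[0]$-subsolution which is strict at every point of $U$.

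The regularization stage applies the two Lax--Oleinik operators in succession. Set $v:=\ttt u_0$ and $u_1:=\TTT v$, each shifted by an additive $\alpha[0]$ so as to preserve the subsolution property; then $u_0\leq v\leq u_1$ and $u_1$ is again an $\alpha[0]$-subsolution. The semi-concavity of $c$ in its second variable passes to $v$ through the infimum defining $\ttt$, while semi-concavity of $c$ in its first variable makes $u_1$ locally semi-convex. Under the left and right twist conditions, the minimizer in $v(y)$ and the maximizer in $u_1(x)$ are unique and vary locally Lipschitz in their base points, so the super- and sub-differentials of $u_1$ reduce to a single gradient at each point. Combining matching semi-concavity, semi-convexity and Lipschitz dependence yields $u_1\in C^{1,1}(M,\R{})$.

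The main obstacle, and the true technical heart of the argument, is the preservation of strictness from $u_0$ to $u_1$. Each operator $\ttt$ and $\TTT$ can introduce equality at pairs that lie on calibrated chains of the current subsolution, so one must show that the pairs at which the composition $\TTT\ttt$ simultaneously calibrates correspond exactly to pairs on which no subsolution can be strict, i.e.\ to the projected Aubry set. Concretely, an equality $u_1(y)-u_1(x)=c(x,y)+\alpha[0]$ for $(x,y)\in U$ can be traced back through the definitions of $\ttt$ and $\TTT$ to intermediate points $x'$, $y'$ along which $u_0$ is forced to be calibrated, and the twist conditions then compel the resulting chain to collapse to a configuration contradicting the strictness of $u_0$ at $(x,y)$. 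It is this combinatorial/variational step, rather than the regularity estimate, where the twist hypothesis does its essential work.
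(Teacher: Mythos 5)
Your first stage (constructing a continuous critical subsolution strict on the open set $U$ by an exhaustion and a convex-combination argument) matches the paper's use of Theorem~\ref{aubrymax}, so that part is fine. The gap is in the regularization stage. You claim that applying $\ttt$ and then $\TTT$ to $u_0$ (suitably shifted) produces a function that is simultaneously locally semi-concave and locally semi-convex, and that the twist conditions make this composition $C^{1,1}$. This is not true. Applying $\ttt$ to any function yields a locally semi-concave function, and applying $\TTT$ subsequently yields a locally semi-convex function; the second operator does not preserve the semi-concavity produced by the first. The twist conditions give uniqueness of calibrating points and differentiability of subsolutions along the projected Aubry set, but they do not upgrade the one-sided regularity of $\TTT\ttt u_0$ to two-sided $C^{1,1}$ regularity, and the minimizers in the Lax--Oleinik infima need not vary Lipschitz-continuously. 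Indeed the paper itself flags exactly this point (``some difficulty arise'') as the place where the naive Lasry--Lions analogy breaks down in the discrete setting.

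What the paper actually does is keep the two one-sided regularizations separate: from $u_0$ strict outside $\widehat{\A}$ it produces a locally semi-convex strict subsolution $g=\TTT u_0-\alpha[0]$ and a locally semi-concave strict subsolution $f=\ttt\TTT u_0$ with $g\leqslant f$, and then invokes Ilmanen's insertion lemma (Theorem~\ref{inser}, the result of \cite{Beil,fz}) to produce a $C^{1,1}$ function $h$ with $g\leqslant h\leqslant f$ and the additional contact property that $h(x)=g(x)$ forces $f(x)=g(x)$. The unnumbered sandwiching lemma preceding Theorem~\ref{strici} then shows $h$ is a critical subsolution strict outside $\widehat{\A}$. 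Ilmanen's lemma is the technical heart of the argument and is precisely what your proposal omits; without it (or some substitute) the jump from two one-sided regular functions to a $C^{1,1}$ subsolution does not go through. Your closing paragraph on preservation of strictness also needs to be replaced by the explicit sandwiching lemma; as written it gestures at the right conclusion but does not actually use the contact property of the inserted function, which is what the argument hinges on.
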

The proof is done, as in \cite{Be1}, using some kind of Lasry-Lions regularization combined with a version of Ilmanen's insertion lemma (proved in \cite{Beil,fz}). Let us mention that the same example as the one given in \cite{Be1} shows that in general, this is the best regularity one can expect.

This paper is organized as follows:
\begin{itemize}
\item the first two sections, \ref{known} and \ref{2}, are devoted to recalling some results proved in \cite{Za} and to introducing the notion of twist condition,
\item in the third section, \ref{example}, we study the particular case of cost coming from Tonelli Lagrangians and we prove that they fit into our framework,
\item in section \ref{existence} we prove the main theorem (\ref{Strict}),
\item finally in section \ref{section5} we study, following ideas of \cite{FaMa} the case of invariant cost functions and we apply this study in section \ref{section6} to symmetries coming from deck transformations of a cover. Finally, following ideas of Mather (\cite{Ma2}), we introduce Mather's $\alpha$ function on the cohomology.
\end{itemize}

\section*{Acknowledgment}
I first would like to thank Pierre Cardaliaguet for pointing out to me that the proof of \ref{strict} could be done using Ilmanen's lemma.
I would like to thank Albert Fathi for his careful reading of the manuscript and for his comments and remarks during my research on this subject. I am particularly indebted to him regarding to sections \ref{section5} and \ref{section6} which were written after very inspiring conversations.  This paper was partially elaborated during a stay at the Sapienza University in Rome. I wish to thank Antonio Siconolfi, Andrea Davini   and the Dipartimento di Matematica "Guido Castelnuovo" for its hospitality while I was there. I also would like to thank Explora'doc which partially supported me during this stay. Finally, I would like to thank the ANR KAM faible (Project BLANC07-3\_187245, Hamilton-Jacobi and Weak KAM Theory) for its support during my research.
\\

First, let us recall the setting and some results proved in \cite{Za}.

\section{Known results}\label{known}
In this section we quickly survey some previously obtained results, see \cite{Za}.
Throughout this paper, we will assume $M$ is a connected $C^\infty$ complete Riemmanian manifold endowed with the distance $\d(.,.)$ coming from the Riemmanian metric. We will consider a cost function $c:M \times M\to \R{}$ verifying the properties \ref{unif} and \ref{unifb} mentioned in the introduction.
We will denote $\alpha[0]$, the Ma\~n\' e critical value as defined for example in \cite{Za}.
We say
that a function $u:M\to\R{}$ is critically dominated or that it is a critical
subsolution if it is $\alpha[0]$-dominated that is if
$$\forall (x,y)\in M \times M,\ u(y)-u(x)\leqslant c(x,y)+\alpha[0].$$
Let us mention that $\alpha[0]$ is defined as being the smallest value such that there are subsolutions. More precisely, if $C\in \R{}$, we let $\HH(C)\subset M^{\R{}}$ be the set of $C$-dominated functions, that is the set of $u$ verifying 
$$\forall (x,y)\in M \times M,\ u(y)-u(x)\leqslant c(x,y)+C.$$
Then the Ma\~n\' e critical value is
$$\inf\{C\in \mathbb{R},\  \HH(C)\neq \varnothing \}.$$

As is customary, we introduce the discrete Lax-Oleinik semi-groups:
$$\ttt u(x)=\inf_{y\in M}u(y)+c(y,x),$$
$$\TTT u(x)=\sup_{y\in M}u(y)-c(x,y).$$ 
Finally, we call negative
(resp. positive) weak KAM solution a fixed point of the operator $\ttt
+\alpha[0]$ (resp. $\TTT -\alpha[0]$).
\begin{pr}
A function $u:M\mapsto \R{}$ is a critical subsolution (written $u\<c+\alpha [0]$)  if and only if it verifies one of the following equivalent properties:
\begin{enumerate}
 \item[(i)]$\forall (x,y)\in M \times M,\ u(x)-u(y)\leqslant c(y,x)+\alpha [0]$;
\item[(ii)] $u\leqslant \ttt u+\alpha [0]$;
\item[(iii)] $u\geqslant \TTT u-\alpha [0]$.
\end{enumerate}

\end{pr}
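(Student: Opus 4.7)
The proposition is essentially a tautological unpacking of the definition of domination through the two Lax--Oleinik operators, so my plan is to verify each equivalence by direct computation with no real obstacle to overcome; I would simply make sure to state the quantifier juggling cleanly.

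First, for $(i)$: starting from the definition $u(y)-u(x)\leqslant c(x,y)+\alpha[0]$ for all $(x,y)\in M\times M$, I would just relabel the dummy variables (swap the names of $x$ and $y$) to obtain $u(x)-u(y)\leqslant c(y,x)+\alpha[0]$ for all $(x,y)$. This gives $(i)$ for free, and the reverse implication is identical.

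Second, for $(ii)$: by definition $\ttt u(x)=\inf_{y\in M} u(y)+c(y,x)$, so the inequality $u\leqslant \ttt u+\alpha[0]$ at a point $x$ reads $u(x)\leqslant \inf_{y}\bigl(u(y)+c(y,x)\bigr)+\alpha[0]$, which is equivalent (pulling the constant into the infimum and moving $u(y)$ to the other side) to $u(x)-u(y)\leqslant c(y,x)+\alpha[0]$ for every $y$. Quantifying over $x$, this is exactly condition $(i)$, hence equivalent to the definition.

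Third, for $(iii)$: $\TTT u(x)=\sup_{y} u(y)-c(x,y)$, so $u\geqslant \TTT u-\alpha[0]$ at $x$ reads $u(x)\geqslant \sup_{y}\bigl(u(y)-c(x,y)\bigr)-\alpha[0]$, equivalent to $u(y)-u(x)\leqslant c(x,y)+\alpha[0]$ for every $y$, i.e.\ the original definition (after quantifying over $x$). I would present the three equivalences in the cyclic order (definition) $\Rightarrow (i) \Rightarrow (ii) \Rightarrow (iii) \Rightarrow$ (definition), but since each reduction is an ``iff'' obtained by an elementary manipulation of an $\inf$ or $\sup$, the only care required is keeping the quantifiers straight; no geometric or analytic input (not even super-linearity or semi-concavity of $c$) is used. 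The main ``obstacle,'' if any, is purely notational bookkeeping.
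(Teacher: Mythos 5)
Your proof is correct, and since the paper states this proposition as a recalled fact from \cite{Za} without reproducing a proof, the direct unpacking you give is exactly the intended argument: $(i)$ is a relabeling of the defining inequality, $(ii)$ and $(iii)$ are equivalent to it by moving a universal quantifier across an $\inf$ (resp.\ $\sup$), and no analytic hypothesis on $c$ is used. Your handling of the quantifiers is clean and there is nothing to add.
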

\begin{df}
 \rm Consider $u:M\to \R{}$ a critical subsolution ($u\<c+\alpha [0]$). 
We will say that $u$ is strict at $(x,y)\in M \times M$ if and only if 
$$u(x)-u(y)< c(y,x)+\alpha [0].$$
We will say that $u$ is strict at $x\in M$ if
\begin{equation*}
\forall y\in M, \ u(y)-u(x)<c(x,y)+\alpha [0] \; \ 
\mathrm{ and }\; \ 
u(x)-u(y)<c(y,x)+\alpha [0] . 
\end{equation*}
\end{df}
We recall a characterization of strict continuous subsolutions (see \cite{Za}).
 \begin{pr}\label{strict}
The \textbf{continuous} critical subsolution $u$ is strict at $x$ if and only if
$u(x)<\ttt u(x)+\alpha [0]$ and $u(x)>\TTT u(x)-\alpha [0]$.
 \end{pr}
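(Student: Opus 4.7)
The plan is to prove the two implications separately. The reverse implication is essentially a reformulation of the definitions. Suppose $u(x) < \ttt u(x) + \alpha[0]$ and $u(x) > \TTT u(x) - \alpha[0]$. Unfolding $\ttt u(x) = \inf_y\{u(y)+c(y,x)\}$ shows $u(x)-\alpha[0]$ lies strictly below the infimum, so for every $y \in M$ one has $u(x)-u(y) < c(y,x)+\alpha[0]$. Symmetrically, using $\TTT u(x) = \sup_y\{u(y)-c(x,y)\}$ yields $u(y)-u(x) < c(x,y)+\alpha[0]$ for every $y$. Both inequalities defining strictness of $u$ at $x$ are therefore satisfied.

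For the forward implication, I assume $u$ is strict at $x$ in the pointwise sense and aim at $u(x) < \ttt u(x) + \alpha[0]$; the analogous statement about $\TTT u(x)$ will follow by an identical argument. The preceding proposition already gives $u(x) \leqslant \ttt u(x)+\alpha[0]$, so I argue by contradiction and assume equality. There is then a minimizing sequence $(y_n) \subset M$ with $u(y_n)+c(y_n,x) \to u(x)-\alpha[0]$. The crux is to extract a convergent subsequence $y_{n_k} \to y^{\ast}$ in $M$. Once this is established, the continuity of $u$ together with the continuity of $c$ (which is locally semi-concave, hence continuous) yields $u(y^{\ast})+c(y^{\ast},x) = u(x)-\alpha[0]$, that is $u(x)-u(y^{\ast}) = c(y^{\ast},x)+\alpha[0]$, directly contradicting the strictness of $u$ at $x$.

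The main obstacle is therefore this compactness step. Because $M$ is a complete Riemannian manifold, Hopf--Rinow reduces the problem to showing that $\d(y_n,x)$ stays bounded. I plan to combine the uniform super-linearity of $c$, which yields $c(y_n,x) \geqslant k\,\d(y_n,x)-C(k)$ for arbitrarily large $k$, with the lower bound $u(y_n) \geqslant u(x_0)-c(y_n,x_0)-\alpha[0]$ provided by the subsolution inequality at a suitably chosen auxiliary base point $x_0$, together with the uniform boundedness of $c$ on bounded sets, to show that any escape of $(y_n)$ to infinity would force $u(y_n)+c(y_n,x) \to +\infty$, contradicting its boundedness along the minimizing sequence. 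Working this estimate out carefully is the delicate point; once settled, the symmetric argument replacing $\ttt$ by $\TTT$ and $c(y,x)$ by $c(x,y)$ completes the proof.
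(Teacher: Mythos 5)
The paper does not give a proof of this proposition — it is recalled from \cite{Za} — so I can only assess your argument on its own merits. The easy direction is correct: writing out $\ttt u(x)$ and $\TTT u(x)$ as an infimum and a supremum and using the strict inequalities $u(x)<\ttt u(x)+\alpha[0]$, $u(x)>\TTT u(x)-\alpha[0]$ immediately produces the pointwise strict inequalities at every $y$.

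In the forward direction your strategy (argue by contradiction, take a minimizing sequence, show it is bounded, pass to a limit using continuity of $u$ and of $c$) is the right one, but the specific estimate you propose for the boundedness step does not close. You suggest combining super-linearity with the single-step subsolution inequality $u(y_n)\geqslant u(x_0)-c(y_n,x_0)-\alpha[0]$ at an auxiliary base point $x_0$. If you take $x_0=x$ this gives only the trivial constant bound $u(y_n)+c(y_n,x)\geqslant u(x)-\alpha[0]$, which is exactly the quantity you are trying to show is bounded away from. If you take $x_0\neq x$ you are left needing $c(y_n,x)-c(y_n,x_0)\to+\infty$, and there is no reason for this: super-linearity only gives a lower bound on each term and uniform boundedness gives no control when $y_n$ is far from both points. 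What is missing is a \emph{linear-growth} lower bound for $u$ itself, which one gets by iterating the subsolution inequality along a chain: choose points $x=z_0,z_1,\dots,z_N=y_n$ along a geodesic with $\d(z_i,z_{i+1})\leqslant 1$ and $N\leqslant \d(y_n,x)+1$, so that uniform boundedness gives $c(z_{i+1},z_i)\leqslant A(1)$ at each step and hence
$$u(y_n)\geqslant u(x)-N\bigl(A(1)+\alpha[0]\bigr)\geqslant u(x)-\bigl(\d(y_n,x)+1\bigr)\bigl(A(1)+\alpha[0]\bigr).$$
Now super-linearity with $k>A(1)+\alpha[0]$ forces $u(y_n)+c(y_n,x)\to+\infty$ if $\d(y_n,x)\to\infty$, which gives the contradiction you want. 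So the ingredients you list are the right ones, but the actual inequality requires this chain/iteration idea, which is the genuine content of the ``delicate point'' you flag and which your sketch does not supply.
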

\begin{df}\label{d:aub2}\rm
 Let $u$ from $M$ to $\R{}$ verify $u\<c+\alpha [0]$. We will say that
 a chain $(x_i)_{0\leqslant i\leqslant n}$ is $(u,c,\alpha
 [0])$-calibrated if 
$$u(x_n)=
 u(x_0)+c(x_0,x_1)+\cdots+c(x_{n-1},x_n)+n\alpha [0].$$
 Notice that a
 sub-chain of a calibrated chain formed by consecutive elements is again calibrated.  

Following Bernard and Buffoni \cite{Be} we will call
 Aubry set of $u$, $\AA_u$ the subset of $\MZ$ consisting of the
 sequences whose finite sub-chains are $(u,c,\alpha
 [0])$-calibrated. 
We set 
$$\widehat{\A}_u =\{(x,y)\in M \times M ,\ \exists (x_n)_{n\in \mathbb{Z}} \in \AA_u\;\textrm{ with }\;x_0=x\; \mathrm{
  and }\; x_1=y\},$$
 and we define the projected Aubry set of $u$ by 
$$\A _u=\{x\in M,\ \exists (x_n)_{n\in \mathbb{Z}} ,\ (u,c,\alpha
[0])\textrm{-calibrated with }x_0=x\}. $$ 
We then define the Aubry set:
$$\AA=\bigcap_{u\<c+\alpha [0]}\AA _{u},$$
the projected Aubry
sets
$$\widehat{\A} =\{(x,y)\in M \times M ,\exists (x_n)_{n\in \mathbb{Z}}\in \AA ,x=x_0\; \mathrm{
  and }\; y=x_1\},$$ 
and
 $$\A=\bigcap_{u\<c+\alpha [0]}\A _{u}$$ 
where in all cases, the
intersection is taken over all critically dominated functions. 
\end{df}
We recall some further facts obtained in \cite{Za}:
\begin{lm}\label{trivial}
 Let $u\< c+\alpha[0]$ be a dominated function and $(x,y)\in M \times M$. If the following identity is verified: 
$$u(x)-u(y)=c(y,x)+\alpha[0]$$
then $u(x)=\ttt u(x)+\alpha[0]$.
\\
If the following identity is verified: 
$$\ttt u(x)-\ttt u(y)=c(y,x)+\alpha[0]$$ 
then $u(y)=\ttt u(y)+\alpha[0]$ and $\ttt u(x)=u(y)+c(y,x)$.
\end{lm}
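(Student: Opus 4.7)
The plan is to play the definition of $\ttt$ against the subsolution inequality characterization from the preceding proposition, namely $u\leqslant \ttt u+\alpha[0]$. Both parts reduce to extracting equality from a sandwich of two inequalities, one coming from the infimum in the definition of $\ttt$ (which bounds $\ttt u(x)$ from above by any particular competitor $u(z)+c(z,x)$), and one coming from the subsolution property (which bounds $u$ from above by $\ttt u+\alpha[0]$).

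For the first statement, I would note that the subsolution inequality already yields $u(x)\leqslant \ttt u(x)+\alpha[0]$, so only the reverse inequality is needed. The hypothesis $u(x)-u(y)=c(y,x)+\alpha[0]$ can be rewritten as $u(y)+c(y,x)=u(x)-\alpha[0]$, and since $\ttt u(x)\leqslant u(y)+c(y,x)$ by taking $z=y$ as a competitor in the infimum, we get $\ttt u(x)\leqslant u(x)-\alpha[0]$, which combined with the subsolution inequality gives the claimed equality.

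For the second statement, I would start from $\ttt u(x)\leqslant u(y)+c(y,x)$ (again by feeding $z=y$ into the infimum defining $\ttt u(x)$), and on the other hand substitute the hypothesis to get
$$\ttt u(x)=\ttt u(y)+c(y,x)+\alpha[0]\geqslant \bigl(u(y)-\alpha[0]\bigr)+c(y,x)+\alpha[0]=u(y)+c(y,x),$$
where the inequality in the middle uses $u(y)\leqslant \ttt u(y)+\alpha[0]$. Sandwiching yields $\ttt u(x)=u(y)+c(y,x)$, which is the second conclusion. Plugging this back into the hypothesis gives $u(y)+c(y,x)=\ttt u(y)+c(y,x)+\alpha[0]$, i.e., $u(y)=\ttt u(y)+\alpha[0]$, which is the first conclusion. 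There is no real obstacle here: the argument is a bookkeeping exercise confirming that equality in the dominated inequality forces calibration for the $\ttt$ operator.
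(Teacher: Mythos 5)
Your proof is correct, and it is essentially the only natural argument: both parts follow by sandwiching the infimum bound $\ttt u(x)\leqslant u(y)+c(y,x)$ against the subsolution inequality $u\leqslant \ttt u+\alpha[0]$. The paper itself does not prove this lemma here (it is stated as a fact recalled from \cite{Za}), so there is nothing to compare approaches against, but your bookkeeping is accurate and complete.
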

\begin{pr}\label{egalite aubry}
 Let $u\<c+\alpha [0]$ be a dominated function, then $\AA_u=\AA_{\ttt u}$.
\\
In particular, the following equalities hold:
$\widehat{\A}_u=\widehat{\A}_{\ttt u}$ and $\A_u=\A_{\ttt u}$.
\end{pr}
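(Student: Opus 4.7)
The plan is to prove the equality $\AA_u = \AA_{\ttt u}$ by showing both inclusions via Lemma \ref{trivial} applied to consecutive pairs along any bi-infinite calibrated chain. The remaining identities $\widehat{\A}_u = \widehat{\A}_{\ttt u}$ and $\A_u = \A_{\ttt u}$ then follow immediately from the definitions, since they only record the first one or two coordinates of sequences in $\AA_u$ and $\AA_{\ttt u}$.

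For the inclusion $\AA_u \subseteq \AA_{\ttt u}$, I would take $(x_i)_{i\in\mathbb{Z}} \in \AA_u$. Restricting any calibration identity to the two-element sub-chain $(x_i, x_{i+1})$ yields $u(x_{i+1}) - u(x_i) = c(x_i, x_{i+1}) + \alpha[0]$. The first assertion of Lemma \ref{trivial}, applied with $x = x_{i+1}$ and $y = x_i$, then gives $u(x_{i+1}) = \ttt u(x_{i+1}) + \alpha[0]$. Because the chain is bi-infinite, every index is of the form $i+1$ for some earlier $i$, so the pointwise identity $u(x_j) = \ttt u(x_j) + \alpha[0]$ holds for every $j \in \mathbb{Z}$. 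Substituting $\ttt u(x_j) = u(x_j) - \alpha[0]$ into the calibration identity $u(x_n) - u(x_0) = \sum_{i=1}^{n} c(x_{i-1}, x_i) + n\alpha[0]$ leaves both sides unchanged (the two $\alpha[0]$'s cancel in the difference) and shows that the chain is $(\ttt u, c, \alpha[0])$-calibrated.

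The reverse inclusion $\AA_{\ttt u} \subseteq \AA_u$ is symmetric. From a two-element sub-chain of a calibrated $(x_i) \in \AA_{\ttt u}$ I would read off $\ttt u(x_{i+1}) - \ttt u(x_i) = c(x_i, x_{i+1}) + \alpha[0]$, and invoke the second assertion of Lemma \ref{trivial} to obtain $u(x_i) = \ttt u(x_i) + \alpha[0]$. Bi-infiniteness again delivers this equality at every index, and the same substitution recovers the calibration identity for $u$ on each finite sub-chain.

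The argument presents no real obstacle: once Lemma \ref{trivial} forces the pointwise equality $u = \ttt u + \alpha[0]$ along the entire bi-infinite chain, the telescoping sums defining calibration for $u$ and for $\ttt u$ literally coincide. The only subtlety worth highlighting is the essential use of bi-infiniteness to guarantee that every index has both a predecessor and a successor; this is exactly what makes the pointwise equality available at all the endpoints of every finite sub-chain, and it is the conceptual reason the Aubry set is defined via bi-infinite rather than half-infinite sequences.
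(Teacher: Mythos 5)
Your proof is correct and there is nothing to compare it against: the paper recalls Proposition~\ref{egalite aubry} from an earlier work (\cite{Za}) without including a proof. Applying the two assertions of Lemma~\ref{trivial} to consecutive pairs, noting that bi-infiniteness supplies a predecessor (resp.\ successor) for every index, and then observing that $u=\ttt u+\alpha[0]$ along the whole chain makes the two telescoping calibration identities coincide is exactly the right argument.
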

%

\begin{Th}\label{aubrymax}
Let $u\<c+\alpha[0]$ be a critically dominated function.
 There is a continuous subsolution $u'$ which is strict at every $(x,y)\in M \times M-\widehat{\A}_u$ and which is equal to $u$ on $\mathcal{A}_u$. In particular, we have
$$\widehat{\A}_u=\widehat{\A}_{u'}.$$

 There is a continuous subsolution $u_0$ which is strict at every $(x,y)\in M \times M-\widehat{\A}$. In particular
$$\widehat{\A}=\widehat{\A}_{u_0}.$$
\end{Th}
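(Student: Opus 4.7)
The plan is to build $u'$ as the half-sum of the ``upper'' and ``lower'' Lax--Oleinik iterates of $u$, and then derive strictness outside $\widehat{\A}_u$ via Lemma \ref{trivial}.

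\textbf{Reduction and construction.} I first replace $u$ by $\bar u := \ttt u + \alpha[0]$. This $\bar u$ is continuous, and indeed locally Lipschitz: comparing an approximate minimizer $y_n$ in $\ttt u(x) = \inf_y(u(y) + c(y,x))$ with the trivial choice $y = x$ and using the subsolution inequality for $u$ gives a uniform bound on $c(y_n, x)$, which by uniform superlinearity confines the $y_n$ to a compact set, and local semi-concavity of $c$ then supplies the Lipschitz constant. Property (ii) gives $u \leq \bar u$, and the existence of a calibrated chain through any $x \in \A_u$ forces $\bar u = u$ on $\A_u$; by Proposition \ref{egalite aubry} the Aubry data are preserved. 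So I henceforth assume $u$ is continuous. Define
$$u_+ := \ttt u + \alpha[0], \quad u_- := \TTT u - \alpha[0], \quad u' := \tfrac12(u_+ + u_-).$$
Both $u_\pm$ are continuous critical subsolutions with $u_- \leq u \leq u_+$, and by the calibration argument and its $\TTT$-dual, $u_\pm = u$ on $\A_u$. Thus $u'$ is a continuous critical subsolution with $u' = u$ on $\A_u$.

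\textbf{Strictness.} Suppose $u'(y) - u'(x) = c(x,y) + \alpha[0]$ at some $(x,y)$. Since $u_\pm(y) - u_\pm(x) \leq c(x,y) + \alpha[0]$ individually, both inequalities must saturate. Lemma \ref{trivial} applied to the $u_+$-saturation yields $u(x) = \ttt u(x) + \alpha[0]$ and $\ttt u(y) = u(x) + c(x,y)$, while its $\TTT$-analogue applied to the $u_-$-saturation gives $u(y) = \TTT u(y) - \alpha[0]$ and $\TTT u(x) = u(y) - c(x,y)$; combined, these force $u(y) - u(x) = c(x,y) + \alpha[0]$. To conclude that $(x,y) \in \widehat{\A}_u$, I must upgrade the one-step equalities $u(x) = \ttt u(x) + \alpha[0]$ and $u(y) = \TTT u(y) - \alpha[0]$ to a bi-infinite calibrated chain through $(x,y)$: attainment of the infimum (resp.\ supremum) in the Lax--Oleinik operator produces a predecessor $x_{-1}$ of $x$ and a successor $y_1$ of $y$, and inductive propagation of the equalities to $x_{-1}, x_{-2}, \dots$ and $y_1, y_2, \dots$ builds the chain. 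Splicing at $x_0 := x$, $x_1 := y$ produces a sequence in $\AA_u$, so $(x,y) \in \widehat{\A}_u$.

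\textbf{Global subsolution and main obstacle.} For the second assertion, the first part gives, for each critical subsolution $u$, a continuous subsolution whose non-strict set is exactly $\widehat{\A}_u$ (closed). Since $\widehat{\A} = \bigcap_u \widehat{\A}_u$, the open sets $M \times M \setminus \widehat{\A}_u$ cover $M \times M \setminus \widehat{\A}$, and the Lindel\"of property of $M \times M$ extracts a countable subcover with associated strict subsolutions $(u'_n)$. After normalizing constants so that $u_0 := \sum_n \lambda_n u'_n$ (with positive $\lambda_n$ summing to $1$) converges locally uniformly, $u_0$ is a continuous critical subsolution strict outside $\widehat{\A}$. The main technical obstacle is the inductive chain extraction in the strictness step: propagating the equality $u = \ttt u + \alpha[0]$ from $x$ to its predecessor $x_{-1}$, and similarly propagating $u = \TTT u - \alpha[0]$ forward, is not automatic from Lemma \ref{trivial} alone; it requires a compactness argument (the candidate predecessors stay in a fixed compact by the same superlinearity estimate used for the Lipschitz reduction) together with iterated application of Lemma \ref{trivial} and its $\TTT$-dual.
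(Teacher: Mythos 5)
There is a genuine gap in the strictness step, and it arises before the propagation issue you flag at the end.

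You suppose $u'(y)-u'(x) = c(x,y)+\alpha[0]$ and correctly deduce that both $u_+$ and $u_-$ saturate at $(x,y)$. Applying Lemma~\ref{trivial} (and its $\TTT$-dual) then yields precisely the four identities (a) $u(x)=\ttt u(x)+\alpha[0]$, (b) $\ttt u(y)=u(x)+c(x,y)$, (c) $u(y)=\TTT u(y)-\alpha[0]$, (d) $\TTT u(x)=u(y)-c(x,y)$. You then assert that ``combined, these force $u(y)-u(x)=c(x,y)+\alpha[0]$.'' This does not follow. From (b) one only gets $u(y)\leqslant \ttt u(y)+\alpha[0]=u(x)+c(x,y)+\alpha[0]$, and from (d) one only gets $u(x)\geqslant \TTT u(x)-\alpha[0]=u(y)-c(x,y)-\alpha[0]$: both are just the subsolution inequality $u(y)-u(x)\leqslant c(x,y)+\alpha[0]$ again, not the equality. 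Conditions (a) and (c) say that $u$ admits a one-step calibrated \emph{predecessor} of $x$ and a one-step calibrated \emph{successor} of $y$, but nothing couples $u(x)$ to $u(y)$ directly. Without the equality $u(y)-u(x)=c(x,y)+\alpha[0]$ there is no calibrated link $(x,y)$, so one cannot splice a bi-infinite chain through $(x,y)$ no matter how successfully one propagates backward from $x$ and forward from $y$.

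More fundamentally, the approach cannot work as stated: the Lax--Oleinik semi-groups \emph{preserve} strictness but do not \emph{create} it. The paper itself makes this explicit in the unlabeled proposition after Lemma~\ref{err} (section~\ref{existence}): ``If $u$ is a continuous subsolution which is \emph{strict outside of} $\widehat{\A}_u$ then $\ttt u$ and $\TTT u$ are also subsolutions strict outside of $\widehat{\A}_u$'' --- the hypothesis that $u$ is already strict outside $\widehat{\A}_u$ is essential, and it is exactly Theorem~\ref{aubrymax} that supplies it. Taking $u_\pm$ and averaging only moves you within the class of subsolutions with the same Aubry set; it does not shrink the non-strictness set down to $\widehat{\A}_u$. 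For instance, if $u$ is a negative weak KAM solution whose Aubry set is small, $u_+=u$ fails to be strict on a large calibrated region, and the saturation of $u_-$ at a point $(x,y)$ on that region is an extra condition that does not by itself put $(x,y)$ in $\widehat{\A}_u$. The actual proof (from \cite{Za}, in the spirit of Fathi--Siconolfi) instead builds, for each pair $(x_0,y_0)\notin\widehat{\A}_u$, a subsolution strict \emph{at that single pair} by a direct modification of $u$ using the fact that the chain through $(x_0,y_0)$ fails to be bi-infinitely calibrated, and then sums a countable family of such functions --- the summation step being the part your ``global subsolution'' paragraph reproduces correctly, while the per-pair construction is the piece your argument is missing.
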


\begin{pr}\label{newpr}
Let $u:M\to \R{}$ be a critical subsolution. If $u$ is strict at every $(x,y)\in M \times M-\widehat{\A}_u$ then $u$ is strict at every $x\in M-\A_u$. In particular, if $u$ is continuous, the following inequalities hold:
$$\forall x\in  M-\A_u, \  u(x)< \ttt u+\alpha[0],$$
$$\forall x\in  M-\A_u, \  u(x)> \TTT u-\alpha[0].$$
\end{pr}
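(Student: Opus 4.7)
The plan is to reduce strictness at a point $x$ to strictness at the pairs $(x,y)$ and $(y,x)$ for all $y$, and then to show that $x \notin \A_u$ forces each such pair to lie outside $\widehat{\A}_u$. The ``in particular'' inequalities will then follow at once from Proposition \ref{strict}.

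First I would unfold the definition of ``strict at $x$'': it requires, for every $y \in M$, both $u(y) - u(x) < c(x,y) + \alpha[0]$ and $u(x) - u(y) < c(y,x) + \alpha[0]$. Matching these against the definition of ``strict at a pair,'' they are precisely the assertions ``$u$ is strict at $(y,x)$'' and ``$u$ is strict at $(x,y)$'' respectively. Hence it suffices to prove that whenever $x \notin \A_u$, both $(x,y)$ and $(y,x)$ lie outside $\widehat{\A}_u$ for every $y \in M$; the hypothesis of the proposition will then deliver strictness at $x$.

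The heart of the argument is then a short contradiction combined with the shift invariance of $\AA_u$. If $(x,y) \in \widehat{\A}_u$, there is a calibrated chain $(x_n)_{n \in \mathbb{Z}} \in \AA_u$ with $x_0 = x$ and $x_1 = y$; but then $x = x_0$ witnesses $x \in \A_u$, a contradiction. If instead $(y,x) \in \widehat{\A}_u$, pick $(x_n) \in \AA_u$ with $x_0 = y$ and $x_1 = x$, and set $\hat{x}_n := x_{n+1}$. Since $\AA_u$ is defined via its finite sub-chains of consecutive elements, which are unchanged under an index shift, $(\hat{x}_n) \in \AA_u$ as well, and it has $\hat{x}_0 = x$, again contradicting $x \notin \A_u$. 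Thus each pair is outside $\widehat{\A}_u$, the hypothesis furnishes strictness at all such pairs, and therefore $u$ is strict at $x$.

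The remaining inequalities are then an immediate application of Proposition \ref{strict}, which, for a continuous critical subsolution, equates strictness at $x$ with the pair of pointwise inequalities $u(x) < \ttt u(x) + \alpha[0]$ and $u(x) > \TTT u(x) - \alpha[0]$. The only mildly delicate point in the whole argument, and the closest thing to a real obstacle, is noticing the shift invariance of $\AA_u$; this is however essentially built into the definition by means of finite sub-chains and requires no further work.
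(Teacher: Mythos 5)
The paper states Proposition \ref{newpr} without proof in the ``Known results'' section, citing \cite{Za}, so there is no in-paper argument to compare against. Your proof is correct: the unwinding of ``strict at $x$'' into ``strict at $(y,x)$'' and ``strict at $(x,y)$'' for all $y$ matches the two definitions exactly, the shift-invariance of $\AA_u$ holds because its defining condition concerns only finite sub-chains of consecutive elements (which are preserved under an index shift), and the final passage to the displayed inequalities is indeed just Proposition \ref{strict} applied to a continuous subsolution.
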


\section{More regularity, the twist conditions and the partial dynamic}\label{2}
 We will
now suppose that the cost function is locally semi-concave, see \cite{fatfig07} or \cite{can} for a definition. In this text we will use the term locally semi-concave to refer to what is usually called locally semi-concave with linear modulus. Let
us begin with some basic properties of locally semi-concave functions
that we will need later.
\begin{pr}[differentiability properties]
The following properties hold 

 \begin{enumerate}
 \item[(i)] Let $f$ be a locally semi-concave function from $M$ to $\R{}$ and let $x_0$ be a local minimum of $f$, then $f$ is differentiable at $x_0$ and $\d_{x_0}f=0$.
\item[(ii)]Let $f$ and $g$ be two locally semi-concave functions from $M$ to $\R{}$ and $x_0$ be a point where $f+g$ is differentiable, then both $f$ and $g$ are differentiable at $x_0$.
\end{enumerate} 
\end{pr}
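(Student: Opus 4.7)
The plan is to work locally in a chart and argue via superdifferentials. Since both statements are local, I can assume $f$ (and $g$) is defined on an open subset $U$ of $\R{n}$ and admits a decomposition $f=\phi+\psi$ with $\phi$ smooth and $\psi$ concave; equivalently, there exists $C\geqslant 0$ such that $f-\frac{C}{2}|\cdot|^{2}$ is concave on a neighborhood of $x_{0}$. Let
$$D^{+}f(x)=\Bigl\{\,p\in\R{n}:f(x+h)\leqslant f(x)+\langle p,h\rangle+\tfrac{C}{2}|h|^{2}\text{ for all small }h\Bigr\}$$
be the (local) superdifferential, and $D^{-}f(x)$ the analogous subdifferential with $\geqslant$ and $-\frac{C}{2}|h|^{2}$. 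The three classical facts I will use are: (a) $D^{+}f(x)$ is nonempty, convex, and compact (inherited from the concave part $\psi$); (b) if $f$ and $g$ are semi-concave, then $D^{+}f(x)+D^{+}g(x)\subseteq D^{+}(f+g)(x)$; and (c) $f$ is differentiable at $x$ if and only if $D^{+}f(x)$ is a singleton, in which case it equals $\{\d_{x}f\}$.

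For (i), since $x_{0}$ is a local minimum, $f(x_{0}+h)\geqslant f(x_{0})$ for $h$ small, so $0\in D^{-}f(x_{0})$. Pick any $p\in D^{+}f(x_{0})$; combining with the previous inequality gives
$$\langle p,h\rangle+\tfrac{C}{2}|h|^{2}\geqslant 0$$
for all small $h$. Substituting $h=-tp$ and letting $t\to 0^{+}$ forces $p=0$. Hence $D^{+}f(x_{0})=\{0\}$, and fact (c) yields that $f$ is differentiable at $x_{0}$ with $\d_{x_{0}}f=0$.

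For (ii), take any $p\in D^{+}f(x_{0})$ and $q\in D^{+}g(x_{0})$ (nonempty by (a)). Fact (b) places $p+q$ in $D^{+}(f+g)(x_{0})$. Since $f+g$ is differentiable at $x_{0}$ and remains semi-concave, fact (c) applied to $f+g$ gives $D^{+}(f+g)(x_{0})=\{\d_{x_{0}}(f+g)\}$. Thus $p+q=\d_{x_{0}}(f+g)$ independently of the choice of $(p,q)$, which, by fixing one coordinate and varying the other, forces both $D^{+}f(x_{0})$ and $D^{+}g(x_{0})$ to be singletons. A second use of (c) shows that $f$ and $g$ are each differentiable at $x_{0}$ (and $\d_{x_{0}}f+\d_{x_{0}}g=\d_{x_{0}}(f+g)$).

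The only nontrivial ingredient is the characterization (c). It is standard for concave functions, where it is equivalent to the uniqueness of the supporting hyperplane at a point, and it transfers to semi-concave functions through the local decomposition $f=\phi+\psi$ because $D^{+}f(x)=\d_{x}\phi+D^{+}\psi(x)$ and $\phi$ is already smooth. I expect justifying (c) to be the main point a careful reader might want expanded; once it is granted, the rest of the argument is a short manipulation of super- and subdifferentials.
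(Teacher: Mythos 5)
Your proof is correct and follows essentially the same route as the paper: both work in a local chart via nonempty superdifferentials of semi-concave functions, and your argument for (ii) is identical to the paper's. The only minor divergence is in (i), where the paper avoids invoking the equivalence \textquotedblleft$D^{+}f(x_{0})$ a singleton $\iff$ $f$ differentiable at $x_{0}$\textquotedblright\ by deriving the squeeze bound $0\leqslant f(x)\leqslant K\|x\|^{2}$ directly once the supporting hyperplane is shown to vanish; your version instead establishes $D^{+}f(x_{0})=\{0\}$ and then appeals to that characterization, which is fine but makes the characterization load-bearing in both parts rather than just in (ii).
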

\begin{proof}
(i) Since the result is local, we can suppose $f$ is defined on an open subset $U\subset \R{n}$, that
  it is semi-concave, and that $x_0=0$ is a global minimum. Moreover,
  since the problem is invariant by addition of a constant to $f$, we
  will assume $f(0)=0$. Let $K\in \R{}$ such that $x\rightarrow
  f(x)-K\|x\|^2$ is concave on $U$. By the Hahn-Banach theorem, there
  is a linear form $p$ such that
\begin{equation}\label{11}
 \forall x\in U, 0\leqslant f(x)\leqslant p(x)+K\|x\|^2.
\end{equation}
The positive function $p(x)+K\|x\|^2$ admits a local minimum at $0$. Its differential at $0$ must vanish so $p=0$ and
\begin{equation}
 \forall x\in U, 0\leqslant f(x)\leqslant K\|x\|^2 
\end{equation}
 therefore $f$ is differentiable at $0$ with $\d_0f=0$.\\ (ii) Once
 more, let us assume that $f$ and $g$ are defined on an open subset
 $U\subset \R{n}$, that they are semi-concave and that $x_0=0$. It is
 clear that if $p$ and $q$ are linear forms respectively in the
 super-differential at $0$ of $f$ and $g$ then $p+q$ is in the
 super-differential at $0$ of $f+g$. Since $f+g$ is differentiable at
 $0$, its super-differential at $0$ is a singleton. Moreover, $f$ and
 $g$'s super-differentials at $0$ are non empty by the Hahn-Banach
 theorem and must also be singletons. This proves that $f$ and $g$ are
 differentiable at $0$.
\end{proof}
\begin{df}[minimizing chains]\rm
Let $(x,y)\in M \times M$ and $k\in \mathbb{N^*}$, we will say that $(x_1,\ldots ,x_k)\in M^k$ is a minimizing chain between $x$ and $y$ if, setting $x_0=x$ and $x_{k+1}=y$,
$$\forall (y_1,\ldots ,y_{k})\in M^k,\ \sum_{i=0}^k c(x_i,x_{i+1})\leqslant c(x,y_1)+\sum_{i=1}^{k-1} c(y_i,y_{i+1})+c(y_k,y).$$
Notice that any sub-chain of a minimizing chain formed by consecutive elements is again minimizing.\\
We will say that a sequence $(x_n)_{n\in \mathbb{Z}}$ is a minimizing sequence if all sub-chains formed
by consecutive elements are minimizing.
\end{df}
A straightforward consequence of the previous results is the following theorem.
\begin{Th}
 If $(x,x_1,y)\in M \times M \times M$ is a minimizing chain then $\partial c/\partial y(x,x_1)$ and $\partial c/\partial x(x_1,y)$ exist and verify 
$$\label{e:EL}
 \frac{\partial c}{\partial y}(x,x_1)+\frac{\partial c}{\partial x}(x_1,y)=0.
\eqno{\mathrm{(EL)}}$$
\end{Th}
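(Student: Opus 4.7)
The plan is to exploit the minimizing property to produce a local minimum of an auxiliary semi-concave function, and then invoke the differentiability proposition from this same section.

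More precisely, I would fix $x$ and $y$ and consider the two partial functions $f : z \mapsto c(x,z)$ and $g : z \mapsto c(z,y)$ defined on $M$. Because $c$ is locally semi-concave on $M \times M$, its restriction to a ``horizontal'' slice $\{x\}\times M$ (resp.\ ``vertical'' slice $M \times \{y\}$) is again locally semi-concave; this is a straightforward check in local charts using the fact that fixing coordinates in a sum of a smooth function and a concave function yields a sum of a smooth function and a concave function. The minimizing chain hypothesis applied to competitor chains of the form $(z)$ (i.e.\ $k=1$, varying only the middle point) exactly says
\begin{equation*}
\forall z \in M,\ f(x_1) + g(x_1) = c(x,x_1) + c(x_1,y) \leqslant c(x,z) + c(z,y) = f(z) + g(z),
\end{equation*}
so $f+g$ attains a global minimum at $x_1$.

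Now I would apply the preceding proposition on differentiability of locally semi-concave functions. Part (i) applied to the locally semi-concave function $f+g$ at its minimum $x_1$ gives that $f+g$ is differentiable at $x_1$ with $\d_{x_1}(f+g)=0$. Part (ii) then forces both $f$ and $g$ to be differentiable at $x_1$ separately, i.e.\ $\partial c/\partial y(x,x_1)$ and $\partial c/\partial x(x_1,y)$ exist. Summing their differentials yields the Euler-Lagrange-type identity (EL).

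The only mildly delicate step is the claim that slices of a locally semi-concave function remain locally semi-concave: in a chart around $x_1$ one writes $c = s + h$ with $s$ smooth and $h$ concave on a small convex neighbourhood, and notes that both $z\mapsto s(x,z)$ and $z\mapsto h(x,z)$ inherit the required regularity (the latter being concave as a restriction of a concave function to an affine subspace). Once this is granted, the rest of the argument is a direct combinatorial application of parts (i) and (ii) of the previous proposition, so I would not expect a substantial obstacle.
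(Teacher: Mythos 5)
Your argument is exactly the one the paper has in mind: it states the theorem as ``a straightforward consequence of the previous results'' and gives no separate proof, the intended reasoning being precisely the reduction to the $k=1$ minimizing-chain inequality, which exhibits $x_1$ as a global minimum of the locally semi-concave function $z\mapsto c(x,z)+c(z,y)$, followed by parts (i) and (ii) of the differentiability proposition. Your extra remark about why slices of a locally semi-concave function remain locally semi-concave is the correct (and necessary) justification, and it is sound: in a chart $c-K\|\cdot\|^2$ is concave, and restricting to an affine slice preserves both concavity and the quadratic correction.
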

The equation above may be considered as a discrete analog of the
Euler-Lagrange equation. It was already introduced in works on twist maps such as \cite{MatIHES}. By analogy, we therefore can define extremal
chains and extremal sequences.
\begin{df}[extremal chains]\rm
 We will say that $(x,x_1,\ldots ,x_{k-1},y)$ is an extremal chain if
 for every $i\in [1,k-1]$, 
$(x_{i-1},x_i,x_{i+1})$ verify (EL) that is
$$\frac{\partial c}{\partial y}(x_{i-1},x_i)+\frac{\partial c}{\partial x}(x_i,x_{i+1})=0,$$
where $x_0=x$ et $x_{k}=y$.
\\
We will say that a sequence $(x_n)_{n\in \mathbb{Z}}$ is extremal if for every $i\in \mathbb{Z}$, $(x_{i-1},x_i,x_{i+1})$ verify (EL), that is
$$\frac{\partial c}{\partial y}(x_{i-1},x_i)+\frac{\partial c}{\partial x}(x_i,x_{i+1})=0.$$
\end{df}
\begin{rem}\rm
Notice that minimizing chains and sequences are extremal.
\end{rem} 
It seems now natural to try and define a dynamic on $M$ as follows: given
two points $x_1$ and $x_2$, we would like to find an $x_3$ such that
the triplet $(x_1,x_2,x_3)$ verifies the discrete Euler-Lagrange equation (EL).
However such an $x_3$ if it exists is not necessarily unique.
To solve this problem, we introduce an additional constraint. It has already been introduced in the optimal transportation setting (see \cite{fatfig07}) and it is reminiscent of twist maps of the circle (see \cite{Ma} or \cite{Ba}):
\begin{df}\rm
 We will say that $c$ verifies the \textit{right twist condition} if
 for every $x\in M$, the function $y\mapsto \partial c/\partial
   x(x,y)$ is injective where it is defined.

 Similarly, we will say
 $c$ verifies the \textit{left twist condition} if for every $y\in M$,
 the function $x\mapsto \partial c/\partial y(x,y)$ is
 injective where it is defined.\\ Finally we say $c$ verifies the
 \textit{twist condition} if $c$ verifies the left and right twist
 conditions.
\end{df}

For more explanations about this definition see \cite{fatfig07}. Let us just state that costs coming from time-periodic Tonelli Lagrangians satisfy the twist condition as is explained in the next section.

It is possible under the right twist condition to define a partial
dynamic on $M \times M$ in the future and to define one in the past using the
left twist condition. Let us be more precise on those points.
Following \cite{fatfig07}, let us define the \textit{skew Legendre transforms}:
\begin{df}\rm
We define the left skew Legendre transform as the partial map
$$\Lambda_c^l:M \times M\to T^*M,$$
$$(x,y)\mapsto \left( x,-\frac{\partial c}{\partial
   x}(x,y)\right),$$
   whose domain of definition is
   $$\mathcal{D}( \Lambda_c^l)=\left\{(x,y)\in M \times M,\ \frac{\partial c}{\partial
   x}(x,y) \,\,\mathrm{exists}\right\}.$$
Similarly, let us define the right skew Legendre transform as the partial map
$$\Lambda_c^r:M \times M\to T^*M,$$
$$(x,y)\mapsto \left( y,\frac{\partial c}{\partial
   y}(x,y)\right),$$
   whose domain of definition is
   $$\mathcal{D}( \Lambda_c^r)=\left\{(x,y)\in M \times M,\ \frac{\partial c}{\partial
   y}(x,y) \,\,\mathrm{exists}\right\}.$$
\end{df}
Note that saying that $c$ verifies the left (resp. right) twist condition amounts to saying that the left (resp. right) skew Legendre transform is injective. Now we define the partial dynamics on $M \times M$.
\begin{df}[partial dynamics]\rm
Let $c:M \times M\to \R{}$ be a locally semi-concave cost function which verifies the left twist condition. Set $\varphi_{-1}:M \times M\to M \times M$  the partial map defined by
$$\varphi_{-1}(x,y)=(\Lambda^l_c)^{-1}\circ \Lambda^r_c(x,y),$$
Similarly, if $c:M \times M\to \R{}$ is a locally semi-concave cost function which verifies the right twist condition, set $\varphi_{+1}:M \times M\to M \times M$  the partial map defined by
$$\varphi_{+1}(x,y)=(\Lambda^r_c)^{-1}\circ \Lambda^l_c(x,y).$$
\end{df}
\begin{rem}\rm
If both left an right twist conditions are verified, it is clear that $\varphi_{-1}$ and $\varphi_{+1}$ are  inverses of one another on the intersection of their domain of definition.
\end{rem}

\section{Example: costs coming from Tonelli Lagrangian}\label{example}
This section is devoted to explaining how these notions apply to costs coming from Tonelli Lagrangians. A convenient reference for the proofs of these results is the appendix of \cite{fatfig07}. \\
Let $L:TM\times \R{}\to \R{}$ be a time periodic Tonelli Lagrangian, that is a $C^2$ function verifying
\begin{enumerate}
\item\textbf{uniform super-linearity}: for every $K>0$, there exists $C^*(K)\in \R{}$ such that 
$$\forall (x,v,t)\in TM\times \R{},\  L(x,v,t)\geqslant K\|v\|-C^*(K),$$
\item\textbf{uniform boundedness}: for every $R\geqslant 0$, we have
$$A^*(R)=\sup \{L(x,v,t),\|v\|\leqslant R\}<+\infty,$$
\item\textbf{$C^2$-strict convexity in the fibers}: for every $(x,v,t)\in TM\times \R{}$, the second derivative along the fibers $\partial ^2L/\partial v^2 (x,v,t)$ is positive strictly definite,
\item\textbf{time periodicity}: for every $(x,v,t)\in TM\times \R{}$, we have the relation $L(x,v,t)=L(x,v,t+1)$,
\item\textbf{completeness}: the Euler-Lagrange flow associated to $L$ is complete.
\end{enumerate} 

 Then we can define a cost function $c_L$ by
$$\forall (x,y)\in M \times M,\ c_L(x,y)=\inf_{\substack{\gamma(0)=x\\\gamma(1)=y}}\int_{s=0}^1L(\gamma(s),\dot{\gamma}(s),s)\d s,$$
where the infimum is taken over all absolutely continuous curves.\\
\begin{pr}
The cost  $c_L$ verifies conditions \ref{unif} and \ref{unifb} and  is locally semi-concave.
\end{pr}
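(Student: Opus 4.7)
My plan is to verify the three required properties---uniform super-linearity, uniform boundedness, and local semi-concavity---in turn, keeping in mind that the first two are soft consequences of the Tonelli hypotheses on $L$, while the last one carries the real content.

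\emph{Super-linearity and boundedness.} For super-linearity, given $k\geqslant 0$, I integrate the bound $L(x,v,t)\geqslant k\|v\|-C^*(k)$ along any absolutely continuous curve $\gamma:[0,1]\to M$ joining $x$ to $y$; since $\int_0^1\|\dot\gamma(s)\|\d s\geqslant\d(x,y)$, this produces $\int_0^1 L(\gamma,\dot\gamma,s)\,\d s\geqslant k\d(x,y)-C^*(k)$, and passing to the infimum yields property~\ref{unif} with $C(k)=C^*(k)$. For uniform boundedness, when $\d(x,y)\leqslant R$ the Hopf--Rinow theorem (valid since $M$ is complete) gives a constant-speed minimizing geodesic $\gamma:[0,1]\to M$ with $\|\dot\gamma(s)\|=\d(x,y)\leqslant R$, and then $c_L(x,y)\leqslant\int_0^1 L(\gamma,\dot\gamma,s)\d s\leqslant A^*(R)$.

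\emph{Local semi-concavity.} Fix $(\bar x,\bar y)\in M\times M$. By Tonelli's existence theorem---applicable thanks to strict fiberwise convexity, super-linearity and completeness of the Euler--Lagrange flow---there is a $C^2$ minimizer $\bar\gamma:[0,1]\to M$ of the action from $\bar x$ to $\bar y$. Pick charts around $\bar x$ and $\bar y$ and smooth cut-offs $\chi_0,\chi_1:[0,1]\to[0,1]$ supported in small intervals near $0$ and $1$ respectively along which $\bar\gamma$ stays in these charts, with $\chi_0(0)=\chi_1(1)=1$. For $(x,y)$ near $(\bar x,\bar y)$ define (read in the local charts near the endpoints and as $\bar\gamma$ elsewhere)
$$\gamma_{x,y}(s)=\bar\gamma(s)+\chi_0(s)(x-\bar x)+\chi_1(s)(y-\bar y).$$
This is an admissible curve from $x$ to $y$ depending smoothly on $(x,y)$ and coinciding with $\bar\gamma$ at $(\bar x,\bar y)$. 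A second-order Taylor expansion of $s\mapsto L(\gamma_{x,y}(s),\dot\gamma_{x,y}(s),s)$ around $(\bar x,\bar y)$---using the uniform $C^2$ bounds of $L$ on the compact tube swept by this family---gives
$$c_L(x,y)\leqslant\int_0^1 L(\gamma_{x,y},\dot\gamma_{x,y},s)\d s=\phi(x,y)+R(x,y),$$
with $\phi$ smooth and $|R(x,y)|\leqslant K(\|x-\bar x\|^2+\|y-\bar y\|^2)$ for a uniform constant $K$. In the product chart, this exhibits $c_L$ as a smooth function plus a concave function ($-K\|\cdot\|^2$ is concave), which is exactly local semi-concavity with linear modulus.

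\emph{Main obstacle.} The delicate point is legitimizing the uniform $C^2$ bound behind $K$: one must know that minimizers---and the competing curves $\gamma_{x,y}$ with nearby endpoints---stay inside a fixed compact subset of $TM\times\R{}$. This is the content of the classical a priori speed bounds for Tonelli minimizers, themselves a consequence of uniform super-linearity of $L$ (a minimizer cannot be much faster than a straight constant-speed curve, whose action is controlled by $A^*$). Once this compactness is secured, the Taylor estimate above is uniform on a neighborhood of $(\bar x,\bar y)$ and the local semi-concavity follows.
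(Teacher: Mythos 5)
The paper does not include a proof of this proposition; it states it as known and refers the reader to the appendix of \cite{fatfig07}. Your argument is correct and is essentially the standard one found there: integrate the fiberwise bounds on $L$ along competitors (any curve for super-linearity, a constant-speed minimizing geodesic for boundedness), and for local semi-concavity build a family of competitors depending $C^2$ on the endpoints by patching a Tonelli minimizer near its ends with cut-offs in charts, so that the resulting action is a $C^2$ upper support function for $c_L$ touching it at the base point; a uniform bound on its Hessian over a compact set of base points then yields semi-concavity with linear modulus. You correctly single out as the one delicate step the uniformity of that Hessian bound, secured by the a priori compactness of Tonelli minimizers with endpoints in a compact set, which is itself a consequence of the uniform super-linearity and boundedness of $L$.
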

 Let $(x,y)\in M \times M$ and let $\gamma_{x,y}$ verify that
$$c_L(x,y)=\int_{s=0}^1L(\gamma_{x,y}(s),\dot{\gamma}_{x,y}(s),s)\d s,$$
with $\gamma_{x,y}(0)=x$ and $\gamma_{x,y}(1)=y$ then the following holds:
\begin{pr}
 The linear form on $TM\times TM$ defined by 
$$(v,w)\mapsto \frac{\partial L}{\partial v} (y,\dot{\gamma}_{x,y}(1),0)w-\frac{\partial L}{\partial v} (x,\dot{\gamma}_{x,y}(0),0)v$$
is a super-differential of $c_L$ at $(x,y)$. In particular, if $\partial c_L/\partial x(x,y)$ exists then it must be equal to $-\partial L/\partial v (x,\dot{\gamma}_{x,y}(0),0)$ and similarly, if $\partial c_L/\partial y(x,y)$ exists then it must be equal to $\partial L/\partial v (y,\dot{\gamma}_{x,y}(1),0)$. 
\end{pr}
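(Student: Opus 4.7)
The plan is to prove the super-differential property by perturbing the minimizer $\gamma_{x,y}$. Given a sufficiently small pair $(v,w)\in T_xM\times T_yM$, I would construct a smooth family of admissible curves $\Gamma(\cdot,v,w):[0,1]\to M$ joining $\exp_x(v)$ to $\exp_y(w)$, with $\Gamma(\cdot,0,0)=\gamma_{x,y}$, and then read off the linear part of the action along this family.

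Concretely, one can work in a tubular neighbourhood of the graph of $\gamma_{x,y}$: choose a smooth section $\xi(s,v,w)$ of $\gamma_{x,y}^*TM$, depending linearly on $(v,w)$, with $\xi(0,v,w)=v$ and $\xi(1,v,w)=w$, and set $\Gamma(s,v,w)=\exp_{\gamma_{x,y}(s)}\xi(s,v,w)$. Since this curve is admissible from $\exp_x(v)$ to $\exp_y(w)$, the definition of $c_L$ yields
$$c_L\bigl(\exp_x(v),\exp_y(w)\bigr)\leqslant \int_0^1 L\bigl(\Gamma(s,v,w),\partial_s\Gamma(s,v,w),s\bigr)\,\d s.$$
A standard Taylor expansion of the right-hand side at $(v,w)=(0,0)$ gives
$$\int_0^1 L\bigl(\Gamma,\partial_s\Gamma,s\bigr)\,\d s = c_L(x,y)+\int_0^1\left(\frac{\partial L}{\partial x}\cdot\xi+\frac{\partial L}{\partial v}\cdot\dot{\xi}\right)\d s+o(\|(v,w)\|),$$
the derivatives of $L$ being evaluated along $(\gamma_{x,y}(s),\dot{\gamma}_{x,y}(s),s)$. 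Since $\gamma_{x,y}$ is a Tonelli minimizer, it is $C^2$ and satisfies the Euler--Lagrange equation (classical; see the appendix of \cite{fatfig07}); integrating by parts then annihilates the integrand and leaves only the boundary term $\frac{\partial L}{\partial v}(y,\dot{\gamma}_{x,y}(1),0)w-\frac{\partial L}{\partial v}(x,\dot{\gamma}_{x,y}(0),0)v$. Combined with the inequality above, this shows that the claimed linear form lies in the super-differential of $c_L$ at $(x,y)$.

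The ``in particular'' statements follow immediately: taking $w=0$, the function $v\mapsto c_L(\exp_x(v),y)$ is bounded above to first order by $c_L(x,y)+\bigl(-\partial L/\partial v(x,\dot{\gamma}_{x,y}(0),0)\bigr)v+o(\|v\|)$, so if $\partial c_L/\partial x(x,y)$ exists the two first-order expansions must match, forcing $\partial c_L/\partial x(x,y)=-\partial L/\partial v(x,\dot{\gamma}_{x,y}(0),0)$; symmetrically, $v=0$ gives the formula for $\partial c_L/\partial y(x,y)$. The main obstacle is controlling the remainder: one must verify that, thanks to the $C^2$ regularity of $L$ and the smoothness of the family $\Gamma$ along the compact curve $\gamma_{x,y}$, all second-order corrections combine into a genuine $o(\|(v,w)\|)$ error uniformly in the direction of $(v,w)$; this is routine once the construction is written down in local charts along the minimizer.
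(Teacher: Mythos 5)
Your argument is correct and is essentially the standard one: the paper does not prove this proposition itself but defers to the appendix of Fathi--Figalli, and the argument there is precisely the first-variation computation you give — perturb the minimizer by a family of curves joining $\exp_x(v)$ to $\exp_y(w)$, expand the action to first order, integrate by parts using the Euler--Lagrange equation for $\gamma_{x,y}$, and read off the boundary terms as the super-differential (with time-periodicity of $L$ identifying the $t=1$ and $t=0$ fiber derivatives as in the statement). The derivation of the ``in particular'' clauses by restricting to $w=0$ (resp.\ $v=0$) and using that a super-differential together with the existence of the partial derivative forces equality of the linear parts is also correct.
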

Therefore, if either of the partial derivatives exists, the curve $\gamma_{x,y}$ realizing the minimum is unique (since $L$ is strictly convex, the mapping $\partial L/\partial v$ is injective in each fiber and since $\gamma_{x,y}$ is an action minimizing curve for $L$ and the flow is complete, it is a trajectory of the Euler-Lagrange flow). As a corollary, we have:
\begin{Th}
 The cost $c_L$ verifies both left and right twist conditions.
 \end{Th}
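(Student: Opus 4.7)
The plan is to reduce the twist condition for $c_L$ to two ingredients already in hand: the strict convexity of $L$ in the fiber variable (which makes the fiberwise Legendre map $v \mapsto \partial L/\partial v (x,v,t)$ injective), and the completeness and uniqueness of the Euler--Lagrange flow.

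For the right twist condition, I would fix $x \in M$ and take $y_1, y_2$ in the domain of $y \mapsto \partial c_L/\partial x(x,y)$ with
$$\frac{\partial c_L}{\partial x}(x,y_1) = \frac{\partial c_L}{\partial x}(x,y_2).$$
By the previous proposition, each partial derivative exists only when the minimizing curve is unique, and it equals $-\partial L/\partial v (x,\dot\gamma_{x,y_i}(0),0)$. Strict convexity of $L$ in $v$ forces $\dot\gamma_{x,y_1}(0)=\dot\gamma_{x,y_2}(0)$. As noted in the paragraph preceding the theorem, each $\gamma_{x,y_i}$ is then an Euler--Lagrange trajectory; two such trajectories with the same initial condition $(x,\dot\gamma(0))$ at time $0$ coincide by uniqueness of the flow, so evaluating at time $1$ gives $y_1=y_2$.

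For the left twist condition, the argument is symmetric but uses the \emph{backward} flow. Fix $y$ and suppose $\partial c_L/\partial y(x_1,y)=\partial c_L/\partial y(x_2,y)$. The same proposition identifies both with $\partial L/\partial v(y,\dot\gamma_{x_i,y}(1),0)$, so strict convexity yields $\dot\gamma_{x_1,y}(1)=\dot\gamma_{x_2,y}(1)$. The completeness hypothesis allows us to integrate the Euler--Lagrange flow backwards from the common terminal datum $(y,\dot\gamma_{x_1,y}(1))$ at time $1$, and uniqueness then gives $\gamma_{x_1,y}=\gamma_{x_2,y}$, hence $x_1 = x_2$.

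I do not anticipate a real obstacle: the whole argument is bookkeeping once one combines the formulas for the super-differential, the fiberwise injectivity of $\partial L/\partial v$, and the Cauchy--Lipschitz theorem for the Euler--Lagrange ODE. The only point that deserves care is to invoke the previous proposition correctly, namely that the existence of a one-sided partial of $c_L$ is exactly what lets one assert uniqueness of the minimizer $\gamma_{x,y}$ and, therefore, its characterization as a genuine Euler--Lagrange trajectory to which the flow uniqueness applies.
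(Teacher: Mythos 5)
Your argument is correct and is essentially the same as the paper's (which compresses it into the sentence preceding the theorem and then says ``As a corollary, we have''): use the super-differential formula to read off $\partial L/\partial v$ at the appropriate endpoint, invoke fiberwise injectivity of $\partial L/\partial v$ from strict convexity to recover the boundary velocity, and conclude via uniqueness/completeness of the Euler--Lagrange flow, forward for the right twist and backward for the left twist.
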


We may now compute the skew Legendre transforms (when they exist).
From the previous results we have the following:
\begin{multline*}
\forall (x,y)\in\mathcal{D}( \Lambda_c^l),\  \Lambda_c^l(x,y)= \left( x,-\frac{\partial c}{\partial
   x}(x,y)\right)\\
   = \left( x,\frac{\partial L}{\partial v} (x,\dot{\gamma}_{x,y}(0),0)\right)=\mathcal{L}_L(x,\dot{\gamma}_{x,y}(0),0),
   \end{multline*}
   \begin{multline*}
\forall (x,y)\in\mathcal{D}( \Lambda_c^r),\  \Lambda_c^r(x,y)= \left( y,\frac{\partial c}{\partial
   y}(x,y)\right)\\
   = \left( y,\frac{\partial L}{\partial v} (y,\dot{\gamma}_{x,y}(1),0)\right)=\mathcal{L}_L(y,\dot{\gamma}_{x,y}(1),0),
   \end{multline*}
   where we recall that the mapping $\mathcal{L}_L$ is the classical Legendre transform from $TM$ to $T^*M$ defined by 
   $$\forall (x,v,t)\in TM\times \R{},\ \mathcal{L}_L(x,v,t)=\left(x,\frac{\partial L}{\partial v}(x,v,t)\right).$$
\\
Finally, let us study the partial dynamics for the cost $c_L$. Let $(x,y)\in M \times M$ be such that  $\partial c_L / \partial y(x,y)$ exists, let us compute (if it exists) $\varphi_{+1}(x,y)$. We are looking for a $z$ such that
$$ \frac{\partial c_L}{\partial y}(x,y)=-\frac{\partial c_L}{\partial x}(y,z),$$
where all the partial derivatives exist, that is, using the previous notations,
$$\frac{\partial L}{\partial v} (\gamma_{x,y}(1),\dot{\gamma}_{x,y}(1),0)= \frac{\partial L}{\partial v} (\gamma_{y,z}(0),\dot{\gamma}_{y,z}(0),0),$$
which proves, since $\partial L/\partial v$ is injective, that $\dot{\gamma}_{x,y}(1)=\dot{\gamma}_{y,z}(0)$.
Moreover, since all the above curves are minimizers, they are trajectories of the Euler-Lagrange flow of $L$ which we denote by $\varphi_L$. To put it all in a nutshell, if $z$ exists, then 
$$(z,\dot{\gamma}_{y,z}(1),1)=\varphi^1_L(y,\dot{\gamma}_{x,y}(1),1)=\varphi^2_L(x,\dot{\gamma}_{x,y}(0),0).$$
From this discussion, we obtain the following result:
\begin{pr}
The point $(y,z)=\varphi_{+1}(x,y)$ exists if and only if the trajectory $\gamma$ defined by
$$\forall s\in [0,1], \ (\gamma(s),\dot{\gamma}(s),s)=\varphi_L(s)(y,\dot{\gamma}_{x,y}(1),1)$$
is the only action minimizing curve between $y$ and $\gamma(1)=z$ (defined on a time interval of length $1$).
\end{pr}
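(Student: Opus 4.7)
The plan is to unfold what ``$(y,z)=\varphi_{+1}(x,y)$ exists'' means concretely and then match it against the Euler--Lagrange description of the trajectory $\gamma$. By the definition of the partial dynamics, this existence is equivalent to the simultaneous existence of $\partial c_L/\partial y(x,y)$ and $\partial c_L/\partial x(y,z)$ together with the discrete Euler--Lagrange identity
$$\frac{\partial c_L}{\partial y}(x,y)+\frac{\partial c_L}{\partial x}(y,z)=0.$$
The preceding propositions identify each of these derivatives (when they exist) with a value of the fiber-derivative $\partial L/\partial v$ at an endpoint velocity of the unique associated action-minimizer, so the whole proof reduces to switching back and forth between the Euler--Lagrange and Legendre descriptions.

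For the direct implication, assume $\varphi_{+1}(x,y)=(y,z)$ exists. Then both $\gamma_{x,y}$ and $\gamma_{y,z}$ are unique action-minimizers, hence Euler--Lagrange trajectories. Substituting
$\partial c_L/\partial y(x,y)=\partial L/\partial v\bigl(y,\dot\gamma_{x,y}(1),0\bigr)$ and
$\partial c_L/\partial x(y,z)=-\partial L/\partial v\bigl(y,\dot\gamma_{y,z}(0),0\bigr)$
into the identity above, and invoking injectivity of $\partial L/\partial v$ on each fiber (from strict convexity of $L$), yields $\dot\gamma_{x,y}(1)=\dot\gamma_{y,z}(0)$. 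Consequently $\gamma_{y,z}$ and $\gamma$ are Euler--Lagrange trajectories with the same initial condition at $s=0$ and therefore coincide by uniqueness of the flow, so $\gamma|_{[0,1]}$ is the unique action-minimizer between $y$ and $\gamma(1)=z$.

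For the converse, assume $\gamma|_{[0,1]}$ is the unique action-minimizer between $y$ and $z:=\gamma(1)$. The crucial step is to deduce that $\partial c_L/\partial x(y,z)$ exists. I would use the local semi-concavity of $c_L$: for $x\mapsto c_L(x,z)$ the super-differential at $y$ is non-empty, and for Tonelli costs each action-minimizer between $y$ and $z$ contributes the covector $-\partial L/\partial v\bigl(y,\dot\gamma'(0),0\bigr)$, with the super-differential being the convex hull of these covectors (a standard identification, treated in the appendix of \cite{fatfig07}). Uniqueness of the minimizer then forces the super-differential to be a singleton, and local semi-concavity upgrades this to differentiability. Once $\partial c_L/\partial x(y,z)$ exists, the relation $\dot\gamma(0)=\dot\gamma_{x,y}(1)$ (built into the definition of $\gamma$ via the Euler--Lagrange flow and the time-periodicity of $L$) gives
$$-\frac{\partial c_L}{\partial x}(y,z)=\frac{\partial L}{\partial v}\bigl(y,\dot\gamma_{x,y}(1),0\bigr)=\frac{\partial c_L}{\partial y}(x,y),$$
which is exactly the Euler--Lagrange identity required for $(y,z)=\varphi_{+1}(x,y)$.

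The main obstacle is the assertion in the converse that uniqueness of the action-minimizer implies differentiability of $c_L$ in the first variable. The reverse implication (differentiability $\Rightarrow$ uniqueness) is explicit in the propositions recalled in this section, but the direction needed here rests on the identification of the full super-differential of $c_L$ with the convex hull of minimizer-covectors, which is classical for Tonelli value functions and will be invoked rather than re-derived.
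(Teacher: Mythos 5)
Your ``only if'' direction is the same as the paper's (the paper treats it as already established by the discussion preceding the proposition and only writes out the ``if'' part). For the ``if'' direction, the high-level goal is identical --- show that uniqueness of the minimizer forces differentiability of $c_L$ at $(y,z)$ --- but the implementation differs. The paper proves this by hand: using the a priori compactness lemma and the uniqueness hypothesis, it shows that any sequence of minimizers from nearby endpoints $(y_n,z_n)$ converges (with derivatives) to the unique minimizer $\gamma$, hence that $\d_{(y_n,z_n)}c_L$ converges to a single limit whenever $c_L$ is differentiable at $(y_n,z_n)$; it then invokes the property of the Clarke super-differential of a locally semi-concave function (that it is the convex hull of limits of gradients at nearby differentiability points) to conclude that the super-differential at $(y,z)$ is a singleton, i.e.\ that $c_L$ is differentiable there. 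You instead invoke, as a black box, the classical identification of the super-differential of a Tonelli cost with the convex hull of the endpoint covectors of action minimizers. These two are really the same fact: the paper's compactness argument combined with the Clarke characterization is precisely a self-contained proof of the identification you cite, specialized to this situation. Your route is shorter and cleaner, at the price of not being self-contained within the paper; the paper's route re-proves the needed inclusion from scratch. One small imprecision to tighten: you phrase the argument in terms of the super-differential of the partial function $x\mapsto c_L(x,z)$, but the characterization in \cite{fatfig07} is for the full super-differential of $c_L$ at $(y,z)$; it is cleaner (and is what the paper does) to conclude full differentiability of $c_L$ at $(y,z)$ directly and then take the $x$-component, rather than work with the partial super-differential, whose relation to the projection of the full one requires a word of justification.
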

\begin{proof}
It only remains to prove the "if" part, therefore, let us assume that $\gamma$ is the only action minimizing curve between $y=\gamma(0)$ and $z=\gamma(1)$.
\\
 We first prove that if $(y_n,z_n)_{n\in \mathbb{N}}$ is a sequence converging to $(y,z)$ and if $(\gamma_n)_{n\in \mathbb{N}}$ verifies, $\gamma_n(0)=y_n$, $\gamma_n(1)=z_n$ and 
 $$\forall n\in \mathbb{N},\  c_L(y_n,z_n)=\int_{0}^1L(\gamma_n(s),\dot{\gamma_n}(s),s)\d s,$$
 then the $(\gamma_n,\dot{\gamma}_n)$ converge uniformly to $(\gamma,\dot{\gamma})$ when $n\to +\infty$.\\
 As a matter of fact, since $M$ is compact and the $\gamma_n$ are action minimizing curves defined for length time of $1$, by the a priori compactness lemma (see \cite{Fa}), the sequence $(\gamma_n(0),\dot{\gamma}_n(0)) _{n\in \mathbb{N}}$ is bounded. We obtain, by continuity of the Euler-Lagrange flow that the sequence of functions, $(\gamma_n,\dot{\gamma}_n) _{n\in \mathbb{N}}$ is relatively compact for the compact open topology. Therefore we only have to prove that any converging subsequence converges to  $(\gamma,\dot{\gamma})$. Up to an extraction, let us assume that  $(\gamma_n,\dot{\gamma}_n)$ converges to some $(\delta,\dot{\delta})\in TM^{[0,1]}$. By continuity of the Euler-Lagrange flow, we necessarily have 
 $$\forall s\in[0,1], \ (\delta(s),\dot{\delta}(s),s)=\varphi_L(s)(\delta(0),\dot{\delta}(0),0).$$
 By continuity of the function $c_L$, we therefore obtain that 
 $$c_L(y,z)=\int_{0}^1L(\delta(s),\dot{\delta}(s),s)\d s$$
 which proves that $\delta=\gamma$ by uniqueness of $\gamma$ and therefore that the $(\gamma_n,\dot{\gamma}_n)$ converge uniformly to $(\gamma,\dot{\gamma})$.
 \\
 As a direct corollary of the previous result, we have that if $(y_n,z_n)_{n\in \mathbb{N}}$ is a sequence converging to $(y,z)$ and such that $c_L$ is differentiable at each $(y_n,z_n)$, then 
 \begin{multline*}
 \lim_{n\to +\infty}\d_{(y_n,z_n)}c_L\\
 =\lim_{n\to +\infty}  \frac{\partial L}{\partial v} (z_n,\dot{\gamma_n}_{y_n,z_n}(1),1)\d y-\frac{\partial L}{\partial v} (y_n,\dot{\gamma_n}_{y_n,z_n}(0),0)\d x\\
 = \frac{\partial L}{\partial v} (z,\dot{\gamma}_{y,z}(1),1)\d y-\frac{\partial L}{\partial v} (y,\dot{\gamma}_{y,z}(0),0)\d x.
 \end{multline*}
 Since $c_L$ is a locally semi-concave function, it follows from basic properties of the Clarke super-differential (\cite{clark})  that $c_L$ is differentiable at $(x,y)$.
\end{proof}
As an immediate corrolary we obtain the following result that has been widely known for some time but, to our knowledge, never written (\cite{Faper}):
\begin{co}
For a cost coming from a Lagrangian, let $(x,y)\in M \times M$, if either $\partial c_L/\partial x(x,y)$ or $\partial c_L /\partial y (x,y)$ exists then $c_L$ is in fact differentiable at $(x,y)$.
\end{co}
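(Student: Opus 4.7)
The plan is to observe that existence of one partial derivative already forces uniqueness of the minimizing curve, and then to invoke essentially the same compactness/convergence argument used in the proof of the preceding proposition. Suppose, for instance, that $\partial c_L/\partial y(x,y)$ exists. By the superdifferential formula recalled just above, this partial derivative must coincide with $\partial L/\partial v(y,\dot{\gamma}_{x,y}(1),0)$ for every action-minimizing curve $\gamma_{x,y}$ joining $x$ to $y$; since $\partial L/\partial v$ is injective in each fiber by $C^2$-strict convexity of $L$, the terminal velocity $\dot{\gamma}_{x,y}(1)$ is uniquely determined. Because minimizers are trajectories of the (complete) Euler-Lagrange flow, this pins down $\gamma_{x,y}$ itself.

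From here, I would rerun the compactness argument from the preceding proof on a sequence $(x_n,y_n)\to (x,y)$ chosen so that $c_L$ is differentiable at each $(x_n,y_n)$; such sequences exist in every neighborhood of $(x,y)$ because locally semi-concave functions are locally Lipschitz, hence differentiable almost everywhere by Rademacher's theorem. The a priori compactness lemma gives relative compactness of $(\gamma_{x_n,y_n},\dot{\gamma}_{x_n,y_n})_n$ in the compact-open topology, while continuity of the Euler-Lagrange flow and of $c_L$ force every subsequential limit to be an action minimizer between $x$ and $y$; uniqueness then forces the whole sequence to converge in $C^1$ to $\gamma_{x,y}$. The explicit formula
$$\d_{(x_n,y_n)} c_L=\tfrac{\partial L}{\partial v}(y_n,\dot{\gamma}_{x_n,y_n}(1),0)\,\d y-\tfrac{\partial L}{\partial v}(x_n,\dot{\gamma}_{x_n,y_n}(0),0)\,\d x$$
therefore converges to a single linear form $\ell$ determined by the endpoints and velocities of $\gamma_{x,y}$.

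To conclude, I would appeal to the standard characterization of differentiability for locally semi-concave functions: the Clarke superdifferential at $(x,y)$ is the closed convex hull of all limits of gradients $\d_{(x_n,y_n)}c_L$ taken along sequences of differentiability points converging to $(x,y)$, and a semi-concave function is differentiable at a point precisely when this superdifferential is a singleton. Since every such limit equals the single form $\ell$, the singleton condition holds, and $c_L$ is differentiable at $(x,y)$. The case where only $\partial c_L/\partial x(x,y)$ is assumed to exist is entirely symmetric, the initial velocity $\dot{\gamma}_{x,y}(0)$ playing the role of $\dot{\gamma}_{x,y}(1)$.

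The only delicate point is the first step, namely extracting uniqueness of the minimizer from the existence of just one partial derivative; once this is established, the remainder is essentially a restatement of what is already proved in the preceding proposition, which is why the corollary has a short proof.
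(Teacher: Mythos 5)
Your proof is correct and follows exactly the route the paper intends: deduce uniqueness of the minimizing curve from the superdifferential formula together with strict convexity of $L$ (which the paper establishes in the discussion preceding the twist-condition theorem), then invoke the compactness argument and the Clarke superdifferential characterization from the proof of the preceding proposition. The paper leaves this as an "immediate corollary" precisely because both ingredients are already on the page, and you have reassembled them in the intended way.
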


In the Lagrangian case, the partial dynamic $\varphi_{+1}$ may be recovered from the restriction of the Euler-Lagrange flow, $\varphi_L^1$ to the right subset. Of course, the same holds for the negative time dynamic $\varphi_{-1}$ which is closely related to the restriction to some set of $\varphi^{-1}_L$.

\section{Existence of $C^{1,1}_{loc}$ critical subsolutions}\label{existence}
 We will now suppose $c$ verifies the left and
right twist conditions. Our goal from now on will be to construct more
regular strict subsolutions.
\begin{pr}\label{dif}
 Let $u\<c+\alpha [0]$ be a dominated function and $(x_1,x_2,x_3)$ be a
 calibrated chain, then $u$ is differentiable at
 $x_2$. Moreover,
  $$\d_{x_2}u=\frac{\partial c}{\partial
   y}(x_1,x_2)=-\frac{\partial c}{\partial x}(x_2,x_3).$$
\end{pr}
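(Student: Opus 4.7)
The plan is to sandwich $u$ between two functions, one locally semi-concave and one locally semi-convex, which agree up to first order at $x_2$, and then read off both the existence of $d_{x_2} u$ and its value.

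First I would unpack what calibration means. From the definition of a $(u,c,\alpha[0])$-calibrated chain applied to $(x_1,x_2,x_3)$, combined with the two domination inequalities $u(x_2)-u(x_1)\leqslant c(x_1,x_2)+\alpha[0]$ and $u(x_3)-u(x_2)\leqslant c(x_2,x_3)+\alpha[0]$, equality must hold in each:
$$u(x_2)=u(x_1)+c(x_1,x_2)+\alpha[0],\qquad u(x_3)=u(x_2)+c(x_2,x_3)+\alpha[0].$$
Combining this with domination $u(y)\leqslant u(x_1)+c(x_1,y)+\alpha[0]$ and $u(y)\geqslant u(x_3)-c(y,x_3)-\alpha[0]$ for arbitrary $y\in M$, I would introduce the two functions
$$f(y)=u(x_1)+c(x_1,y)+\alpha[0],\qquad g(y)=u(x_3)-c(y,x_3)-\alpha[0],$$
so that $g\leqslant u\leqslant f$ on $M$ with $g(x_2)=u(x_2)=f(x_2)$.

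The key observation is that $f$ is locally semi-concave (as $c(x_1,\cdot)$ is), while $-g$ is locally semi-concave (as $c(\cdot,x_3)$ is, so $-c(\cdot,x_3)$ plays that role for $g$, i.e. $g$ is locally semi-convex). Hence $f-g=f+(-g)$ is locally semi-concave; it is non-negative and vanishes at $x_2$, which is therefore a local minimum. Applying part (i) of the differentiability proposition gives that $f-g$ is differentiable at $x_2$ with $\d_{x_2}(f-g)=0$. Applying part (ii) to the decomposition $f+(-g)$, both $f$ and $-g$ are differentiable at $x_2$, with
$$\d_{x_2}f=\d_{x_2}g=:p.$$
Since the differentiability of $f$ at $x_2$ is exactly the existence of $\partial c/\partial y(x_1,x_2)$ (with $p=\partial c/\partial y(x_1,x_2)$), and the differentiability of $g$ at $x_2$ is exactly the existence of $\partial c/\partial x(x_2,x_3)$ (with $p=-\partial c/\partial x(x_2,x_3)$), the claimed identity $\partial c/\partial y(x_1,x_2)=-\partial c/\partial x(x_2,x_3)$ follows.

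Finally, working in a chart around $x_2$, the differentiability of $f$ and $g$ with common differential $p$ and common value $u(x_2)$ gives $f(x_2+h)=u(x_2)+p(h)+o(\|h\|)$ and $g(x_2+h)=u(x_2)+p(h)+o(\|h\|)$. The sandwich $g\leqslant u\leqslant f$ then forces $u(x_2+h)=u(x_2)+p(h)+o(\|h\|)$, which is exactly the differentiability of $u$ at $x_2$ with $\d_{x_2}u=p$. The only real subtlety is remembering that $u$ itself carries no regularity assumption: all the regularity input flows from the semi-concavity of $c$ via $f$ and $g$, and the squeeze argument is what transfers it to $u$.
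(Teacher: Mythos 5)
Your proof is correct and follows essentially the same route as the paper: you introduce the same two functions (called $\varphi$ and $\psi$ there), observe the same sandwich $\psi\leqslant u\leqslant\varphi$ with equality at $x_2$, apply parts (i) and (ii) of the differentiability proposition to $\varphi-\psi$, and squeeze to get differentiability of $u$. The only difference is that you spell out the intermediate steps (deriving the two calibration equalities, and the Taylor-expansion squeeze in a chart) that the paper leaves implicit.
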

\begin{proof}
 By definition of domination, the following inequalities hold:
$$\forall x\in M,\ u(x_1)+c(x_1,x)+\alpha [0]\geqslant u(x) \geqslant
 u(x_3)-c(x,x_3)-\alpha [0]$$ 
where both inequalities are equalities
 at $x_2$. Define the functions 
$$\varphi (x)=u(x_1)+c(x_1,x)+\alpha
 [0]\; \mathrm{ and}\; \psi (x)=u(x_3)-c(x,x_3)-\alpha [0].$$
 Clearly, $\varphi$ is
 locally semi-concave and $\psi$ is locally semi-convex,
 $\varphi\geqslant \psi$ with equality at $x_2$. The function
 $\varphi-\psi$ is always non-negative and vanishes at $x_2$ (which is
 a global minimum). Moreover, it is locally semi-concave therefore it is
 differentiable at $x_2$ and $\d_{x_2}(\varphi-\psi)=0$. Finally,
 since both $\varphi$ and $-\psi$ are locally semi-concave, both of
 them are differentiable at $x_2$ and from the inequalities $\varphi\geqslant
 u\geqslant \psi$ we deduce that $u$ is differentiable at $x_2$ with
 $\d_{x_2}u=\d_{x_2}\varphi =\d_{x_2}\psi$.
\end{proof}
As a corollary we have the following:
\begin{co}\label{newco}
Suppose $c$ satisfies the right and left twist conditions. If $u:M\to \R{}$ is a critically dominated function and $x\in \A_u$, then $\d_x u$ exists. Moreover there is a unique point $x_1$ such that $\partial c/\partial x (x,x_1)$ exists and verifies 
$$\d_{x}u=-\frac{\partial c}{\partial x}(x,x_1).$$
This point $x_1$ is also the unique point such that $(x,x_1)\in \widehat{\A}_u$. In particular it is necessarily in $\A_u$.\\
In the same way, there is a unique point $x_{-1}$ such that $\partial c/\partial y (x_{-1},x)$ exists and verifies 
$$\d_{x}u=\frac{\partial c}{\partial y}(x_{-1},x).$$
This point $x_{-1}$ is also the unique point such that $(x_{-1},x)\in \widehat{\A}_u$. In particular it is necessarily in $\A_u$.
\end{co}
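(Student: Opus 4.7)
The plan is to derive everything from Proposition \ref{dif} applied to a calibrated subchain through $x$, with uniqueness coming from the twist conditions.

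Since $x\in \A_u$, by definition there exists a $(u,c,\alpha[0])$-calibrated sequence $(y_n)_{n\in\mathbb{Z}}$ with $y_0=x$. Any three consecutive elements form a calibrated chain, so applying Proposition \ref{dif} to the subchain $(y_{-1},y_0,y_1)=(y_{-1},x,y_1)$ yields that $u$ is differentiable at $x$ with
$$\d_x u=\frac{\partial c}{\partial y}(y_{-1},x)=-\frac{\partial c}{\partial x}(x,y_1).$$
Setting $x_1:=y_1$ and $x_{-1}:=y_{-1}$, we have produced points satisfying the required identities; moreover, by definition, $(x,x_1)\in\widehat{\A}_u$ and $(x_{-1},x)\in\widehat{\A}_u$, and both points are in $\A_u$ since they sit on a calibrated bi-infinite sequence (shifting the indices by $\pm 1$ again yields a calibrated sequence).

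For uniqueness, suppose $x_1'$ is any point for which $\partial c/\partial x(x,x_1')$ exists and $\d_x u=-\partial c/\partial x(x,x_1')$. Then $\partial c/\partial x(x,x_1)=\partial c/\partial x(x,x_1')$, so the right twist condition (which says $y\mapsto \partial c/\partial x(x,y)$ is injective on its domain) forces $x_1'=x_1$. Next, suppose $x_1''$ is any point with $(x,x_1'')\in\widehat{\A}_u$: there is a calibrated sequence $(z_n)$ with $z_0=x$, $z_1=x_1''$, and Proposition \ref{dif} applied to the subchain $(z_{-1},x,x_1'')$ gives $\d_x u=-\partial c/\partial x(x,x_1'')$, so $\partial c/\partial x(x,x_1'')$ exists and equals $\partial c/\partial x(x,x_1)$; the right twist condition again forces $x_1''=x_1$. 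This also shows $x_1\in \A_u$, since $x_1''$ belongs to a calibrated sequence.

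The statement for $x_{-1}$ is proved verbatim by the symmetric argument, replacing the right twist condition by the left twist condition (injectivity of $x\mapsto \partial c/\partial y(x,y)$ for fixed $y$). No step presents a real obstacle: the existence and the formula are a direct application of Proposition \ref{dif} to the calibrated triple, and uniqueness is exactly the content of the twist hypothesis.
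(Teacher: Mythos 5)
Your proof is correct and is essentially the argument the paper has in mind: the paper presents the result simply as a corollary of Proposition \ref{dif} without writing out the details, and your argument is exactly the natural way to fill them in. You correctly take a bi-infinite calibrated sequence through $x$ (which exists by the definition of $\A_u$), apply Proposition \ref{dif} to the triple $(y_{-1},x,y_1)$ to get the differential formula, read off $(x,x_1)\in\widehat{\A}_u$ and $x_1\in\A_u$ from the shifted sequence, and obtain uniqueness from the right (resp.\ left) twist condition; this matches the way the paper itself later uses the corollary in the proof of the graph theorem.
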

\begin{Th}[Mather's graph theorem]
 Let $u\<c+\alpha [0]$ be a dominated function then $u$ is
 differentiable on $\A$. Moreover, the differential of $u$ is
 independent of the dominated function $u$. In particular, the canonical
  projections from $\AA$ to $\A$ and from $\widehat{\A}$ to $\A$ are bijective. 
\end{Th}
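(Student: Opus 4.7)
The plan is to combine Proposition \ref{dif} and Corollary \ref{newco} with the definitional facts that $\widehat{\A} = \bigcap_{u \< c+\alpha[0]} \widehat{\A}_u$ and $\A \subset \A_u$ for every critically dominated $u$. Differentiability on $\A$ is then immediate: fix any dominated $u$ and any $x \in \A$; since $x \in \A_u$, Corollary \ref{newco} gives the existence of $\d_x u$.

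For the independence of $\d_x u$ from the choice of $u$, the key observation is that the successor $x_1$ produced by Corollary \ref{newco} can be selected uniformly in $u$. I would apply Corollary \ref{newco} to the function $u_0$ of Theorem \ref{aubrymax}, which satisfies $\widehat{\A}_{u_0} = \widehat{\A}$: for $x \in \A \subset \A_{u_0}$, this yields a unique point $x_1$ with $(x,x_1) \in \widehat{\A}_{u_0} = \widehat{\A}$. For any other dominated $u$, the inclusion $\widehat{\A} \subset \widehat{\A}_u$ gives $(x,x_1) \in \widehat{\A}_u$; by the uniqueness part of Corollary \ref{newco} applied to $u$, this same $x_1$ coincides with the unique successor that $u$ produces at $x$. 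Extracting a calibrated sub-chain $(x_{-1},x,x_1)$ from a bi-infinite sequence in $\AA_u$ passing through $x$ and invoking Proposition \ref{dif}, we obtain
$$\d_x u = -\frac{\partial c}{\partial x}(x,x_1),$$
and since $x_1$ depends only on $x$ (through the global Aubry set), this value is independent of $u$.

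Bijectivity of the two projections comes from the same uniqueness principle. The projection $\widehat{\A} \to \A$ is surjective by the definition of $\A$ and injective because for each $x \in \A$ the successor $x_1$ with $(x,x_1) \in \widehat{\A}$ is unique. For the projection $\AA \to \A$, I would iterate: any sub-chain of consecutive elements of an element of $\AA$ is itself calibrated, so each coordinate $x_i$ of a sequence in $\AA$ lies in $\A$; applying the successor/predecessor uniqueness forwards and backwards, the entire bi-infinite sequence is determined by $x_0$.

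The main obstacle is the independence of $\d_x u$ from $u$, and the resolution is the use of $u_0$ from Theorem \ref{aubrymax} together with the inclusion $\widehat{\A} \subset \widehat{\A}_u$, which allows one to pin down the successor $x_1$ intrinsically to the global Aubry set before reading the differential off it via Proposition \ref{dif}.
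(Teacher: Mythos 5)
Your proof is correct and follows essentially the same route as the paper's: differentiability via Corollary \ref{newco}, independence of the differential by pinning the successor (or predecessor) to the global Aubry set, and bijectivity of the projections from the twist condition. Your explicit invocation of $u_0$ from Theorem \ref{aubrymax} together with the inclusion $\widehat{\A}\subset\widehat{\A}_u$ in fact supplies a justification for the step the paper's terse proof leaves implicit, namely that every $x\in\A$ admits a calibrated sequence in the \emph{global} Aubry set $\AA$ passing through it.
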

\begin{proof}
 The first part is a straightforward consequence of the previous
 corollary (\ref{newco}). To prove the second part, notice that if $x\in \A$ then there
 is a sequence $(x_n)_{n\in \mathbb{Z}}\in \AA$ with
 $x_0=x$. Therefore, $\d_{x}u=\partial c/\partial y (x_{-1},x)$
 which is independent from $u$. The last part is now a straightforward
 consequence of the twist condition.
\end{proof}
\begin{rem}\rm
Originally, in \cite{Ma2}, Mather obtains in his graph theorem that the projection, from the Aubry set to the projected Aubry set, is a bi-Lipschitz homomorphism. In the previous theorem, this is not necessarily the case, due to the fact that in the general framework we propose, the Skew Legendre transforms need not be bi-Lipschitz on their domain of definition. We will however give a bi-Lipschitz version of the graph theorem at the end of this section (see \ref{graphbis}). 
\end{rem}

We now would like to obtain some   regularity results  about the differential of $u$ on $\A_u$. One way to obtain that is to look for a $u$ which is locally
semi-concave. Here is a lemma that will help us to do so.
\begin{pr}
 If $u\<c+\alpha [0]$ then $\ttt u$ is locally semi-concave.
\end{pr}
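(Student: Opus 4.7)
The plan is to express $\ttt u$ locally as an infimum of a family of semi-concave functions sharing a common modulus, and then invoke the standard fact that such an infimum is itself semi-concave.

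\textbf{A linear lower bound on $u$.} Fix $x_0\in M$. By completeness of $M$, any $y\in M$ can be joined to $x_0$ by a geodesic, which we subdivide into pieces of length at most $1$. This yields a chain $x_0=z_0,z_1,\dots,z_n=y$ with $\d(z_i,z_{i+1})\leq 1$ and $n\leq \d(x_0,y)+1$. Applying domination $u(z_i)-u(z_{i+1})\leq c(z_{i+1},z_i)+\alpha[0]$ together with uniform boundedness $c(z_{i+1},z_i)\leq A(1)$ at each step and summing gives
$$u(y)\geq u(x_0)-n\bigl(A(1)+\alpha[0]\bigr)\geq -C_1-C_2\,\d(y,x_0),$$
for constants $C_1,C_2>0$ depending only on $x_0$.

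\textbf{Compactness of near-minimizers.} Fix a precompact open neighborhood $U\subset M$ of $x_0$ contained in a single chart. The inequality $\ttt u(x)\leq u(x_0)+c(x_0,x)\leq u(x_0)+A(\diam(U))$ shows $\ttt u$ is bounded above on $U$ by some constant $M$. Combining the linear lower bound on $u$ with uniform super-linearity of $c$ applied with some $k>C_2$, and with $\d(y,x)\geq \d(y,x_0)-\diam(U)$ for $x\in U$, one obtains
$$u(y)+c(y,x)\geq (k-C_2)\,\d(y,x_0)+\mathrm{const},\qquad x\in U.$$
For $\d(y,x_0)$ larger than some radius $R$, the right-hand side exceeds $M+1$; thus the infimum defining $\ttt u(x)$ is realized when $y$ is restricted to the closed ball $K$ of radius $R$ around $x_0$, which is compact by Hopf--Rinow. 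Hence $\ttt u(x)=\inf_{y\in K}\bigl[u(y)+c(y,x)\bigr]$ for all $x\in U$.

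\textbf{Uniform semi-concavity and conclusion.} Local semi-concavity of $c$ on $M\times M$ together with compactness of $K$ yields, by a standard finite subcover argument applied to $K\times\{x_0\}$, a smaller neighborhood $U'\subset U$ of $x_0$ and a constant $C>0$ such that for every $y\in K$ the map $x\mapsto c(y,x)$ is $C$-semi-concave on $U'$. The family $\{x\mapsto u(y)+c(y,x):y\in K\}$ is then uniformly $C$-semi-concave on $U'$, and one verifies directly from the definition that the pointwise infimum of such a family is $C$-semi-concave. Hence $\ttt u$ is $C$-semi-concave on $U'$, and since $x_0$ was arbitrary it is locally semi-concave.

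The principal technical point is the confinement of the infimum to a compact set. Once $u$ is known to decrease at most linearly with $\d(y,x_0)$, uniform super-linearity of $c$ beats this linear rate and traps the infimum on a compact, after which uniform semi-concavity follows by routine considerations.
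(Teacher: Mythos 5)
Your proof is correct and fills in precisely the argument that the paper only sketches: the paper states that $\ttt u$ is ``locally a finite infimum of equi-locally semi-concave functions'' and refers to \cite{fatfig07} and \cite{Za} for details, and your three steps (linear lower bound on $u$ via domination and uniform boundedness, confinement of the infimum to a compact ball via superlinearity, and a finite-subcover argument to get a uniform semi-concavity constant on that compact) are exactly the standard way to make that sketch rigorous. Same approach, just worked out in full.
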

\begin{proof}
The proof actually goes along the same lines as the proof that
the image of a dominated function is continuous. The function $\ttt u$
is locally a finite infimum of equi-locally semi-concave functions and
is therefore itself locally semi-concave. For more details, see \cite{fatfig07} or \cite{Za}.
\end{proof}
The next proposition shows that in order to achieve our goal,  we can consider $\ttt u$ instead of $u$. Let us recall that by \ref{egalite aubry} we have $\A_u=\A_{\ttt
   u}$ as soon as $u$ is dominated. Here is a complement when $c$ is locally semi-concave.
\begin{lm}\label{err}
 Let $u\<c+\alpha [0]$ be a dominated function, then  if $x\notin \A_u$ and $\tilde{x} \in M$ verifies $\ttt
 u(x)=u(\tilde{x})+c(\tilde{x},x)$ then $\tilde{x}\notin \A_u=\A_{\ttt u}$. \\
 If $\tilde{x} \in M$ verifies $\TTT
 u(x)=u(\tilde{x})-c(x,\tilde{x})$ then $\tilde{x}\notin \A_u=\A_{\TTT u}$.
\end{lm}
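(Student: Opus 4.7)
My plan is to prove each statement by contraposition. For (a), I assume $\tilde x\in\A_u$ and deduce $x\in\A_u$; part (b) then follows by the symmetric argument with $\ttt$ replaced by $\TTT$, chain directions reversed, and the right twist condition replaced by the left.

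Since $\tilde x\in\A_u=\A_{\ttt u}$ by Proposition \ref{egalite aubry}, fix a calibrated chain $(\tilde x_n)_{n\in\mathbb Z}\in\AA_u=\AA_{\ttt u}$ with $\tilde x_0=\tilde x$. The calibration identity $u(\tilde x)=u(\tilde x_{-1})+c(\tilde x_{-1},\tilde x)+\alpha[0]$ combined with the definition of $\ttt u$ gives $\ttt u(\tilde x)+\alpha[0]\leqslant u(\tilde x_{-1})+c(\tilde x_{-1},\tilde x)+\alpha[0]=u(\tilde x)$, and the reverse inequality from domination then forces $u(\tilde x)=\ttt u(\tilde x)+\alpha[0]$. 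Plugging this into the hypothesis $\ttt u(x)=u(\tilde x)+c(\tilde x,x)$ and concatenating with the $\ttt u$-calibration $\ttt u(\tilde x)=\ttt u(\tilde x_{-1})+c(\tilde x_{-1},\tilde x)+\alpha[0]$ yields
$$\ttt u(x)=\ttt u(\tilde x_{-1})+c(\tilde x_{-1},\tilde x)+c(\tilde x,x)+2\alpha[0],$$
so that $(\tilde x_{-1},\tilde x,x)$ is a $(\ttt u,c,\alpha[0])$-calibrated chain.

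Proposition \ref{dif} then guarantees that $\ttt u$ is differentiable at $\tilde x$ with
$$\d_{\tilde x}\ttt u=-\frac{\partial c}{\partial x}(\tilde x,x).$$
On the other hand, Corollary \ref{newco} applied to $\ttt u$ at the Aubry point $\tilde x$ supplies a unique point $y\in\A_{\ttt u}$ for which $\partial c/\partial x(\tilde x,y)$ exists and $\d_{\tilde x}\ttt u=-\partial c/\partial x(\tilde x,y)$, the uniqueness being precisely the content of the right twist condition. Hence $x=y\in\A_{\ttt u}=\A_u$, contradicting $x\notin\A_u$. Part (b) proceeds along exactly the same lines: using the forward step of the Aubry chain through $\tilde x$ one first gets $u(\tilde x)=\TTT u(\tilde x)-\alpha[0]$ and then that $(x,\tilde x,\tilde x_1)$ is $(\TTT u,c,\alpha[0])$-calibrated, so by the left-twist half of Corollary \ref{newco} $x$ must equal the unique Aubry predecessor of $\tilde x$, which lies in $\A_u$. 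I do not expect a real obstacle; the main checks are that $\ttt u$ (resp. $\TTT u$) is itself critically dominated---an easy consequence of $u\prec c+\alpha[0]$---and that the signs in (EL) line up with those in Corollary \ref{newco}.
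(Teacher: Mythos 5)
Your proof is correct, but it takes a different route from the paper's. You transfer the problem to $\ttt u$: from $\tilde x\in\A_u$ you recover $u(\tilde x)=\ttt u(\tilde x)+\alpha[0]$ (this is exactly the first half of Lemma~\ref{trivial}, which you rederive), then concatenate the backward calibration step of an Aubry chain with the hypothesis $\ttt u(x)=u(\tilde x)+c(\tilde x,x)$ to exhibit $(\tilde x_{-1},\tilde x,x)$ as a $(\ttt u,c,\alpha[0])$-calibrated triple, apply Proposition~\ref{dif} to get $\d_{\tilde x}\ttt u=-\partial c/\partial x(\tilde x,x)$, and finish with the uniqueness in Corollary~\ref{newco} applied to $\ttt u$. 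The paper stays with $u$ and argues more directly: since $\ttt u(x)-u(z)\leqslant c(z,x)$ with equality at $z=\tilde x$, and $u$ is differentiable at $\tilde x\in\A_u$, the sub-differential of the locally semi-concave function $z\mapsto c(z,x)$ is nonempty at $\tilde x$, hence $\partial c/\partial x(\tilde x,x)$ exists and equals $-\d_{\tilde x}u$; Corollary~\ref{newco} applied to $u$ then gives $x\in\A_u$, a contradiction. Both routes hinge on the same twist-condition uniqueness in \ref{newco}; yours is a cleaner top-down application of the already-established machinery (\ref{egalite aubry}, \ref{dif}, \ref{newco}), while the paper's is shorter because it exploits the optimality of $\tilde x$ directly, bypassing the construction of a calibrated triple. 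One small housekeeping point your sketch flags but does not spell out: the symmetric equality $\AA_u=\AA_{\TTT u}$ (the analogue of Proposition~\ref{egalite aubry} for $\TTT$) is implicitly used in part (b) and should be invoked explicitly.
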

\begin{proof}
Assume by contradiction $\tilde{x}\in \A_u$. By definition of the Lax-Oleinik semi-group, from 
$$\forall z\in M,\  \ttt u(x)\leqslant u(z)+c(z,x),$$
we obtain that
$$\forall z\in M,\  \ttt u(x) -u(z)\leqslant c(z,x).$$
At $z=\tilde{x}\in \A_u$ the differential $\d_{\tilde{x}} u$ exists, therefore the sub-differential  of the locally semi-concave function $z\mapsto c(z,x)$ is not empty at $\tilde{x}$. This implies that the partial derivative $\partial c/\partial x (\tilde{x},x)$ exists and verifies 
$$\d_{ \tilde{x}}u=-\frac{\partial c}{\partial x} (\tilde{x},x).$$
By corollary \ref{newco}, we have necessarily $x\in \A_u$, a contradiction.\\
The proof of the second part is similar.
\end{proof}
\begin{pr}
 If $u\<c+\alpha [0]$ is a continuous subsolution which is strict outside of $\widehat{\A}_u$ then $\ttt u$ and $\TTT u$  are also subsolutions strict outside of $\widehat{\A}_{\ttt u}=\widehat{\A}_u=\widehat{\A}_{\TTT u}$.
\end{pr}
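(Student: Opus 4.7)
The proof splits into the two statements for $\ttt u$ and for $\TTT u$; the arguments are dual, so I work out the case of $\ttt u$ in detail. First, $\ttt u$ is a critical subsolution: applying the monotone operator $\ttt$ to the inequality $u \leq \ttt u + \alpha[0]$ (property (ii) of subsolutions) yields $\ttt u \leq \ttt^2 u + \alpha[0]$, which is property (ii) for $\ttt u$. The identity $\widehat{\A}_{\ttt u} = \widehat{\A}_u$ is exactly Proposition \ref{egalite aubry}. Only the strictness of $\ttt u$ outside $\widehat{\A}_u$ remains.

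I argue this by contrapositive. Suppose $\ttt u(y) - \ttt u(x) = c(x, y) + \alpha[0]$. Applying the second part of Lemma \ref{trivial} with the roles of $x$ and $y$ exchanged yields
\begin{equation*}
u(x) = \ttt u(x) + \alpha[0], \qquad \ttt u(y) = u(x) + c(x, y).
\end{equation*}
The infimum defining $\ttt u(x)$ is attained at some $x' \in M$ by the standard compactness argument available in this setting, so the first identity reads $u(x) - u(x') = c(x', x) + \alpha[0]$; hence $u$ is not strict at $(x', x)$. By hypothesis, $(x', x) \in \widehat{\A}_u$, which gives $x \in \A_u$, and by Mather's graph theorem $u$ is differentiable at $x$.

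The second identity says that the function $z \mapsto u(z) + c(z, y)$ attains its global minimum at $z = x$, so $0$ belongs to its sub-differential at $x$. Since $u$ is differentiable at $x$ and $c(\cdot, y)$ is locally semi-concave, the sub-differential of the sum at $x$ equals $\d_x u + \partial^-(c(\cdot, y))(x)$. Therefore $-\d_x u \in \partial^-(c(\cdot, y))(x)$, which, by the standard properties of locally semi-concave functions, forces $c(\cdot, y)$ to be differentiable at $x$ with $\partial c/\partial x (x, y) = -\d_x u$. Corollary \ref{newco}, applied at $x \in \A_u$, then identifies $y$ as the unique successor and ensures $(x, y) \in \widehat{\A}_u$, as required. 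The statement for $\TTT u$ is obtained by exactly the same scheme using the dual of Lemma \ref{trivial}, the attainment of the supremum in $\TTT u(y)$, and the predecessor half of Corollary \ref{newco}.

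The main obstacle I anticipate is the step that turns the variational equality $\ttt u(y) = u(x) + c(x, y)$ into the differential identity $\partial c/\partial x(x, y) = -\d_x u$: it requires combining the differentiability of $u$ at points of $\A$ (which is exactly why I first need to establish $x \in \A_u$) with the semi-concavity of $c$ and the correct sub-differential calculus for the sum $u(z) + c(z, y)$ at its minimum. Once this is in hand, the uniqueness provided by the twist condition through Corollary \ref{newco} closes the argument, and the rest is a controlled bookkeeping with Lemma \ref{trivial} and Proposition \ref{egalite aubry}.
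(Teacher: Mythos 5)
Your proof is correct and reaches the intended conclusion, but the second half is organized differently from the paper's. The two proofs agree on the opening move (Lemma \ref{trivial} applied to the tight inequality for $\ttt u$) and on the fact that one endpoint must already lie in $\A_u$; however, for this first step the paper invokes Proposition \ref{newpr} directly, whereas you re-derive that conclusion by hand through attainment of the infimum in $\ttt u(x)$ plus the pairwise strictness hypothesis — this is fine, but \ref{newpr} was tailor-made for exactly this purpose and saves you the compactness argument. The more substantive divergence is in the closing step. The paper deduces from Lemma \ref{err} that the second endpoint also lies in $\A_u$, then chains equalities to show that $u$ itself fails to be strict on the same pair, and finishes by appealing once more to the strictness hypothesis on $u$. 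You instead open up the variational identity $\ttt u(y) = u(x) + c(x,y)$, use differentiability of $u$ at $x\in\A_u$ together with the semi-concavity of $c(\cdot,y)$ to force $\partial c/\partial x(x,y)=-\d_x u$, and then invoke the uniqueness part of Corollary \ref{newco} to land directly on $(x,y)\in\widehat{\A}_u$. This is in effect unfolding the proof of Lemma \ref{err} and extracting more from it: \ref{err} only records that the second endpoint is in the projected Aubry set, whereas the internal argument (as you observed) actually identifies the edge as an Aubry edge via the twist condition. Your route therefore spends one fewer appeal to the strictness of $u$ and is somewhat more direct; the paper's route is more modular in that it reuses \ref{err} and \ref{newpr} as black boxes. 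Two small remarks: first, be careful with the orientation in the written definition of ``strict at $(x,y)$'' — you write ``not strict at $(x',x)$'' where, reading the definition literally, it should be $(x,x')$, though since either ordering forces $x\in\A_u$ this does not affect the argument; second, the dual statement for $\TTT u$ indeed follows the same scheme, but note that the paper only states Lemma \ref{trivial} for $\ttt$, so the ``dual of Lemma \ref{trivial}'' you invoke needs to be written out (it holds by a symmetric argument, as the paper also implicitly assumes).
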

\begin{proof}
 We already know that $\ttt u$ is a subsolution.
Let $(x,x')\in M \times M$ verify $\ttt u(x)-\ttt
 u(x')=c(x',x)+\alpha [0]$. We therefore must have 
$$\ttt u(x')+\alpha
 [0]=u(x')$$
  as seen in \ref{trivial}.

Since $u$ is  continuous and strict outside of $\widehat{\A}_u$, by proposition \ref{newpr} we necessarily have $x'\in \A_u$. Using now that $u(x')=\ttt u(x')+\alpha[0]$, we obtain the fact that
$$\ttt u(x)=u(x')+c(x',x).$$
By \ref{err} we must have $x\in \A_u$ and therefore $\ttt u(x)=u(x)+\alpha[0]$.
To put it all in a nutshell, we obtained that 
$$u(x)-u(x')=c(x',x)+\alpha[0].$$
Since $u$ is strict outside of $\widehat{\A}_u= \widehat{\A}_{\ttt u}$ we finally get that $(x,x')\in \widehat{\A}_{\ttt u}$. 
\\
The proof for $\TTT u$ is the same.
\end{proof}
Using the previous result with \ref{aubrymax} we obtain the following:
\begin{lm}\label{210}
 Given a continuous critical subsolution $u$, there is a locally semi-concave critical subsolution $u'$ which is strict outside of $\widehat{\A}_u$ and equal to $u$ on $\A_u$.
Moreover, there is a locally semi-concave subsolution $u_0$ which is strict outside of $\widehat{\A}$. The same holds replacing locally semi-concave with locally semi-convex.
\end{lm}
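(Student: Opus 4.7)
The plan is to combine Theorem \ref{aubrymax} with the two preceding results: the proposition guaranteeing that if a continuous subsolution is strict outside its Aubry set then so is $\ttt u$ (resp.\ $\TTT u$), and the proposition that $\ttt u$ is automatically locally semi-concave. In other words, a single application of the Lax--Oleinik semi-group should regularize the continuous strict subsolution produced by Theorem \ref{aubrymax} without destroying the strictness or the value on the Aubry set.

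More precisely, I would first apply Theorem \ref{aubrymax} to the given $u$ to obtain a continuous subsolution $v$ which is strict outside $\widehat{\A}_u$ and equal to $u$ on $\A_u$, so that in particular $\widehat{\A}_v = \widehat{\A}_u$ and $\A_v = \A_u$. I then set $u' := \ttt v + \alpha[0]$. The semi-concavity proposition shows that $\ttt v$, and therefore $u'$, is locally semi-concave. The preceding proposition gives that $\ttt v$ is a subsolution strict outside $\widehat{\A}_{\ttt v} = \widehat{\A}_v = \widehat{\A}_u$, a property clearly preserved by translation by the constant $\alpha[0]$. To obtain $u' = u$ on $\A_u$, I would pick $x \in \A_u = \A_v$ together with a $(v,c,\alpha[0])$-calibrated chain through $x$; the predecessor $x_{-1}$ satisfies
$$v(x) = v(x_{-1}) + c(x_{-1},x) + \alpha[0] \geq \ttt v(x) + \alpha[0],$$
and combined with the domination inequality $v \leq \ttt v + \alpha[0]$ this forces $u'(x) = \ttt v(x) + \alpha[0] = v(x) = u(x)$.

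For the second assertion, the same procedure applied to the $u_0$ of Theorem \ref{aubrymax} (strict outside $\widehat{\A}$) produces a locally semi-concave subsolution strict outside $\widehat{\A}$. For the locally semi-convex variants one replaces $\ttt$ by $\TTT$ throughout and sets $u' := \TTT v - \alpha[0]$, using the symmetric semi-convexity of $\TTT v$ (proof formally identical to the semi-concavity of $\ttt v$, since $-c$ is locally semi-convex) together with the $\TTT$ half of the preceding proposition, which is stated on the same footing as the $\ttt$ half. There is no real obstacle in the argument beyond bookkeeping; the only points worth verifying in detail are that strictness and the equality on $\A_u$ survive both the Lax--Oleinik operation and the shift by $\pm\alpha[0]$, and these are exactly the content of the calibration computation above.
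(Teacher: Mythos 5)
Your proof is correct and follows the same route the paper indicates (the lemma is stated there with only the remark ``Using the previous result with \ref{aubrymax} we obtain the following''): apply Theorem \ref{aubrymax}, then one step of the Lax--Oleinik semi-group, invoking the semi-concavity of $\ttt v$ and the preservation of strictness. Your added calibration computation showing $v = \ttt v + \alpha[0]$ on $\A_v$, hence the need for the $+\alpha[0]$ shift to recover $u$ on $\A_u$, is exactly the detail the paper leaves implicit, and it is correct.
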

We now show how to construct $C^{1,1}$ critical subsolutions. Following the ideas of \cite{Be1}, we will apply successively the negative and positive Lax-Oleinik semi group, trying to perform this way a kind of Lasry-Lions regularization. Nevertheless, some difficulty arise. Let us begin with a lemma:

\begin{lm}
 Let $u$ be a continuous function which is strict outside of $ \widehat{\A}_u$, and $v$ verify that $$u\leqslant v\leqslant \ttt u+\alpha[0].$$ Assume moreover that  $u(x)=v(x)$ if and only if $x\in \A_u$ 
then $v$ itself is a critical subsolution, $v$ and $u$ coincide on $ \A_u= \A_v$ and $v$ is  strict outside of the set $ \widehat{\A}_u= \widehat{\A}_v$.
\end{lm}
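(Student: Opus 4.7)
The plan is to verify the three assertions in order: $v$ is a critical subsolution, the Aubry sets of $u$ and $v$ coincide, and $v$ is strict outside this common set.

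First I would establish that $v$ is a critical subsolution by chaining the given sandwich with the elementary bound $\ttt u(y) \leq u(x) + c(x,y)$: for any $(x,y) \in M \times M$,
$$v(y) - v(x) \leq \ttt u(y) + \alpha[0] - u(x) \leq u(x) + c(x,y) + \alpha[0] - u(x) = c(x,y) + \alpha[0].$$

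Next, for $\A_u = \A_v$ and $\widehat{\A}_u = \widehat{\A}_v$, I would prove both inclusions at the level of $\AA_u$ and $\AA_v$. The inclusion $\AA_u \subseteq \AA_v$ is immediate: on any $(u,c,\alpha[0])$-calibrated bi-infinite chain $(x_n)$ all points lie in $\A_u$, where $u=v$ by hypothesis, so the calibration identity transfers verbatim from $u$ to $v$. The reverse inclusion $\AA_v \subseteq \AA_u$ uses the sandwich more seriously: for $(x_n) \in \AA_v$, the calibration $v(x_{n+1})-v(x_n) = c(x_n,x_{n+1}) + \alpha[0]$ combined with $v(x_{n+1}) \leq \ttt u(x_{n+1}) + \alpha[0] \leq u(x_n) + c(x_n,x_{n+1}) + \alpha[0]$ forces $v(x_n) \leq u(x_n)$, hence $v(x_n) = u(x_n)$, so $x_n \in \A_u$. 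Once $u$ and $v$ agree along $(x_n)$, the calibration for $v$ is automatically one for $u$.

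For strictness, I would argue by contradiction. Suppose $(x,y) \notin \widehat{\A}_u$ but $v(x)-v(y) = c(y,x) + \alpha[0]$. Running the sandwich
$$v(x) \;\leq\; \ttt u(x) + \alpha[0] \;\leq\; u(y) + c(y,x) + \alpha[0] \;\leq\; v(y) + c(y,x) + \alpha[0] \;=\; v(x)$$
forces equality throughout, so $v(y)=u(y)$ (whence $y \in \A_u$ by hypothesis) and $\ttt u(x) = u(y) + c(y,x)$. On the other hand, strictness of $u$ outside $\widehat{\A}_u$ yields $u(x) < u(y) + c(y,x) + \alpha[0] = v(x)$, so $x \notin \A_u$. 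Applying Lemma \ref{err} to the relation $\ttt u(x) = u(y) + c(y,x)$ with $x \notin \A_u$ then gives $y \notin \A_u$, contradicting $y \in \A_u$.

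The main obstacle is this last step: the whole argument hinges on reading off, from the single equality $v(x)-v(y) = c(y,x) + \alpha[0]$ together with $u \leq v \leq \ttt u + \alpha[0]$, two simultaneous consequences (one about where $v$ coincides with $u$, one about attainment of the Lax-Oleinik infimum for $u$) and then feeding them into the asymmetric pair of external results, namely strictness of $u$ and Lemma \ref{err}, which pull $x$ and $y$ in opposite directions with respect to $\A_u$ and thereby produce the contradiction.
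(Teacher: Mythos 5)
Your proof is correct and follows essentially the same route as the paper: sandwich the candidate equality $v(x)-v(y)=c(y,x)+\alpha[0]$ between the bounds $u\leqslant v\leqslant \ttt u+\alpha[0]$ to extract both $v(y)=u(y)$ (hence $y\in\A_u$) and $\ttt u(x)=u(y)+c(y,x)$, then play these off against strictness of $u$ and Lemma~\ref{err}. The only real differences are organizational: you run the strictness step as a contradiction and invoke Lemma~\ref{err} in its direct form (from $x\notin\A_u$ deduce $y\notin\A_u$), whereas the paper proceeds forward and uses the contrapositive (from $y\in\A_u$ deduce $x\in\A_u$, then concludes $(y,x)\in\widehat{\A}_u$ from strictness of $u$); and you supply a separate, explicit sandwich argument for $\AA_u=\AA_v$, which the paper instead recovers at the end as a byproduct of the strictness computation together with the observation that $u=v$ on $\A_u$. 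Both are sound; your version of the Aubry set equality is arguably a bit cleaner to read, while the paper's direct argument is slightly more informative in that it pinpoints exactly which pairs can fail to be strict for $v$.
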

\begin{proof}
 That $v$ is a subsolution is a direct consequence of the following inequality which comes from the monotony of the Lax-Oleinik semi-group
$$u\leqslant v\leqslant \ttt u+\alpha[0] \leqslant \ttt v+\alpha[0].$$
Now let us prove that $v$ is strict. Assume that for some $(x,y)\in M \times M$, the following holds: $v(x)-v(y)=c(y,x)+\alpha[0]$. Since $v$ is critically dominated we have $v(x)=\ttt v(x)+\alpha[0]$ and therefore, by the above inequality,
$$ v(x)= \ttt u(x)+\alpha[0] = \ttt v(x)+\alpha[0]$$
The following inequalities are also true:
\begin{eqnarray*}
c(y,x)+\alpha[0]&=& v(x)-v(y)\\
&=& \ttt u(x) +\alpha[0] -v(y)\\
&\leqslant&\ttt u(x)+\alpha[0]-u(y)\\
&\leqslant &u(y)+c(y,x)+\alpha[0]-u(y).
\end{eqnarray*}
Therefore all inequalities are equalities and $v(y)=u(y)$. By the assumption we made, this proves that $y\in \A_u$ and from $\ttt u(x)=u(y)+c(y,x)$ that $x\in \A_u$ too (by \ref{err}). Hence we have that $u(x)=\ttt u(x)+\alpha[0]$ which yields that $u(x)-u(y)=c(y,x)+\alpha[0]$ and finally that $(y,x)\in \widehat{\A}_u$ since $u$ is strict outside of $\widehat{\A}_u$.
Consequently, we have that $\A_v\subset \A_u$ and $v$ is strict outside of $\widehat{\A}_u$. Now, since $u=v$ on $\A_u$ (because $u=\ttt u+\alpha[0]$ on $\A_u$) we have in fact $\widehat{\A}_u=\widehat{\A}_v$ which finishes the proof.
\end{proof}
\begin{rem}\rm
In the previous lemma, a similar argument shows that the hypothesis $u(x)=v(x)$ if and only if $x\in \A_u$ may be replaced by the following one:
 $ \ttt u(x)+\alpha[0]=v(x)$ if and only if $x\in \A_u$.
\end{rem}

Therefore, given a critical subsolution $u$, by \ref{210},
we can construct a locally semi-concave critical subsolution $u_1$ which coincide with $u$ on $\A_u$ and which is strict outside of $\widehat{\A}_u$ and a locally semi-convex function $u_2$ having the same properties such that $u_2\leqslant u_1$ by setting $u_2=\TTT u_1-\alpha[0]$. Moreover, starting with $u$ strict outside of $ \widehat{\A}$ we are able to construct a locally semi-convex function $\TTT u-\alpha[0]$ and a locally semi-concave function $\ttt \TTT u$ which are both strict outside of $ \widehat{\A}$ and such that $\TTT u-\alpha[0]\leqslant \ttt \TTT u$. 
Now the idea will be to consider a $C^{1,1}$ function in between which is the one we are looking for.

\begin{Th}\label{strici}
If $u$ is a critical subsolution, then there exists a $C^{1,1}$ critical subsolution $u'$ such that $u$ and $u' $ coincide on $\A_u$ and $u'$ is strict outside of $\widehat{\A}_u$.\\
 There exists a $C^{1,1}$ critical subsolution which is strict outside of $\widehat{A}$.
\end{Th}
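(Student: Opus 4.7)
The plan is to exhibit $u'$, via a refined form of Ilmanen's insertion lemma, as a $C^{1,1}$ function squeezed between a locally semi-convex and a locally semi-concave critical subsolution that coincide exactly on $\A_u$. By Theorem \ref{aubrymax} I may first replace $u$ by a continuous critical subsolution, still denoted $u$, which is strict outside $\widehat{\A}_u$ and unchanged on $\A_u$. Setting
\[
u_2 = \TTT u - \alpha[0], \qquad u_1 = \ttt u_2 + \alpha[0],
\]
the proposition immediately preceding Lemma \ref{210} shows that $u_1$ and $u_2$ are critical subsolutions strict outside $\widehat{\A}_{u_1}=\widehat{\A}_{u_2}=\widehat{\A}_u$; the standard Lax--Oleinik estimates make $u_2$ locally semi-convex and $u_1$ locally semi-concave; and $u_2 \leqslant u_1$ because $u_2$ is a subsolution.

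I then identify the coincidence set as $\{u_1 = u_2\} = \A_u$. On $\A_u$, a calibrated bi-infinite sequence through $x$ forces $u(x) = \ttt u(x) + \alpha[0] = \TTT u(x) - \alpha[0]$, whence $u_1 = u = u_2$ on $\A_u$. Conversely, $u_1(x) = u_2(x)$ means $u_2(x) = \ttt u_2(x) + \alpha[0]$; picking a $\tilde x$ realizing the infimum in $\ttt u_2(x)$ (which exists thanks to uniform super-linearity) gives the calibration $u_2(x) - u_2(\tilde x) = c(\tilde x, x) + \alpha[0]$, and strictness of $u_2$ outside $\widehat{\A}_u$ forces $(\tilde x, x) \in \widehat{\A}_u$, hence $x \in \A_u$.

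Ilmanen's insertion lemma in the form of \cite{Beil, fz} now provides a $C^{1,1}$ function $u'$ with $u_2 \leqslant u' \leqslant u_1$ and the strict separation $u_2(x) < u'(x) < u_1(x)$ on $\{u_2 < u_1\} = M \setminus \A_u$. The chain
\[
u'(y) - u'(x) \leqslant u_1(y) - u_2(x) = \ttt u_2(y) - u_2(x) + \alpha[0] \leqslant c(x,y) + \alpha[0],
\]
the final inequality coming from $\ttt u_2(y) \leqslant u_2(x) + c(x,y)$ by definition, shows $u'$ is a critical subsolution. If equality holds at some $(x,y)$, every step in this chain becomes an equality, so $u'(x) = u_2(x)$ and $u'(y) = u_1(y)$; the strict separation then forces $x, y \in \A_u$, on which $u' = u_1 = u_2 = u$, so $u(y) - u(x) = c(x,y) + \alpha[0]$, and strictness of $u$ outside $\widehat{\A}_u$ gives $(x,y) \in \widehat{\A}_u$. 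The second statement is identical, starting instead from a critical subsolution strict outside $\widehat{\A}$ provided by Theorem \ref{aubrymax}.

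The main subtlety is the strict separation refinement of Ilmanen's lemma: without it the equalities $u'(x) = u_2(x)$ and $u'(y) = u_1(y)$ would not yield $x, y \in \A_u$ and the strictness conclusion for $u'$ would collapse. Everything else is a bookkeeping exercise combining Lemma \ref{trivial}, Proposition \ref{egalite aubry}, and the fixed-point characterization of calibrated points.
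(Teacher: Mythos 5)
Your argument follows the same squeezing strategy as the paper: produce a locally semi-convex and a locally semi-concave critical subsolution (both strict off the Aubry set, both equal to $u$ on $\A_u$) that coincide exactly on $\A_u$, and insert a $C^{1,1}$ function between them by Ilmanen. The specific Lax--Oleinik bookkeeping ($u_2=\TTT u-\alpha[0]$, $u_1=\ttt u_2+\alpha[0]$ versus the paper's $u_1$ semi-concave first, $u_2=\TTT u_1-\alpha[0]$ second) is an inessential variation, and your identification of $\{u_1=u_2\}=\A_u$ is correct.

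The one place where you go slightly beyond what the paper's Theorem~\ref{inser} supplies is the two-sided ``strict separation'' $u_2(x)<u'(x)<u_1(x)$ on $\{u_2<u_1\}$. As stated, Theorem~\ref{inser} only gives the one-sided property $h(x)=g(x)\Rightarrow f(x)=g(x)$, i.e.\ $u'>u_2$ where $u_1>u_2$; it says nothing about $u'<u_1$ there. This is not a fatal gap, for two reasons. First, the two-sided version follows by averaging: apply the one-sided lemma once to the pair $(u_1,u_2)$ to get $h_1$, once to $(-u_2,-u_1)$ to get $-h_2$, and take $u'=\tfrac12(h_1+h_2)$; since $h_1,h_2$ lie between $u_2$ and $u_1$, equality $u'(x)=u_2(x)$ (resp.\ $u'(x)=u_1(x)$) forces $h_1(x)=h_2(x)=u_2(x)$ (resp.\ $=u_1(x)$) and hence $u_1(x)=u_2(x)$. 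Second, and more economically, you do not actually need the upper bound: once equality in your chain gives $u'(x)=u_2(x)$, the one-sided Ilmanen property yields $x\in\A_u$, and the equality $\ttt u_2(y)=u_2(x)+c(x,y)$ combined with Lemma~\ref{err} (applied to $u_2$, in its contrapositive form) then forces $y\in\A_{u_2}=\A_u$. This is effectively what the paper's lemma preceding Theorem~\ref{strici} packages up, and it is cleaner than invoking a strengthened insertion lemma that is not literally in the cited references. Either repair makes your proof complete.
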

From the discussion above, the proof is a direct consequence of the following lemma which appears in \cite{Il}.  
\begin{Th}\label{inser}
 Given a locally semi-concave function $f:M\mapsto \R{}$ and a locally semi-convex function $g:M\mapsto \R{}$ such that $f\geqslant g$, there exists a $C^{1,1}$ function $h:M\mapsto \R{}$ such that $f\geqslant h\geqslant g$. Moreover, $h$ can be constructed in such a way that $h(x)=g(x)$ implies $f(x)=g(x)$.
\end{Th}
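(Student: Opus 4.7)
Ilmanen's insertion is proved by a careful regularization. The key structural observation is that on the coincidence set $E:=\{x\in M : f(x)=g(x)\}$, the functions $f$ and $g$ meet $C^{1}$: since $w:=f-g$ is non-negative (by hypothesis) and locally semi-concave (as $-g$ is semi-concave), the differentiability properties recalled at the start of Section~\ref{2} force $w$ to be differentiable with $\d_{x}w=0$ at each $x\in E$, hence both $f$ and $g$ are differentiable at $x$ with $\d_{x}f=\d_{x}g$. Using the quadratic bounds characterizing semi-concavity/convexity, one moreover checks a directional Lipschitz estimate on $x\mapsto \d_{x}f$ along $E$. This rigidity is what permits a $C^{1,1}$ function to bridge the two.

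To produce $h$, I plan to use a Lasry--Lions-type regularization. Working locally in a chart and choosing $K>0$ larger than both the semi-concavity constant of $f$ and the semi-convexity constant of $g$, define
$$g^{K}(x)=\sup_{y}\Bigl\{g(y)-\tfrac{K}{2}\d(x,y)^{2}\Bigr\},\qquad f_{K}(x)=\inf_{y}\Bigl\{f(y)+\tfrac{K}{2}\d(x,y)^{2}\Bigr\}.$$
Standard Moreau--Yosida theory yields that both are $C^{1,1}$, with $g\leqslant g^{K}$ and $f_{K}\leqslant f$ everywhere, and with equality on $E$. The candidate $h$ is then built as an iterated sup--inf convolution calibrated against the pair $(f,g)$, so that: (a) the output is $C^{1,1}$ (a second Moreau--Yosida step), (b) the sandwich $g\leqslant h\leqslant f$ holds globally, and (c) equality $h(x)=g(x)$ forces each optimizer in the construction to collapse to $x$, which in turn forces $f(x)=g(x)$.

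Passing from the local construction to the global manifold is done by a locally finite atlas and a partition of unity, since convex combinations preserve both $C^{1,1}$ regularity and the sandwich $g\leqslant\cdot\leqslant f$. The principal technical obstacle is engineering the precise formula for $h$ so that the upper inequality $h\leqslant f$ holds globally: the bare sup-convolution $g^{K}$ typically exceeds $f$ away from $E$, so $h$ must combine $g^{K}$ with $f_{K}$ (or with $f$ itself) through an Ilmanen-type cutoff, or equivalently via a second (inf-type) regularization applied to $g^{K}$ and calibrated against $f$. Once the correct formula is identified, the clamping property follows directly from the uniqueness of minimizers in the regularization, which is why the full construction of \cite{Il,Beil,fz} is delicate rather than one-line.
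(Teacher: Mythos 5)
The paper does not prove Theorem~\ref{inser}: it is quoted as Ilmanen's insertion lemma and referred to \cite{Il,carda,Beil,fz}. Your sketch therefore has to stand on its own, and it contains two concrete errors plus an essential gap.

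First, a single Moreau--Yosida step does not produce $C^{1,1}$ in the directions you use. The sup-convolution $g^{K}$ of a locally semi-convex $g$ is automatically semi-convex with constant $K$, but has no reason to be semi-concave: in one dimension, $g(x)=|x|$ gives $g^{K}(x)=|x|+\tfrac{1}{2K}$, still not differentiable at the origin. Likewise the inf-convolution $f_{K}$ of a semi-concave $f$ is only semi-concave ($f(x)=-|x|$ yields $f_{K}(x)=-|x|-\tfrac{1}{2K}$). The convolutions that \emph{are} $C^{1,1}$ are the inf-convolution of a semi-convex function and the sup-convolution of a semi-concave one, but those satisfy $g_{K}\leqslant g\leqslant f\leqslant f^{K}$, putting the smooth envelopes \emph{outside} the sandwich. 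This tension is exactly why the lemma is not a one-step regularization. Second, the claim that $g^{K}=g$ and $f_{K}=f$ on the coincidence set $E$ is false whenever the common differential $p_{0}=\d_{x_0}f=\d_{x_0}g$ does not vanish: if $L$ is a local semi-convexity constant for $g$, then
$$g^{K}(x_0)-g(x_0)=\sup_{y}\bigl[g(y)-g(x_0)-\tfrac{K}{2}\d(x_0,y)^2\bigr]\geqslant\sup_{y}\bigl[\langle p_0,y-x_0\rangle-\tfrac{K+L}{2}\d(x_0,y)^2\bigr]=\tfrac{|p_0|^2}{2(K+L)}>0,$$
so the regularization detaches from $g$ precisely where the sandwich must stay pinned.

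Finally, the sketch never actually constructs $h$: you acknowledge that the bare $g^{K}$ exceeds $f$ away from $E$ and that the crux is ``engineering the precise formula,'' but that engineering is the theorem, and it is left blank. The preliminary observation that $f$ and $g$ meet $C^{1}$ on $E$ (with $\d(f-g)=0$ there, by the differentiability lemma of Section~\ref{2}) is correct and necessary, but it only rules out a first-order obstruction. The construction itself, and the verification that a partition-of-unity gluing preserves the clamping property $h(x)=g(x)\Rightarrow f(x)=g(x)$ (which a convex combination of local $h$'s does not automatically do), are both missing.
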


Let us mention that the previous theorem (\ref{inser}) is equivalent to Ilmanen's insertion lemma proved in \cite{carda}. Following Cardaliaguet's observation, two independent proofs of the claim were obtained in \cite{Beil,fz}.

We conclude this section by giving another analogue of Mather's graph theorem in this discrete setting. Let us define yet another Aubry set:
\begin{df}
Given a critical subsolution, let us set $\A_u^*\subset T^*M$ by 
$$\A_u^*=\Lambda_c^l (\widehat{\A}_u).$$
Finally, let us set 
$$\A^*=\Lambda_c^l (\widehat{\A}).$$
\end{df}
\begin{Th}[Mather's graph theorem bis]\label{graphbis}
Given a critical subsolution $u$, the canonical projection $\pi$ from $T^*M$ to $\R{}$ induces a bi-Lipschitz homeomorphism from $\A^*_u$ to $\A_u$.\\ 
The canonical projection $\pi$ from $T^*M$ to $\R{}$ induces a bi-Lipschitz homeomorphism from $\A^*$ to $\A$.
\end{Th}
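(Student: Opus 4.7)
The plan is to prove the two statements separately by combining Corollary~\ref{newco}, Proposition~\ref{dif}, and Theorem~\ref{strici}; the second assertion is handled identically to the first.

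First, I would verify bijectivity of $\pi \colon \A_u^* \to \A_u$. Given $x \in \A_u$, Corollary~\ref{newco} provides a unique point $x_1 \in M$ such that $(x, x_1) \in \widehat{\A}_u$ and $\partial c / \partial x(x, x_1)$ exists. Hence the fibre of $\pi$ above $x$ inside $\A_u^* = \Lambda_c^l(\widehat{\A}_u)$ is the singleton $\{\Lambda_c^l(x, x_1)\}$, and $\pi$ restricted to $\A_u^*$ is a bijection onto $\A_u$. Since $\pi \colon T^*M \to M$ is trivially locally $1$-Lipschitz for any reasonable metric on $T^*M$, the content of the theorem reduces to showing that $\pi^{-1}$ is locally Lipschitz.

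To obtain this estimate, I would apply Theorem~\ref{strici} to replace $u$ by a $C^{1,1}$ critical subsolution $u'$ that agrees with $u$ on $\A_u$ and is strict outside $\widehat{\A}_u$. One then checks that $\widehat{\A}_{u'} = \widehat{\A}_u$: every $u$-calibrated chain lies in $\A_u$ where $u = u'$ and is therefore $u'$-calibrated, while the reverse inclusion comes from strictness of $u'$ outside $\widehat{\A}_u$. Applying Proposition~\ref{dif} together with Corollary~\ref{newco} first to $u$ and then to $u'$ yields, at every $x \in \A_u = \A_{u'}$,
\[
\d_x u \;=\; -\frac{\partial c}{\partial x}(x, x_1) \;=\; \d_x u',
\]
where $x_1$ is the common unique forward point. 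Consequently, the inverse map $\A_u \ni x \mapsto (x, -\partial c/\partial x(x, x_1)) \in \A_u^*$ is the restriction to $\A_u$ of the globally defined map $x \mapsto (x, \d_x u')$ on $M$, which is locally Lipschitz because $u'$ is of class $C^{1,1}$.

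The statement for $\A^* \to \A$ is then proved by exactly the same argument, starting from the $C^{1,1}$ critical subsolution strict outside $\widehat{\A}$ furnished by the second half of Theorem~\ref{strici}. The only real subtlety to anticipate is the identification $\d_x u = \d_x u'$ on the (possibly very thin) set $\A_u$: the naive deduction from $u = u'$ on $\A_u$ fails in general because $\A_u$ need not have interior, so the argument must instead go through Corollary~\ref{newco}, which pins down both differentials to the same partial derivative of the cost at the same calibrating point.
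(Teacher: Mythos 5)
Your proof is correct and follows the same strategy as the paper: replace $u$ by the $C^{1,1}$ subsolution $u'$ of Theorem~\ref{strici}, use Corollary~\ref{newco} together with the definition of $\Lambda_c^l$ to identify $\pi^{-1}(x)=(x,\d_x u')$ on $\A_u$, and deduce Lipschitz-ness of $\pi^{-1}$ from $u'\in C^{1,1}$. You merely spell out the checks that the paper leaves implicit behind its ``without loss of generality'' step---notably that $\widehat{\A}_u=\widehat{\A}_{u'}$ so that $\A_u^*=\A_{u'}^*$, and that the equality $\d_x u=\d_x u'$ on the thin set $\A_u$ is obtained via the common forward calibrating point of Corollary~\ref{newco} rather than from $u=u'$ on $\A_u$ alone.
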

\begin{proof}
By \ref{strici}, we can without loss of generality assume that $u$ is $C^{1,1}$. By \ref{newco} and by definition of the skew Legendre transform $\Lambda_c^l$, the application $\pi^{-1}$ from $\A_u$ to $\A_u^*$ is nothing but the following:
$$\forall x\in \A_u,\  \pi^{-1}(x)=(x,\d_x u)$$
which is therefore Lipschitz since $u$ is $C^{1,1}$.\\
The second part is proved similarly starting with a $C^{1,1}$ strict subsolution (given by \ref{strici})  whose Aubry set is $\A$.
\end{proof}

\section{Invariant and equivariant weak KAM solutions}\label{section5}

In this section, following the ideas of \cite{FaMa}, we consider the case of invariant cost
functions. This case arises naturally when studying covering spaces
with the group of deck transformations as group of symmetries (we will study this case in the next and last section). Let us notice that most results of this section can be proved in the much more general setting exposed in \cite{Za}, when $M$ is merely a length space at large scale. 

Let $G$
be a group of homeomorphisms that preserve $c$ that is
$$\forall g\in G, \forall (x,y)\in M\times M,\ c(g(x),g(y))=c(x,y).$$
We will denote by $\I$ the set of $G$-invariant functions that is 
$$\I=\left\{f\in \R{M},\forall g\in G,\ f\circ g=f\right\}.$$ 
For each
$C\in \R{}$ let 
$$\HH_{inv}(C)=\HH(C)\cap\I$$
 be the set of the
invariant functions which are $C$-dominated. It is clear that
$\HH_{inv}(C)\cap C^0(M,\R{})$ is a closed (for the topology of uniform convergence on compact subsets) and convex subset of
$\hh{C}$. It is also clear that, if $q$ denotes the canonical projection from $C^0(M,\R{})$ to $C^0(M,\R{})/\R{}\mathbbm{1}_M$ ($\mathbbm{1}_M$ denotes the constant function equal to $1$ on $M$), and if we let $\HHH(C)=q(\HH(C)\cap
C^0(X,\R{}))$, then we may define
 $$\HHH_{inv}(C)=q(\HH_{inv}(C)\cap
C^0(X,\R{}))=\HHH(C)\cap q(\I),$$
 where the last equality follows from the fact that $\I$ contains the constant
functions. Finally, since the Lax-Oleinik semi-group $\ttt$ commutes with the addition of constants, it induces canonically a semi-group  $\tttt$ on the quotient $C^0(M,\R{})/\R{}\mathbbm{1}_M$.
\begin{pr}\label{invariance}
 If $u\in \I$, then $\ttt u\in \I$. Moreover, $\HH_{inv}(C)\neq
 \varnothing$ for all $C\geqslant C(0)$.
\end{pr}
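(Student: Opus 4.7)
The plan is to verify both assertions directly from the definitions; neither part should require any technical machinery beyond the invariance hypothesis on $c$ and the super-linearity constant $C(0)$ introduced at the start of the paper.

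For the invariance of $\ttt u$, I would fix $g\in G$ and $x\in M$, and unfold
$$\ttt u(g(x)) = \inf_{y\in M}\bigl(u(y)+c(y,g(x))\bigr).$$
Because $g$ is a homeomorphism of $M$, in particular a bijection, the change of variable $y=g(z)$ parametrizes $M$, so the infimum above equals $\inf_{z\in M}\bigl(u(g(z))+c(g(z),g(x))\bigr)$. The $G$-invariance of $c$ gives $c(g(z),g(x))=c(z,x)$ and the hypothesis $u\in\I$ gives $u(g(z))=u(z)$; therefore the expression collapses to $\inf_{z\in M}\bigl(u(z)+c(z,x)\bigr)=\ttt u(x)$. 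Since $g\in G$ and $x\in M$ were arbitrary, $\ttt u\circ g=\ttt u$, i.e., $\ttt u\in\I$.

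For the non-emptiness claim I would simply exhibit a member of $\HH_{inv}(C)$: the constant function $0$. It is tautologically $G$-invariant. The super-linearity condition \ref{unif} applied with $k=0$ yields $c(x,y)\geqslant -C(0)$ for every $(x,y)\in M\times M$, so for any $C\geqslant C(0)$ one has $0-0=0\leqslant c(x,y)+C$, which is precisely the domination inequality \eqref{sub}. Hence the zero function lies in $\HH_{inv}(C)$, and this set is non-empty.

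There is essentially no obstacle here: the first half is a one-line change of variable that uses $G$-invariance of both $u$ and $c$ together with the bijectivity of each $g\in G$, and the second half is a direct reading of the constant $C(0)$ from hypothesis \ref{unif}. The only point worth noting is that if one preferred not to invoke the zero function, a non-negative invariant subsolution could equivalently be produced by observing that $\HH(C)$ is non-empty for $C\geqslant C(0)$ and then averaging or infimizing over $G$-orbits; but the constant-function argument sidesteps this and makes the statement immediate.
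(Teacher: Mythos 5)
Your argument is essentially identical to the paper's: the same change of variable $y = g(z)$ using the bijectivity of $g$ together with $G$-invariance of $u$ and $c$ for the first part, and the observation that constant functions are $G$-invariant and dominated because $c + C(0) \geqslant 0$ (from super-linearity at $k=0$) for the second. Both are correct and complete.
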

\begin{proof}
 The last part of this proposition is immediate since constant
 functions are dominated by $c+C(0)\geqslant0$.\\ 
 To prove the first
 part, let $u\in \I$ and $g\in G$. Then
$$\ttt u(g(x))=\inf_{y\in M} u(y)+c(y,g(x))=\inf_{y\in M}
 u(g(y))+c(g(y),g(x))=\inf_{y\in M} u(y)+c(y,x)$$
  where we have first
 used the fact that $g$ is a bijection and then the invariance of $u$
 and $c$ by $g$.
\end{proof}
We now define the invariant critical value for the action of the group
$G$ as the constant
$$C_{inv}=\inf\{C\in \R{},\ \HH_{inv}(C)\neq \varnothing\}.$$
Clearly, we have that $-A(0)\leqslant \alpha [0]\leqslant C_{inv}\leqslant C(0)$.
We are now able to prove the invariant weak KAM theorem:
\begin{Th}[invariant weak KAM]\label{invkam}
 There exists a $G$-invariant function $u$ such that $u=\ttt u+C_{inv}$.
\end{Th}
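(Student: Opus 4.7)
The plan is to apply the Schauder-Tychonoff fixed point theorem to the semi-group $\tttt$ on the convex set $\HHH_{inv}(C_{inv})$, sitting inside the locally convex Hausdorff quotient $C^0(M,\R{})/\R{}\mathbbm{1}_M$ equipped with the topology of uniform convergence on compact sets. A fixed point $[\bar u]\in\HHH_{inv}(C_{inv})$ lifts to an invariant continuous function $\bar u$ with $\ttt\bar u=\bar u+c_0$ for some real constant $c_0$, and the last step will be to show $c_0=-C_{inv}$ using the extremal definition of the invariant critical value.

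The first step is to ensure $\HHH_{inv}(C_{inv})$ is non-empty. Pick a sequence $C_n\downarrow C_{inv}$ and invariant continuous $C_n$-subsolutions $u_n\in\HH_{inv}(C_n)\cap C^0(M,\R{})$, which exist by definition of $C_{inv}$, and normalize them so that $u_n(x_0)=0$ at a fixed base point. The standard weak-KAM estimate, combining uniform super-linearity \ref{unif} and uniform boundedness \ref{unifb}, yields that continuous dominated functions are equi-continuous and locally uniformly bounded on any bounded subset of $M$, with a modulus depending only on the domination constant. Arzelà-Ascoli then extracts a subsequence converging uniformly on compact sets to a $u_\infty$ which remains continuous, $G$-invariant, and $C_{inv}$-dominated, since all these properties are closed under such convergence.

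Next I would verify the remaining Schauder-Tychonoff hypotheses on $\HHH_{inv}(C_{inv})$. Convexity and closedness follow from the quotient description and the stability of the defining inequalities under uniform-on-compact convergence; compactness is the same Arzelà-Ascoli argument applied to arbitrary normalized representatives. Proposition \ref{invariance} together with the monotonicity of $\ttt$ applied to the inequality $u\leqslant\ttt u+C_{inv}$ shows that $\ttt u+C_{inv}$ is again invariant and $C_{inv}$-dominated whenever $u$ is, so $\tttt$ preserves $\HHH_{inv}(C_{inv})$. Continuity of $\tttt$ on the quotient follows from the non-expansiveness of $\ttt$ and from the fact that, by uniform super-linearity, the infimum defining $\ttt u(x)$ is attained on a compact set of $y$'s whose diameter is controlled by local data. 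Schauder-Tychonoff then produces the fixed point $[\bar u]$.

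For the identification of $c_0$: the relation $\ttt\bar u=\bar u+c_0$ unfolded reads $\bar u(y)-\bar u(x)\leqslant c(x,y)-c_0$ for every $(x,y)\in M\times M$, so $\bar u$ is an invariant $(-c_0)$-subsolution and $-c_0\geqslant C_{inv}$ by definition of $C_{inv}$. Conversely, $[\bar u]\in\HHH_{inv}(C_{inv})$ means $\bar u\leqslant\ttt\bar u+C_{inv}=\bar u+c_0+C_{inv}$, whence $c_0\geqslant -C_{inv}$; combining gives $c_0=-C_{inv}$ and the desired identity $\bar u=\ttt\bar u+C_{inv}$. The main potential obstacle is the compactness step: one must confirm that the Arzelà-Ascoli argument in the quotient really applies in this possibly non-compact, invariant framework, i.e.\ that the uniform equi-continuity of critical subsolutions comes solely from \ref{unif} and \ref{unifb} and that the limit genuinely remains in $\HHH_{inv}(C_{inv})$ after taking the quotient.
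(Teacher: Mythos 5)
There is a genuine gap in the compactness step. You assert that ``continuous dominated functions are equi-continuous\ldots with a modulus depending only on the domination constant,'' and you use this both to show that $\HHH_{inv}(C_{inv})$ is nonempty and that it is compact, so that Schauder--Tychonoff can be applied directly on $\HHH_{inv}(C_{inv})$. This is false in general: domination $u(y)-u(x)\leqslant c(x,y)+C$ only controls the oscillation of $u$ by $c(x,y)+C$, which does \emph{not} tend to $0$ as $y\to x$ (one has $c(x,x)+\alpha[0]\geqslant 0$, with equality only on the projected Aubry set). For a trivial example, if $c\equiv 1$ on a compact $M$ then any continuous function of oscillation $\leqslant 1$ is $0$-dominated, and such functions are certainly not equi-continuous. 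Hence $\HHH_{inv}(C)$ need not be compact, and neither Arzel\`a--Ascoli nor Schauder--Tychonoff can be applied to it as you propose.

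The paper circumvents this precisely by exploiting the regularizing effect of the Lax--Oleinik operator: $\ttt$ maps $\hh{C}$ into an equi-continuous family (the modulus comes from the modulus of $c$ on the compact set where the infimum is attained, which is controlled by \ref{unif} and \ref{unifb}). The paper therefore works with
$$H_{inv}(C)=\overline{\conv\bigl(\tttt(\HHH_{inv}(C))\bigr)},$$
which is convex, $\tttt$-stable by Proposition~\ref{invariance}, and compact by Ascoli applied to the equi-continuous image of $\tttt$. Nonemptiness of $\HHH_{inv}(C_{inv})$ then follows because $\bigcap_{C>C_{inv}}H_{inv}(C)$ is a decreasing intersection of nonempty compacta, and Schauder--Tychonoff is applied to $\tttt$ on $H_{inv}(C_{inv})$, not on $\HHH_{inv}(C_{inv})$. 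Your identification of the constant $c_0=-C_{inv}$ at the end is correct and agrees with the paper. To repair your argument, replace every use of $\HHH_{inv}(\cdot)$ in the extraction and fixed-point steps by $\overline{\conv(\tttt(\HHH_{inv}(\cdot)))}$, or at least pass once through $\ttt$ before invoking Arzel\`a--Ascoli.
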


\begin{proof}
We only sketch the proof since it is very similar to the proof of the weak KAM theorem (\cite{Za}). We know that $\I$ is stable by $\ttt$. This implies that $\II$ is stable by $\tttt$. Therefore $\HHH_{inv}(C)$ is stable by $\tttt$ and so is $H_{inv}(C)=\overline{\conv (\tttt(\HHH_{inv}(C)))}$, for each $C\in \R{}$. It is obvious that $H_{inv}(C)\neq \varnothing$ if and only if $\HHH_{inv}(C)\neq\varnothing$. It can be checked, using the Ascoli theorem, that  $H_{inv}(C)$ is convex and compact for the quotient of the topology of uniform convergence on compact subsets.
As a consequence, 
$$\bigcap_{C>C_{inv}}H_{inv}(C)\neq\varnothing$$
 as the intersection of a decreasing family of compact nonempty sets. Therefore, $\HHH_{inv}(C_{inv})$ is nonempty. Moreover, $\tttt$ induces a continuous mapping from $H_{inv}(C_{inv})$ into itself, so applying the Schauder-Tykhonoff theorem, we obtain a fixed point, that is a function $u_{inv}\in \HH_{inv}(C_{inv})$ and a constant $C'$ such that $\ttt u_{inv}=u_{inv}+C'$. Finally, using the minimality of $C_{inv}$, it is easy to prove that in fact $-C'=C_{inv}$ which ends the proof of the theorem. 
\end{proof}

Instead of looking at functions invariant by the group of symmetries
$G$ we can consider functions whose projections to
$C^0(X,\R{})\backslash \R{}\mathbbm{1}_M$ are invariant that is functions
$u$ such that for each $g\in G$ there is a $\rho (g)$ such that
$u\circ g =u+\rho(g)$. Obviously, $\rho :G\rightarrow \R{}$ is a group
homomorphism. We will denote by $\Hom(G,\R{})$ the set of group
homomorphisms from $G$ to $\R{}$. Given a $\rho \in \Hom(G,\R{})$ we
will say that a function $u$ is $\rho$-equivariant if it satisfies
$u\circ g =u+\rho(g)$ for all $g$ in $G$, we will denote by
$\I_{\rho}$ the set of continuous $\rho$-equivariant functions. It is
obvious that $\I_{\rho}$ is an affine subset of $C^0(X,\R{})$, in
fact, it is either empty or equal to $u+\I$ where $u\in \I_{\rho}$. In
particular $\I_0=\I$.  For $C\in \R{}$, $\rho \in \Hom(G,\R{})$, we
set $\HH_{\rho}(C)=\hh{C}\cap \I_{\rho}$ and we define the $\rho$-equivariant critical value 
$$C_\rho=\inf\{C\in\R{},\ \HH_{\rho}(C)\neq\varnothing\}\in
\R{}\cup\{+\infty\}.$$
 Notice that the value $+\infty$ is reached if
and only if there is no $C$ such that
$\HH_{\rho}(C)\neq\varnothing$. For example, the $0$-equivariant critical value or invariant critical value is nothing but $C_0=C_{inv}$.
First, we notice that since the Lax-Oleinik semi-group commutes with addition of constants, we have, as in \ref{invariance}, the following:
\begin{pr}
Let us consider a morphism $\rho\in \Hom(G,\R{})$.   If $u\in \I_{\rho}$, then $\ttt u\in \I_{\rho}$. 
\end{pr}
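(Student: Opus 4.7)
The plan is to follow the exact same pattern as the proof of Proposition \ref{invariance}, exploiting the structural fact that the author flags immediately above the statement: the Lax-Oleinik semi-group $\ttt$ commutes with the addition of constants. Since for fixed $g \in G$ the value $\rho(g)$ is a real number independent of the base point, it can be pulled out of an infimum freely, and this is essentially the only ingredient needed to promote the invariant case to the equivariant one.

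Concretely, I would fix $g \in G$ and $x \in M$, start from the definition
$$\ttt u(g(x)) = \inf_{y \in M} u(y) + c(y,g(x)),$$
and perform the change of variables $y = g(y')$, which is legitimate because $g$ is a bijection of $M$. Then, invoking in turn the $\rho$-equivariance of $u$, namely $u(g(y')) = u(y') + \rho(g)$, and the $G$-invariance of $c$, namely $c(g(y'),g(x)) = c(y',x)$, the expression becomes
$$\inf_{y' \in M} \bigl(u(y') + \rho(g) + c(y',x)\bigr) = \rho(g) + \ttt u(x),$$
which is precisely the required identity $\ttt u \circ g = \ttt u + \rho(g)$.

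There is essentially no obstacle: the computation reduces to the bijectivity of $g$, the additive behaviour of $u$ under $g$, and the exact $G$-invariance of $c$, while the constant $\rho(g)$ passes through the infimum without any interaction with it. The only extra point worth mentioning is that $\ttt u$ must remain in the function class that defines $\I_\rho$, i.e. it must be continuous; this is immediate from the local semi-concavity of $\ttt u$ already established in Section \ref{existence}. Thus the verification is strictly parallel to the proof of Proposition \ref{invariance}, with $\rho(g)$ playing the role of a bookkeeping additive constant in place of $0$.
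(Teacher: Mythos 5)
Your proof is correct and is essentially the same argument the paper intends: the paper gives no separate proof of this proposition, only the remark that it follows ``as in \ref{invariance}'' because $\ttt$ commutes with addition of constants, and your computation is precisely that of Proposition \ref{invariance} with the constant $\rho(g)$ pulled through the infimum. One small caveat: your appeal to local semi-concavity for the continuity of $\ttt u$ presupposes $u$ is dominated, which is not part of the hypothesis $u\in\I_\rho$ as stated, but this gap is present in the paper's own treatment and is harmless in the context where the proposition is actually applied (to functions in $\HH_\rho(C)$).
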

  
\begin{df}\rm
We will say that a
homomorphism $\rho :G\rightarrow \R{}$ is tame if the inequality $C_\rho<+\infty$ is verified and we will denote by $\Homt(G,\R{})$ the set of tame
homomorphisms.
\end{df}
  Since $\I_{\rho}$ is closed for the compact open
topology and invariant by the Lax-Oleinik semi-group, we can easily
adapt the proof of \ref{invkam} to obtain the following equivariant
weak KAM theorem:
\begin{Th}[equivariant weak KAM]\label{equikam}
 For each $\rho \in \Homt(G,\R{})$, we have $\HH_{\rho}(C_\rho)\neq\varnothing$. Moreover, we can find a $\rho$-equivariant
 weak KAM solution in $\HH_{\rho}(C_\rho)$ that is a continuous
 function $u$ such that $u=\ttt u+C_\rho$ and for all $g\in G$,
 $u\circ g=u+\rho(g)$.
\end{Th}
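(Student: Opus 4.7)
The plan is to mimic the proof of Theorem \ref{invkam} verbatim, replacing the closed linear subspace $\I$ of $G$-invariant functions by the affine subspace $\I_\rho$ of $\rho$-equivariant ones. By the proposition immediately preceding the theorem, $\ttt$ preserves $\I_\rho$; combined with the fact that $\ttt$ preserves $\HH(C)$ (since a $C$-dominated function satisfies $\ttt u \geqslant u - C$, hence $\ttt u(y)-\ttt u(x) \leqslant u(x)+c(x,y)-(u(x)-C)=c(x,y)+C$), this gives stability of $\HH_\rho(C)=\HH(C)\cap \I_\rho$ under $\ttt$ for every $C\geqslant C_\rho$.

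Next I would check that the quotient $q\bigl(\HH_\rho(C)\cap C^0(M,\R{})\bigr)$ defines a nonempty, closed, convex, and compact subset of $C^0(M,\R{})/\R{}\mathbbm{1}_M$ equipped with the quotient of the topology of uniform convergence on compacts. Nonemptiness is exactly the definition of $C\geqslant C_\rho$ and crucially uses tameness of $\rho$; convexity follows because $\I_\rho$ is affine and $\HH(C)$ is convex. Closedness in the quotient reduces to checking that $\I_\rho$ is saturated under addition of constants: indeed if $v=u+t\mathbbm{1}_M$ with $u\in \I_\rho$ then $v\circ g=u\circ g+t\mathbbm{1}_M=v+\rho(g)$, so $v\in \I_\rho$; hence $q^{-1}q(\I_\rho)=\I_\rho$ is closed. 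Finally, equicontinuity modulo constants is the same Ascoli argument as in \ref{invkam}: the uniform boundedness assumption \ref{unifb} provides a common modulus of continuity for all $C$-dominated functions, and once we normalize representatives by fixing a value at a base point, the resulting family is relatively compact.

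Setting $H_\rho(C)=\overline{\conv\bigl(\tttt(q(\HH_\rho(C)))\bigr)}$ then yields a decreasing family of nonempty compact convex sets, each stable under the quotient semi-group $\tttt$. Intersecting the family over $C>C_\rho$ gives a nonempty set, so $H_\rho(C_\rho)\neq\varnothing$ and hence $\HH_\rho(C_\rho)\neq\varnothing$. Since $\tttt$ induces a continuous self-map of the compact convex set $H_\rho(C_\rho)$, the Schauder-Tykhonoff theorem produces a fixed point: a continuous $\rho$-equivariant function $u\in \HH_\rho(C_\rho)$ and a constant $C'$ with $\ttt u=u+C'$. Writing $u=\ttt u-C'$, the function $u$ is a $(-C')$-subsolution, so by definition of $C_\rho$ we get $-C'\geqslant C_\rho$; conversely $u\in \HH_\rho(C_\rho)$ forces $u\leqslant \ttt u+C_\rho$, i.e.\ $-C'\leqslant C_\rho$. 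Thus $C'=-C_\rho$, which is precisely $u=\ttt u+C_\rho$.

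The main subtlety, compared with the invariant case, is that $\I_\rho$ is only affine (not linear); one must check, as above, that this is enough for the convexity and closedness arguments to go through and that $\I_\rho$ remains saturated under translation by constants, so that the passage to the quotient $C^0(M,\R{})/\R{}\mathbbm{1}_M$ is well behaved. Beyond that, the entire argument is formally identical to the invariant case.
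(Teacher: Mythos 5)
Your proposal is correct and follows essentially the same route as the paper, which itself merely observes that $\I_\rho$ is closed for the compact-open topology and invariant under $\ttt$ and then says "we can easily adapt the proof of \ref{invkam}". You carry out exactly that adaptation, spelling out the points the paper leaves implicit: that $\I_\rho$ being affine rather than linear still gives convexity of $\HH_\rho(C)$, that $\I_\rho$ is saturated under addition of constants so the passage to $C^0(M,\R{})/\R{}\mathbbm{1}_M$ is well behaved, that tameness of $\rho$ is what guarantees nonemptiness, and the sandwich argument identifying $-C'=C_\rho$ at the end.
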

Here are some properties of tame homomorphisms and of the function $\rho \mapsto C_\rho$.
\begin{pr}
 The set $\Homt(G,\R{})$ is a vector subspace of $\Hom(G,\R{})$. The
 restriction of the function $C$ to $\Homt(G,\R{})$ is
 convex. Moreover, if $\Homt(G,\R{})$ is finite dimensional, then
 the  function $C $ is super-linear.
\end{pr}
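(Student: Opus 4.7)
The plan is to derive the three parts from the equivariant weak KAM theorem~\ref{equikam} (which produces, for each tame $\rho$, a continuous $u\in\HH_\rho(C_\rho)$ attaining the critical value), together with averaging arguments and iteration along powers of group elements. I expect convexity and super-linearity to be quick; the main obstacle will be establishing closure of $\Homt$ under the doubling $\rho\mapsto 2\rho$ (needed for the vector subspace assertion), since multiplying a dominated function by a factor greater than $1$ does not preserve $c$-domination.

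For \emph{convexity}, given $\rho_1,\rho_2\in\Homt$ and $\lambda\in[0,1]$, I would pick continuous $u_i\in\HH_{\rho_i}(C_{\rho_i})$ via~\ref{equikam} and form $v=\lambda u_1+(1-\lambda)u_2$. The equivariance transforms linearly under averaging, placing $v\in\I_{\lambda\rho_1+(1-\lambda)\rho_2}$, and the convex combination of the two domination inequalities gives $v(y)-v(x)\leqslant c(x,y)+\lambda C_{\rho_1}+(1-\lambda)C_{\rho_2}$. This simultaneously yields $\lambda\rho_1+(1-\lambda)\rho_2\in\Homt$ and $C_{\lambda\rho_1+(1-\lambda)\rho_2}\leqslant\lambda C_{\rho_1}+(1-\lambda)C_{\rho_2}$.

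For the \emph{vector subspace} property, the constant function $0$ belongs to $\HH_0(C(0))$ by Proposition~\ref{invariance}, so $0\in\Homt$; combined with convexity this already gives $\lambda\rho\in\Homt$ for every $\lambda\in[0,1]$. The remaining points are closure under negation and closure under doubling (from which general scaling and sums follow via $\rho_1+\rho_2=2\cdot\tfrac{\rho_1+\rho_2}{2}$). Closure under negation I would obtain by using the reversed cost $\check c(x,y):=c(y,x)$ (which satisfies the same hypotheses and is preserved by $G$): the substitution $u\mapsto -u$ converts $c$-dominated $\rho$-equivariant functions into $\check c$-dominated $(-\rho)$-equivariant ones, and running the equivariant weak KAM construction for $\check c$ furnishes the required function after a second negation. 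Closure under doubling is the delicate step; my approach is to iterate the Lax-Oleinik semi-group on an equivariant weak KAM solution $u=\ttt u+C_\rho$ and combine consecutive iterates, exploiting the fact that chains of length two realize a quantitative version of the domination estimate.

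For \emph{super-linearity} in the finite-dimensional case, the key estimate is that for any $u\in\HH_\rho(C_\rho)$, $g\in G$, and $x\in M$, the domination at $(x,gx)$ reads $\rho(g)=u(gx)-u(x)\leqslant c(x,gx)+C_\rho$, and iterating along powers gives $C_\rho\geqslant n\rho(g)-c(x,g^nx)$ for every $n\in\mathbb{N}$. Since $\Homt$ is known (from the previous step) to be a vector space of finite dimension $d$, I would pick $g_1,\ldots,g_d\in G$ so that $\rho\mapsto(\rho(g_i))_i$ is a linear isomorphism $\Homt\to\R{d}$, and equip $\Homt$ with the norm $\|\rho\|:=\max_i|\rho(g_i)|$. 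For each $\rho\in\Homt$ select an index $i^*$ and a sign $\varepsilon\in\{\pm 1\}$ such that $\rho(g_{i^*}^\varepsilon)=\|\rho\|$; the iterated estimate then gives $C_\rho\geqslant n\|\rho\|-A_n$, where $A_n:=\max_{i,\varepsilon}c(x,g_i^{\varepsilon n}x)$ is a constant depending only on $n$ (finite by condition~\ref{unifb}). Letting $\|\rho\|\to+\infty$ yields $\liminf C_\rho/\|\rho\|\geqslant n$ for every $n\in\mathbb{N}$, which is the super-linearity.
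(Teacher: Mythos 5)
Your treatment of convexity and of super-linearity in the finite-dimensional case is correct and follows essentially the same route as the paper: for convexity, take convex combinations of equivariant dominated representatives (using that the domination inequality $u(y)-u(x)\leqslant c(x,y)+C$ is preserved by nonnegative coefficients summing to one); for super-linearity, iterate domination along powers $g_i^{\pm n}$ of a spanning (or basis) family and use condition \ref{unifb} to bound the constants $A_n$. The only cosmetic difference is that the paper works with a spanning set $g_1,\ldots,g_k$ and obtains $n\|\rho\|\leqslant A_n+C_\rho$ directly rather than via a $\liminf$.

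The genuine gap is the vector-subspace assertion, which you correctly flag as the hard step; neither of your two workarounds closes it. The reversed-cost substitution proves the wrong statement: if $u\prec c+C$ and $u\in\I_{\rho}$, then $-u$ is $(-\rho)$-equivariant and satisfies $-u(y)-(-u(x))=u(x)-u(y)\leqslant c(y,x)+C$, which is domination for $\check c(x,y)=c(y,x)$, not for $c$; the second negation just returns you to $u$, so you have shown only that $-\rho$ is tame for the \emph{reversed} cost, which says nothing about $C_{-\rho}$ relative to $c$. The "iterate Lax-Oleinik and combine consecutive iterates" idea is not substantiated: if $u=\ttt u+C_\rho$, then $u+\ttt u=2u-C_\rho$ is $2\rho$-equivariant, but one only obtains $2u(y)-2u(x)\leqslant 2c(x,y)+2C_\rho$, which is not a $c$-domination bound. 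You should also be aware that the paper's own proof of this point is defective: it asserts as "clear" that $\lambda_1u_1+\lambda_2u_2\in\HH(|\lambda_1|C_1+|\lambda_2|C_2)$ for arbitrary $\lambda_1,\lambda_2\in\R{}$, but for $\lambda_1=2,\lambda_2=0$ this would force $c$ to be bounded above (contradicting \ref{unif} on a non-compact $M$), and for $\lambda_1=-1,\lambda_2=0$ it would require $c(y,x)\leqslant c(x,y)+2C_1$ for all $(x,y)$, which is not assumed. The inclusion actually holds only when $\lambda_1,\lambda_2\geqslant 0$ and $\lambda_1+\lambda_2=1$, i.e., precisely the convexity case, and does not yield closure of $\Homt(G,\R{})$ under sums or under scaling outside $[0,1]$. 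Proving the vector-subspace claim would require either an additional structural hypothesis on $c$ (symmetry, or a quasi-triangle inequality making $\ttt$ regularizing into dominated functions) or an argument not present in your sketch or in the paper.
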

\begin{proof}
 Let $\rho _1$ and $\rho_2$ be two tame homomorphisms, $\lambda_1$ and
 $\lambda_2$ be real numbers. Let $u_1\in \HH_{\rho_1}(C_1)$ and
 $u_2\in \HH_{\rho_2}(C_2)$ where $C_1$ and $C_2$ have been chosen
 such that $\HH_{\rho_1}(C_1)\neq \varnothing$ and
 $\HH_{\rho_2}(C_2)\neq \varnothing$. Then $\lambda_1 u_1+\lambda_2
 u_2\in \I_{\lambda_1\rho_1+\lambda_2\rho_2}$ (as a matter of fact,
 $\lambda_1\I_{\rho_1}+\lambda_2\I_{\rho_2}\subset
 \I_{\lambda_1\rho_1+\lambda_2\rho_2}$).  Moreover, we clearly have
 that $\lambda_1 u_1+\lambda_2 u_2\in
 \HH(|\lambda_1|C_1+|\lambda_2|C_2)$ which proves that $\Homt(G,\R{})$
 is a vector subspace of $\Hom(G,\R{})$.
 
  If now
 $\lambda_1,\lambda_2\geqslant 0$ and $\lambda_1+\lambda_2=1$ then the inclusion
 $$\lambda_1\HH(C_1)+\lambda_2\HH(C_2)\subset
 \HH(\lambda_1C_1+\lambda_2 C_2)$$
 holds. Altogether with the inclusion
 $$\lambda_1\I_{\rho_1}+\lambda_2\I_{\rho_2}\subset
 \I_{\lambda_1\rho_1+\lambda_2\rho_2},$$
 this proves the convexity of the
  function $C$.
 
 We now prove the super-linearity when
 $\Homt(G,\R{})$ is finite dimensional. For each $g\in G$, consider
 the linear form 
 $$\hat{g}:\Homt(G,\R{})\rightarrow \R{},$$
 $$ \rho \mapsto
 \rho(g).$$
  These linear forms span a sub-vector space of the dual of
 $\Homt(G,\R{})$ which is therefore finite dimensional. Let
 $g_1,\ldots,g_k$ be such that any $\hat{g}$ is a linear combination
 of the $\hat{g_i}$. In particular, it follows that if $\rho \in
 \Homt(G,\R{})$ then $\rho=0$ if only if $\rho(g_1)=\cdots
 =\rho(g_k)=0$. Thus we can use as a norm on $\Homt(G,\R{})$, $\|\rho
 \|=\max_{i=1}^k|\rho(g_i)|$. If $\rho$ is given, let $u$ be a
 $\rho$-equivariant weak KAM solution such that $u=\ttt u +C_\rho$. We have $n\rho(g_i)=\rho(g_i^n)=u(g_i^n(x_0))-u(x_0)$
        for $n\in \mathbb{N}$, $i=1,\ldots ,k$ and some $x_0$
        fixed. We now have using the domination $u\<c+C_\rho$
$$n\rho (g_i)=u(g_i^n(x_0))-u(x_0)\leqslant c(x_0,g_i^n(x_0))+C_\rho.$$
 The constant $A_{i,n}=c(x_0,g_i^n(x_0))$ is independant of
$\rho$. Arguing in the same way as above with $g_i^{-1}$ instead of
$g_i$, we obtain a constant $A'_{i,n}$ independant of $\rho$ such that
$$-n\rho (g_i)=u(g_i^{-n}(x_0))-u(x_0)\leqslant A'_{i,n}+C_\rho.$$
 If we set $A_n=\max(A_{1,n},\ldots,A_{k,n},A'_{1,n},\ldots
,A'_{k,n})$ we have obtained a constant independant of $\rho$ such
that
$$n\|\rho\|=n\max(\rho(g_1),\ldots,\rho(g_k),-\rho(g_1),\ldots,-\rho(g_k))\leqslant
A_n+C_\rho.$$ 
Since $n$ is an arbitrary integer, this proves the
super-linearity of $\rho \mapsto C_\rho$.

\end{proof}
We set 
$$C_{G,min}=\inf\{C_\rho,\ \rho\in \Hom(G,\R{})\}=\inf\{C_\rho,\ \rho\in \Homt(G,\R{})\}.$$
\begin{lm}
 There exists $\rho \in \Homt(G,\R{})$ such that $C_{G,min}=C_\rho$.
\end{lm}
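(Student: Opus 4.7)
My plan is to obtain such a $\rho$ as a limit along a minimizing sequence, exploiting equicontinuity of weak KAM solutions to extract a locally uniform limit and then reading off the homomorphism from that limit.

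First I would fix a base point $x_0 \in M$, take a sequence $\rho_n \in \Homt(G,\R{})$ with $C_{\rho_n} \to C_{G,min}$, and apply Theorem \ref{equikam} to produce for each $n$ a $\rho_n$-equivariant weak KAM solution $u_n$ satisfying $u_n = \ttt u_n + C_{\rho_n}$. Subtracting a constant does not affect equivariance, so I may normalize $u_n(x_0) = 0$. Note that $C_{G,min}$ is finite: any $\rho$-equivariant subsolution is in particular a subsolution of $c$, so $\alpha[0] \leq C_\rho$ for every tame $\rho$, while $C_{G,min} \leq C_0 = C_{inv} < +\infty$ by Proposition \ref{invariance}.

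Next I would check that the family $\{u_n\}$ is equicontinuous and locally equibounded. Each $u_n$ being $C_{\rho_n}$-dominated yields, via uniform boundedness, the estimate $|u_n(y) - u_n(x)| \leq A(d(x,y)) + |C_{\rho_n}|$. Since the sequence $C_{\rho_n}$ is bounded (it converges to $C_{G,min}$), this provides a common modulus of continuity; together with the normalization $u_n(x_0) = 0$, it also yields local equiboundedness. By Hopf--Rinow and the Arzel\`a--Ascoli theorem, after a diagonal extraction I may assume $u_n \to u$ locally uniformly for some continuous $u : M \to \R{}$.

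Finally I would set $\rho(g) := u(g(x_0))$ and verify all the required properties. Pointwise convergence at $g(x_0)$ forces $\rho_n(g) = u_n(g(x_0)) \to u(g(x_0)) = \rho(g)$ for every $g \in G$; passing to the limit in the relation $\rho_n(gh) = \rho_n(g) + \rho_n(h)$ gives $\rho \in \Hom(G,\R{})$, and passing to the limit in $u_n \circ g = u_n + \rho_n(g)$ (which is valid locally uniformly on both sides) gives $u \circ g = u + \rho(g)$, so $u \in \I_\rho$. Passing to the limit in the domination inequality $u_n(y) - u_n(x) \leq c(x,y) + C_{\rho_n}$ shows $u \in \HH_\rho(C_{G,min})$, whence $\rho$ is tame and $C_\rho \leq C_{G,min}$; the reverse inequality is the definition of $C_{G,min}$. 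The step requiring the most care is the compactness argument: one must make sure the modulus of continuity obtained from uniform boundedness of $c$ is uniform in $n$, which is why the boundedness of $C_{\rho_n}$ (a consequence of choosing a minimizing sequence together with the lower bound $\alpha[0]$) is crucial.
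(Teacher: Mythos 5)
Your overall strategy (extract a locally uniform limit along a minimizing sequence of equivariant solutions and read off $\rho$ from the limit) is exactly the paper's, and the tail of your argument (passing to the limit in the equivariance and domination relations) is fine. The one step that is genuinely wrong is the justification of equicontinuity. The estimate $|u_n(y)-u_n(x)|\leqslant A(\d(x,y))+C_{\rho_n}$ that comes from domination plus uniform boundedness is \emph{not} a modulus of continuity: the right-hand side does not tend to $0$ as $\d(x,y)\to 0$, since $A(0)+C_{\rho_n}\geqslant c(x,x)+C_{\rho_n}\geqslant 0$ with no reason to vanish. In this discrete setting a merely $C$-dominated function need not be continuous at all --- this is a central point of \cite{Za} --- so no argument built only on domination can yield equicontinuity.

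The fix, and it is what the paper actually does, is to use the regularizing effect of the Lax--Oleinik operator rather than the crude domination estimate. Your choice of $u_n$ as $\rho_n$-equivariant weak KAM solutions already does the work implicitly: since $u_n=\ttt u_n+C_{\rho_n}$, each $u_n$ lies (up to an additive constant) in $\ttt(\HH(C^*))$, where $C^*=\sup_n C_{\rho_n}<+\infty$. Because $\ttt v$ is locally a finite infimum of the equi--semi-concave functions $y\mapsto v(z)+c(z,y)$, the family $\ttt(\HH(C^*))$ is locally equi--semi-concave, hence locally equi-Lipschitz, hence locally equicontinuous. With this substitution your Arzel\`a--Ascoli/diagonal argument goes through and the rest of the proof is correct. (The paper streamlines this slightly by picking a \emph{decreasing} sequence $C_{\rho_n}$ and taking $u_n\in\ttt(\HH_{\rho_n}(C_{\rho_n}))\subset\ttt(\HH(C_{\rho_0}))$ directly, without invoking the full weak KAM theorem, but that is a stylistic difference, not a gap.)
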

\begin{proof}
 Of course, when $\Homt(G,\R{})$ is finite dimensional, this follows
 from the super-linearity of the  function $C$.
 
  For the general
 case, pick a decreasing sequence $C_{\rho_n}$ which converges to
 $C_{G,min}$. For each $n\in \mathbb{N}$, pick a function $u_n\in
 \ttt(\HH_{\rho_n}(C_{\rho_n}))$. The functions are locally equicontinous
 because they all belong to $\ttt(\HH(C_{\rho_0}))$. Substracting
 a constant from each $u_n$ and extracting a subsequence if necessary,
 we can assume that $u_n$ converges uniformly on each compact subset
 of $M$ to a function $u$. Since for $n\geqslant n_0$, $u_n$ is in the
 closed set $\HH(C_{\rho_{n_0}})$, we must have $u\in \HH(C_{\rho_{n_0}})$ for each $n_0$. Hence, $u\in \HH(C_{G,min})$. Since
 for $x\in M$ we have $\rho_n (g)=u_n(g(x))-u_n(x)$ we conclude that
 $\rho_n$ converges (pointwise) to a $\rho \in \Hom(G,\R{})$ and $u\in
 \I_{\rho}$. It follows that $C_\rho\leqslant C_{G,min}$ but
 the reverse inequality follows from the definition of $C_{G,min}$.
\end{proof}

\section{Application: Mather's $\alpha$ function on the cohomology}\label{section6}

In this final section, following Mather's ideas (\cite{Ma2}), we apply the preceding results to the case when the group of symmetries rise from a covering of $M$. Let us consider $M$ a smooth, finite dimensional, connected riemmanian manifold, $g_M$ its metric. Let $\MM$ be its  covering space verifying 
$$\pi_1 \left(\MM \right)=\ker(\mathfrak{H})$$
 where $\mathfrak{H}:\pi_1(M)\to H_1(M,\R{})$ is the Hurewicz homomorphism. We consider then a cost function $\cc : \MM \times \MM \to \R{}$ which verifies \ref{unif} and \ref{unifb}. Let us assume moreover that $\cc$ is invariant by the diagonal action of the group of deck transformations $\mathfrak{T}$. This means that if $T$ is a deck transformation, the following holds:
 $$\forall (\xx,\yy)\in M\times M, \  \cc(\xx,\yy)=\cc(T(\xx),T(\yy)).$$
 Let $p: \MM \to M$ be the cover, we may define a cost function $c:M\times M \to \R{}$ by
 $$\forall (x,y)\in M\times M, \ c(x,y)=\inf_{\substack{p(\xx)=x \\p(\yy)=y}}\cc(\xx,\yy).$$
 \begin{pr}\label{cocontinuity}
 The cost function $c$ is continuous, uniformly super-linear and uniformly bounded in the sense of \ref{unif} and \ref{unifb}. Moreover, if $(x,y)\in M\times M$ then for each $\xx\in \MM$ verifying $p(\xx)=x$ there is a $\yy\in \MM$ such that $p(\yy)=y$ and $c(x,y)=\cc(\xx,\yy)$.
 \end{pr}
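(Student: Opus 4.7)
The plan is to use the invariance of $\cc$ to reduce the infimum defining $c$ to one over a single orbit of the deck group $\mathfrak{T}$, then to exploit the super-linearity of $\cc$ together with the proper discontinuity of the action to show that this infimum is attained; the remaining three properties of $c$ will follow rather quickly.

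First, fix $(x,y)\in M\times M$, a lift $\xx\in p^{-1}(x)$, and any $\yy_0\in p^{-1}(y)$. Since $\pi_1(\MM)=\ker(\mathfrak{H})$ is normal in $\pi_1(M)$, the cover is regular and $p^{-1}(y)=\mathfrak{T}\yy_0$. The invariance $\cc(T\xx,T\yy)=\cc(\xx,\yy)$ then gives
$$c(x,y) = \inf_{T\in \mathfrak{T}} \cc(\xx, T\yy_0),$$
and the change of variable $T\mapsto S^{-1}T$ shows that this value is independent of the choice of $\xx\in p^{-1}(x)$. The problem thus reduces to a discrete minimization along the orbit of $\yy_0$.

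The main obstacle is the attainment assertion. Let $(T_n)_n$ be a minimizing sequence, so that $\cc(\xx,T_n\yy_0)\leqslant L$ for some constant $L$. Uniform super-linearity of $\cc$ (condition \ref{unif}) applied with $k=1$ yields
$$\dd(\xx,T_n\yy_0) \leqslant \cc(\xx,T_n\yy_0) + C(1) \leqslant L + C(1).$$
Since $M$ is complete and $p$ is a Riemannian covering, $\MM$ is complete as well, so closed balls in $\MM$ are compact by Hopf--Rinow. The orbit $\mathfrak{T}\yy_0 = p^{-1}(y)$ is a closed discrete subset of $\MM$, hence meets any such ball in a finite set. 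Thus $(T_n\yy_0)_n$ takes only finitely many values and the infimum is attained at some $\yy_*\in p^{-1}(y)$.

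The remaining properties follow without real difficulty. Uniform super-linearity of $c$: since $p$ is a local isometry, $\d(x,y)\leqslant \dd(\xx,\yy)$ for any lifts, and with $\yy_*$ the optimizer just constructed, $c(x,y)=\cc(\xx,\yy_*)\geqslant k\,\dd(\xx,\yy_*)-C(k)\geqslant k\,\d(x,y)-C(k)$. Uniform boundedness: if $\d(x,y)\leqslant R$, take a minimizing geodesic in $M$ (by Hopf--Rinow) and lift it to $\MM$ starting at $\xx$ to obtain $\yy\in p^{-1}(y)$ with $\dd(\xx,\yy)\leqslant R$, whence $c(x,y)\leqslant \cc(\xx,\yy)\leqslant A(R)$. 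Continuity at $(x,y)$: given $(x_n,y_n)\to(x,y)$, lift $x_n$ to $\xx_n\to \xx$ using that $p$ is a local homeomorphism; for the $\limsup$ inequality, lift $y_n$ close to an optimizer $\yy_*$ for $c(x,y)$ and use continuity of $\cc$; for the $\liminf$, take optimizers $\yy'_n$ of $c(x_n,y_n)$, bound $\dd(\xx_n,\yy'_n)$ by super-linearity, extract a subsequential limit $\yy'\in p^{-1}(y)$ by compactness of closed balls, and apply continuity of $\cc$ once more.
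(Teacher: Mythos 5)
Your proof is correct, and it proves the same statement by essentially the same underlying facts (invariance, super-linearity, completeness, properness of the deck action), but the organization differs from the paper's in two places. For the attainment, the paper first reduces the infimum to a compact set $\KK\times\KK_1$ on which $\cc$ is uniformly continuous and then remarks the infimum is in fact achieved; your version goes more directly: after reducing to a single orbit $\inf_{T\in\mathfrak{T}}\cc(\xx,T\yy_0)$, you use super-linearity to confine the relevant $T\yy_0$ to a ball, and then the closed discreteness of $p^{-1}(y)$ in a compact ball to make the infimum a finite minimum. This is a cleaner and more self-contained way to get the attainment. For continuity, the paper presents $c$ locally as a finite infimum of equi-uniformly continuous functions on $K\times K_1$ (which yields local uniform continuity), whereas you run a sequential $\limsup$/$\liminf$ argument around the optimizer $\yy_*$; this is a slightly weaker but perfectly adequate conclusion, and arguably easier to make fully rigorous than the paper's ``finite infimum over sheets'' picture. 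The proofs of super-linearity and of uniform boundedness (lifting a minimizing geodesic to get $\dd(\xx,\yy)\leqslant R$) coincide with the paper's.

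One small point worth tightening if you write this up: in the $\liminf$ step you should note explicitly that the optimizing costs $\cc(\xx_n,\yy'_n)=c(x_n,y_n)$ are bounded above for $n$ large (for instance by comparison with the competitor $\yy_n$ near $\yy_*$, or simply by the already-proved uniform bound on $c$), so that super-linearity indeed pins $\yy'_n$ inside a fixed compact ball before you extract a convergent subsequence.
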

 
 \begin{proof}
 The proof of the continuity of $c$ is much similar to the proofs of  regularity of the Lax-Oleinik semi-groups (see \cite{Za}) therefore we will sketch it briefly. Let us consider $K\subset M$ a compact subset of $M$ and $\KK \subset \MM$  compact verifying $p(\KK)=K$. Since $\cc$ is invariant by the diagonal action of the group of deck transformations $\mathfrak{T}$ we have the following:
 $$\forall (x,y)\in K\times M, \ c(x,y)=\inf_{\substack{\xx\in \KK, p(\xx)=x \\p(\yy)=y}}\cc(\xx,\yy).$$
 Let us now consider another compact set $K_1\subset M$ . It may be proved, using the super-linearity of $\cc$, that there exists a compact set $\KK_1$ such that $K_1\subset p(\KK_1)$ and 
 $$\forall (x,y)\in K\times K_1, \ c(x,y)=\inf_{\substack{\xx\in \KK, p(\xx)=x \\ \yy\in \KK_1,p(\yy)=y}}\cc(\xx,\yy).$$
 Since $\KK\times \KK_1$ is  compact, the function $\cc$ restricted to $\KK\times \KK_1$ is uniformly continuous and the function $c$ restricted to $K\times K_1$ is a finite infimum (in fact this infimum is achieved) of uniformly continuous functions, therefore it is continuous. Note that since we managed to restrict ourselves to compact sets, we may apply the previous result to $K=\{x\}$ and $\KK =\{\xx\}$ to obtain the last point of the proposition. 
 
 Let $\d(.,.)$ be the riemannian distance on $M$ and $\dd(.,.)$ the induced distance on $\MM$. The following is verified:
 $$\forall (\xx,\yy)\in \MM\times \MM, \ \d(p(\xx),p(\yy))\leqslant \dd(\xx,\yy).$$
 Since $\cc$ is uniformly super-linear we have that for every $k\geqslant 0$,
   there exists $C(k)\in \R{}$ such that 
   $$\forall (\xx,\yy)\in \MM \times \MM,\ 
   \cc(\xx,\yy)\geqslant k\dd(\xx,\yy)-C(k).$$
   Let us pick $(x_0,y_0)\in M\times M$ and $(\xx_0,\yy_0)$ such that $p(\xx_0)=x_0$, $p(\yy_0)=y_0$ and $c(x_0,y_0)=\cc(\xx_0,\yy_0)$. The following holds:
   $$c(x_0,y_0)=\cc(\xx_0,\yy_0)\geqslant k\dd(\xx_0,\yy_0)-C(k)\geqslant k\d (x_0,y_0) -C(k),$$
   which proves the super-linearity of $c$.
   
   Similarly, for every $R\in \R{}$, there
  exists $A(R)\in \R{}$ such that 
  $$\dd(\xx,\yy)\leqslant R \Rightarrow
  \cc(\xx,\yy)\leqslant A(R).$$
   If $\d(x_0,y_0)\leqslant R$, we can find $(\xx_0,\yy_0)$ such that $p(\xx_0)=x_0$, $p(\yy_0)=y_0$ and $\d(x_0,y_0)=\dd(\xx_0,\yy_0)\leqslant R$. Therefore, using the definition of $c$ we obtain 
$$c(x_0,y_0)\leqslant \cc(\xx_0,\yy_0)\leqslant A(R)$$ which proves that $c$ is uniformly bounded in the sense of \ref{unifb}.
 \end{proof}
 
 Let us now consider a bounded (with respect to the metric $g_M$) closed $1$-form $\om$ on $M$. This form lifts to an exact form $\tilde{\om}=\d\tilde{f}$ on $\MM$. Moreover, the function $\tilde{f}$ is globally Lipschitz hence has linear growth. We may therefore define a cost function $\cc_{\tilde{\om}}$ by
 $$ \forall (\xx,\yy)\in \MM \times \MM,\ 
   \cc_{\tilde{\om}}(\xx,\yy)=\cc(\xx,\yy)-\tilde{f}(\yy)+\tilde{f}(\xx).$$
   Note that this cost function is still super-linear and uniformly bounded and that it does not depend on the choice of the primitive $\tilde{f}$. Let us fix a point $\xx\in \MM$ and define now the morphism $\rho_{\tilde{\om}}:\mathfrak{T}\to \R{}$ by
   $$\forall T\in \mathfrak{T},\  \rho_{\tilde{\om}}(T)=\tilde{f}(T(x))-\tilde{f}(x).$$
   It is straightforward to check that $ \rho_{\tilde{\om}}$ is indeed a morphism and that it is independent  from $x$ by Stoke's formula. Finally, the map $\om \to \rho_{\tilde{\om}}$ is linear in $\om$ and vanishes if and only if  $\om$ is exact. Therefore it induces an injective morphism from the $g_M$-bounded cohomology of order $1$, $H^1_{g_M,b}(M,\R{})$, to $\Hom (\mathfrak{T},\R{})$. We still denote by $\rho$ this morphism. We now have the following lemma:
   \begin{lm}\label{inclusion}
  The following inclusion holds:
   $$\mathrm{Im}(\rho)\subset \Homt(\mathfrak{T},\R{}).$$ 
   \end{lm}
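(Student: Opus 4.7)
My plan is to exhibit an explicit element of $\HH_{\rho_{\tilde{\om}}}(C)$ for a finite $C$, which immediately gives $C_{\rho_{\tilde{\om}}}<+\infty$ and hence $\rho_{\tilde{\om}}\in \Homt(\mathfrak{T},\R{})$. The natural candidate is the lifted primitive $\tilde{f}$ itself, so that the whole lemma reduces to checking two properties of $\tilde{f}$: equivariance and $\cc$-domination.

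The first step is to verify that $\tilde{f}$ is $\rho_{\tilde{\om}}$-equivariant. This should follow from the deck-invariance of the lifted form. Since $\tilde{\om}$ is pulled back from a form on $M$, one has $T^*\tilde{\om}=\tilde{\om}$ for every $T\in \mathfrak{T}$; hence $\d(\tilde{f}\circ T-\tilde{f})=T^*\tilde{\om}-\tilde{\om}=0$ on the connected manifold $\MM$, so $\tilde{f}\circ T-\tilde{f}$ is a constant. Evaluating at any fixed base point identifies this constant with $\rho_{\tilde{\om}}(T)$, which is exactly what is required.

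The second step is to check that $\tilde{f}\<\cc+C$ for some finite $C$ depending only on $\om$. Since $\om$ is bounded with respect to $g_M$, say of norm at most $K$, the primitive $\tilde{f}$ is globally $K$-Lipschitz on $(\MM,\dd)$, as already noted in the excerpt. Applying the uniform super-linearity \ref{unif} of $\cc$ to this slope $K$ provides a constant $C(K)$ with $K\dd(\xx,\yy)\leqslant \cc(\xx,\yy)+C(K)$ for every $(\xx,\yy)\in \MM\times \MM$; combining this with the Lipschitz estimate $\tilde{f}(\yy)-\tilde{f}(\xx)\leqslant K\dd(\xx,\yy)$ yields $\tilde{f}(\yy)-\tilde{f}(\xx)\leqslant \cc(\xx,\yy)+C(K)$, i.e.\ $\tilde{f}\in \HH(C(K))$.

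Combining the two steps, $\tilde{f}\in \HH_{\rho_{\tilde{\om}}}(C(K))\neq \varnothing$, whence $\rho_{\tilde{\om}}$ is tame. There is no real obstacle: the argument only uses the boundedness of $\om$ (to Lipschitz-control $\tilde{f}$) together with hypothesis \ref{unif} on $\cc$ (to absorb the Lipschitz slope into the cost up to an additive constant). No compactness or deeper weak KAM machinery is needed at this stage.
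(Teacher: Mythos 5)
Your argument is correct, and it takes a genuinely different and more elementary route than the paper. The paper proves the lemma by invoking the invariant weak KAM theorem for the cost $\cc_{\tilde{\om}}$: it first notes that $\cc_{\tilde{\om}}$ still verifies the super-linearity and boundedness hypotheses (which, incidentally, already uses the Lipschitz estimate on $\tilde{f}$ that you use), then produces a $\mathfrak{T}$-invariant weak KAM solution $\tilde{u}$ for $\cc_{\tilde{\om}}$, and finally observes that $\tilde{u}+\tilde{f}$ is $\rho_{\tilde{\om}}$-equivariant and dominated for $\cc$, hence lies in some $\HH_{\rho_{\tilde{\om}}}(C)$. You short-circuit this: instead of manufacturing a subsolution via the fixed-point theorem, you check directly that $\tilde{f}$ itself is already a $\rho_{\tilde{\om}}$-equivariant element of $\HH(C(K))$, using only (a) deck-invariance of $\tilde{\om}$ to get equivariance of $\tilde{f}$, and (b) $K$-Lipschitzness of $\tilde{f}$ combined with the super-linearity bound $K\dd(\xx,\yy)\leqslant \cc(\xx,\yy)+C(K)$ to get domination. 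This is strictly less machinery (no Schauder--Tykhonoff, no compactness, no Lax--Oleinik fixed point) and the chain of inequalities $\tilde{f}(\yy)-\tilde{f}(\xx)\leqslant K\dd(\xx,\yy)\leqslant \cc(\xx,\yy)+C(K)$ is airtight. The only thing the paper's approach buys beyond the lemma is that it actually exhibits an equivariant weak KAM \emph{solution}, not merely a subsolution; but for establishing tameness, i.e.\ $C_{\rho_{\tilde{\om}}}<+\infty$, your direct construction is the cleaner argument.
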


\begin{proof}
It follows from the discussion above that, if $[\om]\in H^1_{g_M,b}(M,\R{})$ and $\om$ is a bounded $1$-form whose cohomology class is $[\om]$ then $\cc_{\tilde{\om}}$ verifies \ref{unif} and \ref{unifb}. Therefore, by the invariant weak KAM theorem (\ref{invkam}) applied to the cost $\cc_{\tilde{\om}}$  there exist a function $\tilde{u}$ and a constant $C$ such that $\tilde{u}=\tTT \tilde{u}+C$ and $\tilde{u}\in \I$. This means exactly that $\tilde{u}+\tilde{f}=\tT (\tilde{u}+\tilde{f})+C$ and $\tilde{u}+\tilde{f}\in \I_{\rho_{\tilde{\om}}}$.
\end{proof}

We now introduce Mather's alpha function:
\begin{df}\rm
Let $[\om]\in H^1_{g_M,b}(M,\R{})$ be the cohomology class of a a bounded $1$-form $\om$, we define the constant $\alpha[\om]\in \mathbb{R}$ by the relation $\alpha[\om]=C_{\rho_{\tilde{\om}}}$. In other words, the value $\alpha[\om]$ is the invariant critical value of the cost $\cc_{\tilde{\om}}$.
\end{df}

In an analogous way to what we already did, if $\om$ is a closed bounded $1$-form on $M$, we may define a cost function $c_\om$ by
 $$\forall (x,y)\in M\times M, \ c_\om(x,y)=\inf_{\substack{p(\xx)=x \\p(\yy)=y}}\cc_{\tilde{\om}}(\xx,\yy).$$
 The constant $\alpha[\om]$ is also the critical value of the cost $c_\om$. Moreover, this constant depends only on the cohomology class $[\om]$ of the form $\om$. As a matter of fact, as in the proof of \ref{inclusion}, if $\om=\d f$ is exact, then $u:M\to \R{}$ is a critical subsolution for $c_\om$ if and only if $u+f$ is a critical subsolution for $c$. This also justifies a posteriori the notation $\alpha[\om]$.
 
 From now on, we will assume, without loss of generality, that all the forms considered are smooth. The end of this paper will be devoted to checking that it is possible to adapt the machinery of sections \ref{known}, \ref{2} and \ref{existence} to this cohomological setting.

\begin{pr}\label{verification}
Assume the cost $\cc :\MM \times \MM \to \R{}$ is locally semi-concave then the cost $c:M\times M\to \R{}$ is also locally semi-concave. Assume moreover that $\cc$ verifies the left and right twist conditions, then so does $c$. Finally, in the latter case, if $\om$ is a smooth  closed $1$-form on $M$, the costs $\cc_{\tilde{\om}}$ and $c_{\om}$ are locally semi-concave and verify the left and right twist conditions.
\end{pr}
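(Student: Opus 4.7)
My plan is to reduce each claim to the corresponding property of $\cc$, exploiting that $p:\MM\to M$ is a local diffeomorphism together with the diagonal $\mathfrak{T}$-invariance of $\cc$. For local semi-concavity, I fix $(x_0,y_0)\in M\times M$ and revisit the argument of Proposition~\ref{cocontinuity}: there are compact neighborhoods $K\ni x_0$, $K_1\ni y_0$ and compact lifts $\KK,\KK_1\subset\MM$ such that $c(x,y)=\inf\{\cc(\xx,\yy):\xx\in\KK\cap p^{-1}(x),\,\yy\in\KK_1\cap p^{-1}(y)\}$ on $K\times K_1$. Shrinking $K,K_1$ if necessary so that $p$ restricts to a diffeomorphism on each connected component of $p^{-1}(K)\cap\KK$ and of $p^{-1}(K_1)\cap\KK_1$, I obtain finitely many smooth local sections $\sigma_i:K\to\MM$ and $\tau_j:K_1\to\MM$ of $p$ and rewrite $c(x,y)=\min_{i,j}\cc(\sigma_i(x),\tau_j(y))$ on $K\times K_1$. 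Each summand is locally semi-concave as the composition of a locally semi-concave function with smooth maps, and a finite infimum of equi-locally semi-concave functions is locally semi-concave, which yields the first claim.

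For the twist conditions I verify only the right one, the left being symmetric. Assume $\partial c/\partial x(x,y_1)$ and $\partial c/\partial x(x,y_2)$ both exist and are equal. By Proposition~\ref{cocontinuity} I pick lifts $(\xx_i,\yy_i)$ with $c(x,y_i)=\cc(\xx_i,\yy_i)$, and, using the $\mathfrak{T}$-invariance of $\cc$ together with the unique deck transformation sending $\xx_2$ to $\xx_1$, I may assume $\xx_1=\xx_2=:\xx$. Taking a smooth local section $s$ of $p$ with $s(x)=\xx$, the inequality $c(x',y_i)\leqslant\cc(s(x'),\yy_i)$, an equality at $x'=x$, forces every super-gradient of the locally semi-concave function $x'\mapsto\cc(s(x'),\yy_i)$ at $x$ to be a super-gradient of the differentiable function $c(\cdot,y_i)$ at $x$, hence equal to $\partial c/\partial x(x,y_i)$. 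Since the super-differential of a locally semi-concave function is always non-empty, this reduces it to the singleton $\{\partial c/\partial x(x,y_i)\}$ and shows that $\cc(s(\cdot),\yy_i)$ is differentiable at $x$; reading the differential through the local diffeomorphism $p$, the partial derivative $\partial\cc/\partial\xx(\xx,\yy_i)$ exists and corresponds to $\partial c/\partial x(x,y_i)$ via the isomorphism $\d p_{\xx}$. The hypothesis then yields $\partial\cc/\partial\xx(\xx,\yy_1)=\partial\cc/\partial\xx(\xx,\yy_2)$, and the right twist of $\cc$ gives $\yy_1=\yy_2$, so $y_1=y_2$.

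For the final part, write $\tilde\om=\d\tilde f$ on $\MM$ with $\tilde f$ smooth. The cost $\cc_{\tilde\om}(\xx,\yy)=\cc(\xx,\yy)-\tilde f(\yy)+\tilde f(\xx)$ is $\mathfrak{T}$-invariant because the constants $\tilde f\circ T-\tilde f=\rho_{\tilde\om}(T)$ appear with opposite signs in the two $\tilde f$-terms and cancel; it is locally semi-concave since the perturbation is smooth; and its partial derivative in $\xx$ (resp.\ in $\yy$) differs from that of $\cc$ only by a term depending on $\xx$ (resp.\ $\yy$) alone, so both twist conditions are inherited from those for $\cc$. Applying the first two parts of the proposition to $\cc_{\tilde\om}$ in place of $\cc$ then produces the desired properties of $c_\om$. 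The step I expect to demand the most care is the twist argument above: aligning the two minimizing lifts over $x$ by a deck transformation and then recovering a partial derivative of $\cc$ at the common lift from the partial derivative of $c$ is the point at which the cover, the group action and the twist hypothesis interact essentially.
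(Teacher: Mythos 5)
Your proof is correct and follows essentially the same route as the paper: reduce local semi-concavity to a local finite-infimum representation via smooth sections of $p$, recover $\partial\cc/\partial\xx$ at a minimizing lift from $\partial c/\partial x$ and invoke the twist of $\cc$, and treat $\cc_{\tilde\om}$ as a smooth perturbation that shifts the skew Legendre transform by a term depending on one variable only. The one cosmetic difference is that the paper fixes a single lift $\xx_0$ of $x_0$ once and for all and exploits the formula $c(x_0,y)=\inf_{\yy\in p^{-1}\{y\}}\cc(\xx_0,\yy)$, valid for all $y$ with that fixed $\xx_0$, so it never needs your step of realigning two minimizing lifts by a deck transformation; your alignment argument is sound but slightly more roundabout, while your explicit remark that $\cc_{\tilde\om}$ remains $\mathfrak{T}$-invariant (needed to apply the first two parts to it) is a useful detail the paper leaves implicit.
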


\begin{proof}
As in the proof of \ref{cocontinuity}, the function $c$ is locally semi-concave because it is locally a finite infimum of equi-semi-concave functions (everything can locally be reduced to taking infimums over relatively compact sets).

For the second part of the proposition, let us prove only the left twist condition. Consider  a point $x_0\in M$ and a lift $\xx_0\in \MM$ such that $p(\xx_0)=x_0$. By the last part of \ref{cocontinuity}, the following holds:
$$\forall y\in M,\ c(x_0,y)=\inf_{\yy\in p^{-1}\{y\}} \cc(\xx_0,\yy).$$
Assume now that for some $y\in M$ the partial derivative $\partial c/\partial x (x_0,y)$ exists and consider $\yy\in \MM$ such that $c(x_0,y)=\cc(\xx_0,\yy)$. Since $\cc$ is locally semi-concave, it follows that the partial derivative $\partial \cc/\partial \xx (\xx_0,\yy)$ also exists and verifies (identifying the cotangent fibers $T_{(\xx_0,\yy)}\MM\times \MM$ and $T_{(x_0,y)}M\times M$ via the cover $p$ which is a local diffeomorphism)
\begin{equation}\label{twist}
\frac{\partial \cc}{\partial \xx} (\xx_0,\yy)=\frac{\partial c}{\partial x} (x_0,y).
\end{equation}
Now, since $\cc$ verifies the left twist condition, it follows that the map 
$$\yy\mapsto\Lambda_{\cc}^l(\xx_0,\yy)=\left(\xx_0,-\frac{\partial \cc}{\partial \xx} (\xx_0,\yy)\right)$$
 is injective on its domain of definition, and it follows immediately from \ref{twist} that the left Legendre transform
 $$y\mapsto\Lambda_c^l(x_0,y)=\left(x_0,-\frac{\partial c}{\partial x} (x_0,y)\right)$$
 is also injective on its domain of definition, which means that $c$ verifies the left twist condition. 
 
 The last part of the proposition is now straightforward. Indeed, if $\om$ is smooth, then so will be the function $\tilde{f}$, and the function 
 $$\cc_{\tilde{\om}}:(\xx,\yy)\mapsto \cc(\xx,\yy)-\tilde{f}(\yy)+\tilde{f}(\xx)$$
  remains locally-semi-concave. Moreover the left Legendre transform of $\cc_{\tilde{\om}}$ is defined if and only if the left Legendre transform of $\cc$ is defined and it is given by the formula 
 $$\Lambda_{\cc_{\tilde{\om}}}^l(\xx,\yy)=\left(\xx,-\frac{\partial \cc_{\tilde{\om}}}{\partial x} (\xx,\yy)\right)=\left(\xx,-\frac{\partial \cc}{\partial x} (\xx,\yy)-\d_{\xx} \tilde{f}\right)$$
 which clearly gives that $\cc$ verifies the left twist condition if and only if $\cc_{\tilde{\om}}$ does.
\end{proof}

Thanks to \ref{verification}, it is possible to associate to each cohomology class $[\om]\in H^1_{g_M,b}(M,\R{})$ Aubry sets $\A_{[\om]}$, $\widehat{\A}_{[\om]}$ and $\AA_{[\om]}$ by using the already introduced notions to the cost $c_{\om}$. Notice that these sets depend only on the cohomology class for, as in the time-continuous case (see \cite{Ma2}), minimizers with fixed endpoints are unchanged by the addition of an exact form to the cost $c$. Theorem \ref{strici} then applies, proving the existence of $C^{1,1}$ strict subsolutions associated to each cohomology class.

\bibliography{CC-09-08}
\bibliographystyle{alpha}

\end{document}